\newtheorem{thm}{Theorem}[section]
\newtheorem{prop}[thm]{Proposition}
\newtheorem{corollary}[thm]{Corollary}
\newtheorem{problem}[thm]{Problem}
\newtheorem{remark}[thm]{Remark}
\newtheorem{lemma}[thm]{Lemma}
\newtheorem{conj}[thm]{Conjecture}
\newcommand{\D}{\displaystyle}
\newcommand{\ip}[2]{\ensuremath{\langle #1 , #2 \rangle}}
\newcommand{\tp}{\texttt{p}}
\newcommand{\p}{\partial}
\renewcommand{\span}{\operatorname{span}}
\newcommand{\eucl}{\operatorname{eucl}}
\newcommand{\pb}{\operatorname{par}}
\newcommand{\ccs}{\operatorname{\mathcal{C}^2_{\textmd{sub}}}}
\theoremstyle{definition}
\newtheorem{definition}{Definition}
\theoremstyle{remark}
\numberwithin{equation}{section}
\begin{document}
\begin{abstract}By employing a Carnot parabolic maximum principle, we show existence-uniqueness of viscosity solutions to a class of equations modeled on the parabolic infinite Laplace equation in Carnot groups. We show stability of solutions within the class and examine the limit as $t$ goes to infinity. 
\end{abstract}
\title{The Parabolic Infinite-Laplace Equation in Carnot groups}
\author{Thomas Bieske}
\thanks{The first author was partially supported by a University of South Florida Proposal Enhancement Grant.}
\author{Erin Martin}
\address{Department of Mathematics and Statistics\\
University of South Florida\\ 
4202 E. Fowler Ave. CMC342\\
Tampa, FL 33620, USA}
\email{tbieske@usf.edu}
\address{Department of Mathematics and Physics\\
Westminster College\\
501 Westminster Ave\\ 
Fulton, MO 65251, USA}
\email{Erin.Martin@westminster-mo.edu}
\subjclass[2010]
{Primary 53C17, 35K65, 35D40; Secondary 35H20, 22E25, 17B70}
\keywords{parabolic p-Laplace equation, viscosity solution, Carnot groups}
\date{January 15, 2015}
\maketitle
\section{Motivation}
In Carnot groups, the following theorem has been established. 
\begin{thm}\cite{B:HG, W:W, B:MP} \label{subelliptic}
    Let $\Omega$ be a bounded domain in a Carnot group and let 
    $v:\partial \Omega \to \mathbb{R}$ be a continuous 
    function.
    Then the Dirichlet problem 
    \begin{eqnarray*}
\left\{ \begin{array}{cc}
\Delta_{\infty}u  =  0  & \textmd{in\ \ \ } \Omega \\
u = v & \textmd{on\ \ \ } \partial \Omega
\end{array} \right.
\end{eqnarray*}
has a unique viscosity solution $u_\infty$. 
\end{thm}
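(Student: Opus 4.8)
The plan is to prove the two assertions separately, with a comparison (maximum) principle doing the heavy lifting for uniqueness and serving as the engine for existence via Perron's method. First I would fix the precise viscosity formulation: an upper semicontinuous $u$ is a subsolution if, whenever a smooth $\phi$ touches $u$ from above at an interior point $x_0$, one has $\Delta_\infty\phi(x_0)\ge 0$, where $\Delta_\infty\phi=\ip{(\no^2\phi)^{\star}\no\phi}{\no\phi}$ is built from the horizontal gradient $\no$ and the symmetrized horizontal Hessian $(\no^2\phi)^{\star}$; supersolutions reverse the inequality at touching test functions from below, and a solution is both. Because the operator is degenerate elliptic and independent of $u$ and of the point, the structural input I need is: if $u$ is a subsolution, $w$ a supersolution, and $u\le w$ on $\partial\om$, then $u\le w$ in $\om$.

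For the comparison principle I would run the doubling-of-variables scheme. The clean contradiction for an operator with right-hand side $0$ is not immediate, so following Jensen I would first reduce to the case of a \emph{strict} supersolution, replacing $w$ by a perturbation satisfying $\Delta_\infty w\le-\delta<0$ in the viscosity sense. Assuming $\sup_{\overline{\om}}(u-w)>0$ is attained at an interior point, I double variables and subtract a penalization built from a homogeneous gauge $d$ adapted to the group, maximizing $u(x)-w(y)-\tfrac{1}{4\epsilon}d(x,y)^4$ at some $(x_\epsilon,y_\epsilon)$. The Carnot-group theorem on sums — the sub-Riemannian maximum principle underlying Theorem \ref{subelliptic} — then produces symmetric horizontal Hessians $X\preceq Y$ in the closures of the second-order horizontal jets of $u$ at $x_\epsilon$ and of $w$ at $y_\epsilon$, sharing a common horizontal covector $\eta_\epsilon=\no_x\big(\tfrac14 d^4\big)$. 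The subsolution inequality gives $\ip{X\eta_\epsilon}{\eta_\epsilon}\ge0$, the strict supersolution inequality gives $\ip{Y\eta_\epsilon}{\eta_\epsilon}\le-\delta$, while $X\preceq Y$ forces $\ip{X\eta_\epsilon}{\eta_\epsilon}\le\ip{Y\eta_\epsilon}{\eta_\epsilon}$, so $0\le-\delta<0$, the desired contradiction.

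Granting comparison, existence follows from Perron's method: set $u_\infty(x)=\sup\{\,s(x): s\ \text{a subsolution},\ s\le v\ \text{on}\ \partial\om\,\}$, verify by the standard bump-construction that $u_\infty$ is simultaneously a sub- and a supersolution, and use barriers at each boundary point — a multiple of the gauge distance to that point, whose horizontal jet is computable — to force $u_\infty=v$ there and hence continuity up to $\partial\om$; uniqueness is then immediate from comparison applied in both directions. The main obstacle I anticipate lives entirely in the comparison step and is twofold. First, $\Delta_\infty$ is genuinely degenerate where $\no u$ vanishes, so I must rule out $\eta_\epsilon\to0$ (Jensen's difficulty), typically by the strict-supersolution reduction above together with sup-/inf-convolution regularization, or by passing to the comparison-with-cones characterization. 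Second, because the horizontal frame does not commute, the theorem on sums carries lower-order correction terms arising from the Lie brackets, and controlling these so that the ordering $X\preceq Y$ survives cleanly in the horizontal slots is the delicate point that the sub-Riemannian maximum principle is precisely designed to handle.
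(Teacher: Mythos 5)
This theorem is quoted background: the paper gives no proof of it, citing \cite{B:HG, W:W, B:MP}, so your proposal must be measured against those proofs and against the paper's parallel parabolic machinery (Lemma \ref{cpmp}, Theorem \ref{pinf}). Your architecture --- doubling of variables plus a Carnot theorem on sums for comparison, then Perron's method with gauge barriers --- is indeed the strategy of that literature and of this paper's Sections 4--5. The gap sits at the crux of the comparison step: the ``reduction to a strict supersolution'' you invoke does not exist in any elementary form for the \emph{elliptic} operator. In the parabolic problem strictness is free from the time term (replacing $u$ by $u-\varepsilon/(T-t)$ yields $u_t-\Delta^h_\infty u\leq -\varepsilon/T^2$, exactly as in Theorem \ref{pinf}); for $\Delta_\infty u=0$ there is nothing to perturb: the operator has no zeroth-order or time term, and composing with an increasing concave $f$ gives $\Delta_\infty f(u)=(f')^3\Delta_\infty u+f''(f')^2\|\nabla_0 u\|^4$, which is strictly negative only where $\nabla_0 u\neq 0$ --- precisely not at the degenerate points causing the trouble. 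Consequently, when the doubling maximum occurs at $p_\tau=q_\tau$, your common covector is $\eta_\tau=0$, both operator values are $0$, and the inequalities $0\geq 0$, $0\leq 0$ yield no contradiction. This is Jensen's difficulty, and resolving it is the actual content of the cited proofs: in \cite{B:HG} (following Jensen) one passes to the auxiliary obstacle equations $\min\{\|\nabla_0 u\|-\epsilon,\;\Delta_\infty u\}=0$ and $\max\{\epsilon-\|\nabla_0 u\|,\;\Delta_\infty u\}=0$, satisfied by sup-/inf-convolutions, whose structure forces the gradient away from zero at touching points; comparison is proved for these and then $\epsilon\to 0$. Your parenthetical nod to ``sup-/inf-convolution regularization, or \ldots comparison-with-cones'' concedes the point but supplies none of it; as written, the argument assumes strictness in order to rule out exactly the case strictness was supposed to handle.

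A second, quantitative error: the Carnot theorem on sums does not give $X\preceq Y$ in the horizontal slots, even granting the delicacy you flag. Because the horizontal frame must be twisted (Lemma \ref{twist}), the conclusion carries bracket-generated error terms, cf.\ Lemma \ref{cpmp}(C):
$$\ip{\mathcal{X}_\tau\xi}{\xi}-\ip{\mathcal{Y}_\tau\xi}{\xi}\;\lesssim\;\tau\|\mathcal{W}\|^2\|\xi\|^2,$$
and the right-hand side does not vanish for fixed $\tau$. The contradiction must therefore be an exponent count: test with $\xi=\Upsilon_\tau$, use $\|\Upsilon_\tau\|\sim\varphi_\tau^{(m-1)/m}$ and $\|\mathcal{W}\|^2\lesssim\varphi_\tau^{(2m-4)/m}$ (Corollary \ref{cpmpcor}), and take the penalty exponent $m$ large. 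With your quartic penalty ($m=4$) and $h=3$, the resulting bound is $\tau^3\varphi_\tau^{5/2}=(\tau\varphi_\tau)^{5/2}\,\tau^{1/2}$, which need not tend to zero even though $\tau\varphi_\tau\to 0$; one needs $m\geq h+3=6$, exactly the paper's choice $H\geq h+3$ in Theorem \ref{pinf}. So the quartic gauge penalization, standard in the Euclidean setting, is quantitatively insufficient in a Carnot group, and the ``clean ordering'' you attribute to the sub-Riemannian maximum principle is not what that principle delivers.
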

Our goal is to prove a parabolic version of Theorem \ref{subelliptic} for a class of equations (defined in the next section), namely 
\begin{conj}
Let $\Omega$ be a bounded domain in a Carnot group and let $T>0$.  Let $\psi\in C(\overline{\Omega})$ and let $g \in C(\Omega \times [0,T))$ Then the Cauchy-Dirichlet problem
\begin{equation}\label{maincon}
\left\{ \begin{array}{cl}
u_t - \Delta^h_{\infty}u =0& \hspace{10pt} in \hspace{5pt} \Omega \times (0,T),\\ 
u(x,0) =\psi(x) & \hspace{10pt} on \hspace{5pt} \overline{\Omega} \\
u(x,t)=g(x,t) & \hspace{10pt} on \hspace{5pt} \partial \Omega \times (0,T)
\end{array}\right.
\end{equation}
has a unique viscosity solution $u$. 
\end{conj}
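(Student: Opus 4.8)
The plan is to follow the standard Perron-with-comparison architecture for viscosity solutions, but to carry it out intrinsically on the Carnot group so that the sub-Riemannian geometry and the degeneracy of $\Delta^h_\infty$ are respected throughout. The two pillars are a parabolic comparison principle valid for the entire class of admissible equations, and a Perron construction anchored by explicit barriers built from the homogeneous gauge. Uniqueness will fall out of the first pillar and existence out of the second.

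First I would prove the comparison principle: if $u$ is an upper-semicontinuous subsolution and $w$ a lower-semicontinuous supersolution of $u_t-\Delta^h_\infty u=0$ on $\Omega\times(0,T)$ with $u\le w$ on the parabolic boundary $\partial_p=(\overline{\Omega}\times\{0\})\cup(\partial\Omega\times[0,T))$, then $u\le w$ throughout. The engine is the doubling-of-variables technique: assuming $\sup(u-w)>0$ is attained at an interior point, I would maximize $u(x,t)-w(y,s)-\frac{1}{4\varepsilon}d(x,y)^4-\frac{1}{2\sigma}(t-s)^2-\frac{\eta}{T-t}$, where the last term keeps the supremum away from $t=T$, and feed the maximizer into a parabolic Carnot theorem on sums to produce matching horizontal jets at the two base points. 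Subtracting the two viscosity inequalities and sending $\varepsilon,\sigma,\eta\to 0$ should yield a contradiction. Uniqueness is then immediate: two solutions of (\ref{maincon}) are mutually sub- and supersolutions sharing the Cauchy--Dirichlet data, so comparison forces them to coincide.

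Next I would run Perron's method. Define $u(x,t)=\sup\{v(x,t): v \text{ a subsolution}, \ v\le w \text{ on } \partial_p\}$, the sup taken over subsolutions dominated by a fixed barrier majorant respecting the data. The two routine halves are that $u^\ast$ is a subsolution (closedness of subsolutions under suprema, via the upper-semicontinuous envelope and the Carnot parabolic jets) and that $u_\ast$ is a supersolution (the bump argument: were it not, one could push a subsolution strictly above $u$ near a bad point without violating the data, contradicting maximality). To force continuity up to $\partial_p$ I would construct, at each lateral boundary point $x_0$ and each initial point, barriers of cone-plus-time type such as $a+b\,d(\cdot,x_0)\pm Ct$, using that gauge cones are intrinsic solutions of the stationary infinite Laplacian away from the vertex and choosing $C$ to absorb the time derivative; the initial slice is matched to $\psi$ through its modulus of continuity. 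Comparison then gives $u^\ast\le u_\ast$, while $u_\ast\le u\le u^\ast$ holds automatically, so $u=u^\ast=u_\ast$ is continuous and solves the problem.

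The main obstacle will be the comparison principle, for two reasons. The infinite Laplacian is not uniformly elliptic and its dependence on $\nabla_0 u$ is singular where the horizontal gradient vanishes, so I must work with the proper (restricted) notion of viscosity solution and control the doubling argument precisely on the set where the penalization gradient is small; keeping the gauge penalization at fourth power is the standard remedy, since the induced gradients then vanish compatibly at coincidence points. Compounding this, the Carnot theorem on sums returns only the horizontal parts of the Hessians, and the noncommutativity of the generating vector fields injects first-order correction terms that must be shown negligible in the limit. Establishing this intrinsic parabolic theorem on sums, together with the structural hypotheses on the class of admissible operators that give the subtracted inequalities a definite sign, is the technical heart of the argument.
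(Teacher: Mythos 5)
Your overall architecture---comparison via doubling of variables fed through a Carnot parabolic theorem on sums, then Perron's method---is the same as the paper's (Theorem \ref{pinf} plus Theorem \ref{exist}), but two of your concrete choices fail, and both are load-bearing. First, the fixed fourth-power gauge penalization cannot close the comparison argument when $h>1$. With an $m$-th power penalization $\varphi$, the Carnot parabolic maximum principle (Corollary \ref{cpmpcor}) gives $\|\Upsilon_\tau\|\sim\varphi_\tau^{(m-1)/m}$ and $\ip{(\mathcal{X}_\tau-\mathcal{Y}_\tau)\xi}{\xi}\lesssim\tau\varphi_\tau^{(2m-4)/m}\|\xi\|^2$, so subtracting the two viscosity inequalities yields
\[
0<\frac{\varepsilon}{T^2}\;\lesssim\;\|\tau\Upsilon_\tau\|^{h-3}\,\tau^2\,\ip{(\mathcal{X}_\tau-\mathcal{Y}_\tau)\Upsilon_\tau}{\Upsilon_\tau}\;\lesssim\;(\tau\varphi_\tau)^{h}\,\varphi_\tau^{(m-h-3)/m}.
\]
The only decay available is $\tau\varphi_\tau\to0$, so a contradiction requires $m\geq h+3$; this is exactly why the paper takes the penalization exponent to be an even integer $H\geq h+3$ chosen \emph{after} $h$. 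With $m=4$ the residual exponent is $(1-h)/4<0$ for every $h>1$, and since $\varphi_\tau\to0$ at an unknown rate, the factor $\varphi_\tau^{(1-h)/4}$ can blow up fast enough to defeat $(\tau\varphi_\tau)^h\to0$ (e.g.\ $\varphi_\tau\sim\tau^{-1-\delta}$ with $\delta$ small). The fourth power is adequate only for $h=1$, where the residual exponent is zero.

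Second, your uniform Perron construction cannot deliver the $h=1$ case, which the conjecture includes. Perron's method (yours and the paper's Theorem \ref{exist}) operates on continuous proper $F$, but at $h=1$ the operator $-\|\nabla_0u\|^{-2}\ip{(D^2u)^{\star}\nabla_0u}{\nabla_0u}$ admits no continuous extension across $\{\nabla_0u=0\}$ (unlike $1<h<3$, where setting it to zero works), and the solution concept itself changes: Definition \ref{2} tests with the largest/smallest eigenvalue of $(D^2\phi)^\star$ when the test function's horizontal gradient vanishes. The paper therefore proves $h=1$ existence by a stability argument instead: having solved the problem for all $h>1$, it takes the upper and lower relaxed limits of $u_h$ as $h\to1^+$, identifies them via a comparison principle for the limiting operator, and shows the locally uniform limit is the unique $h=1$ solution (Theorem \ref{oneexist}). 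Relatedly, in the comparison argument the genuine difficulty at $h=1$ is not ``where the penalization gradient is small'' but the coincidence case $p_\tau=q_\tau$, which requires doubling in time and the eigenvalue tests (Case 2 of Theorem \ref{pinf}, following Juutinen--Kawohl); your sketch does not address it. One point in your favor: your explicit cone-plus-time barriers go beyond the paper, whose Theorem \ref{exist} merely assumes a subsolution/supersolution pair with matching boundary envelopes---but to use them you would still owe a proof that Carnot cones are viscosity $\infty$-harmonic away from the vertex, which is a nontrivial fact in this setting.
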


In Sections 2 and 3, we review key properties of Carnot groups and parabolic viscosity solutions. In Section 4, we prove uniqueness and Section 5 covers existence. 
\section{Calculus on Carnot Groups}
We begin by denoting an arbitrary Carnot group in $\mathbb{R}^N$ by $G$ and its
corresponding Lie Algebra by $g$.   Recall that $g$ is nilpotent and
stratified, resulting in the decomposition $$g= V_1 \oplus V_2 \oplus
\cdots \oplus V_l$$ for appropriate vector spaces that satisfy the Lie
bracket relation $[V_1,V_j]=V_{1+j}.$ The Lie Algebra $g$ is associated
with the group $G$ via the exponential map $\exp: g \to G.$ Since
this
map is a diffeomorphism, we can choose a basis for $g$ so that it is
the identity map.  Denote this basis by $$X_1,X_2,\ldots ,
X_{n_1},Y_1,Y_2,\ldots ,Y_{n_2},Z_1,Z_2, \ldots , Z_{n_3}$$ so that
\begin{eqnarray*} 
V_1 & = & \textmd{span}\{X_1,X_2,\ldots ,X_{n_1}\} \\
V_2 & = & \textmd{span}\{Y_1,Y_2,\ldots ,Y_{n_2}\} \\
V_3 \oplus V_4 \oplus \cdots \oplus V_l & = &  
\textmd{span}\{Z_1,Z_2,\ldots ,Z_{n_3}\}.
\end{eqnarray*} 
%Note that given the standard basis $\{\frac{\p}{\p x_i}\}_{i=1}^N$, there are smooth functions $a_{ij}$ and $b_{ij}$ so that at every point $p \in G$, 
%\begin{eqnarray}
%(X_i)|_p & = & \sum_{j=1}^Na_{ij}(p)\frac{\p}{\p x_i} \label{Xrelation}\\
%(Y_i)|_p & = & \sum_{j=1}^Nb_{ij}(p)\frac{\p}{\p x_i}. \label{Yrelation}
%\end{eqnarray*}
%and for each $k=1,2,\ldots , n_2,$ there exists $i,j$ with $i<j$ so that 
%$$ Y_k=[X_i,X_j].$$ 
We endow $g$ with an inner product $\ip{\cdot}{\cdot}$
 and related norm $\|\cdot\|$ so that this basis is orthonormal. 
 Clearly, the Riemannian dimension of $g$ (and so $G$) is $N
= n_1+n_2+n_3$. 
However, we will also consider the homogeneous dimension of $G$, denoted $\mathcal{Q}$,
which is given by $$\mathcal{Q}=\sum_{i=1}^l i \cdot \dim V_i.$$

Before proceeding with the calculus, we recall the group and metric space properties. Since
the exponential map is the identity, the group law is the
Campbell-Hausdorff formula (see, for example, \cite{BO:LAG}). For our
purposes, this formula is given by 
\begin{equation}\label{CCH}
p \cdot q =
p+q+\frac{1}{2}[p,q]+R(p,q)
\end{equation} 
where $R(p,q)$ are terms of order $3$ or
higher. The identity element of $G$ will be denoted by $0$ and called the
origin. There is also a natural metric on $G$, which is
the Carnot-Carath\'{e}odory distance, defined for the points $p$ and $q$
as follows: 
 \begin{equation*}
d_C(p,q)= \inf_{\Gamma} \int_{0}^{1} \| \gamma '(t) \| dt 
\end{equation*}
where the set $ \Gamma $
is the set of all  curves $ \gamma $ such that $ \gamma (0) = p,
 \gamma (1) = q $ and $\gamma '(t) \in V_1$.  
 By Chow's theorem (see, for example,
\cite{BR:SRG}) any two points
 can be connected by such a curve, which means $ d_C(p,q) $ is an honest
metric.  Define a Carnot-Carath\'{e}odory ball of radius $r$ centered at a
point $p_0$ by $$B(p_0,r)=\{p\in G : d_C(p,p_0) < r\}.$$
 %Using this distance, we define the homogeneous norm $|\cdot|$ on $G$ by
%$$|p|=d_C(0,p).$$ This norm can be estimated by the relation (see
%\cite {BR:SRG}, Thm 7.34) 
%\begin{equation}
%|p| \sim \sum_{i=1}^N \|x_i\|^{\frac{1}{w_i}} \label {est}
%\end{equation}
%when $p=(x_1,x_2,\ldots , x_{N})$ and $x_i \in \exp (V_{w_i}).$ 

In addition to the Carnot-Carath\'{e}odory metric, there is a smooth (off the origin) gauge.  This gauge is defined for a point $p=(\zeta_1, \zeta_2, \ldots, \zeta_l)$
with $\zeta_i \in V_i$ by 
\begin{equation}\label{gaugedef}
\mathcal{N}(p)=\bigg(\sum_{i=1}^l  \|\zeta_{i}\|^{\frac{\D 2l!}{\D i}}
\bigg )^\frac{\D 1}{\D 2l!}
\end{equation}
and it induces a metric $d_{\mathcal{N}}$ that is bi-Lipschitz equivalent to the 
Carnot-Carath\'{e}odory metric and is given by
$$d_{\mathcal{N}}(p,q) = \mathcal{N}(p^{-1}\cdot q).$$ We define a gauge ball 
of radius $r$ centered at a
point $p_0$ by $$B_{\mathcal{N}}(p_0,r)=\{p\in G : d_{\mathcal{N}}(p,p_0) < r\}.$$

In this environment, a smooth function $u: G \to \mathbb{R}$ has the horizontal derivative given by
$$\nabla_0 u=(X_1u,X_2u, \ldots, X_{n_1}u )$$ and the symmetrized horizontal second derivative matrix, 
denoted by $(D^2u)^\star$, with entries
\begin{eqnarray*} 
((D^2u)^\star)_{ij} =  \frac{1}{2} (X_iX_ju+X_jX_iu) 
\end{eqnarray*} 
for $i,j=1,2,\ldots , n_1.$  We also consider the semi-horizontal derivative given by 
$$\nabla_1 u=(X_1u,X_2u, \ldots, X_{n_1}u, Y_1u,Y_2u, \ldots, Y_{n_2}u).$$

Using the above derivatives, we define the $h$-homogeneous infinite Laplace operator for $h\geq 1$ by
\begin{equation*}
\Delta^h_{\infty}f  = \|\nabla_0 f\|^{h-3} \sum_{i,j=1}^{n_1} X_ifX_jfX_iX_jf =  \|\nabla_0 f\|^{h-3}\ip{(D^2f)^{\star}\nabla_0f}{\nabla_0f}.
 % & = & \frac{1}{2}\ip{\nabla_0\; \|\nabla_0f\|^2} {\nabla_0f}.
\end{equation*}
Given $T>0$ and a function $u:G \times [0,T] \to \mathbb{R}$, we may define the analogous subparabolic infinite Laplace operator by 
$$u_t-\Delta^h_{\infty}u$$ and we consider the corresponding equation
\begin{equation}\label{eqmain}
u_t-\Delta^h_{\infty}u=0.
\end{equation}
We note that when $h\geq 3$, this operator is continuous. When $h=3$, we have the subparabolic infinite Laplace equation analogous to the infinite Laplace operator in \cite{B:MP}. The Euclidean analog for $h=1$ has been explored in  \cite{JK:JK} and the Euclidean analog for $1<h<3$ in \cite{PV}. 
 
We recall that for any open set $\mathcal{O} \subset G$, the
function $f$ is in the horizontal Sobolev space $W^{1,\tp}(\mathcal{O})$ if
$f$ and $ X_if$ are in $ L^\tp(\mathcal{O}) $ for $i=1,2,\ldots , n_1$. Replacing $ L^\tp(\mathcal{O})$ by $L_{loc}^\tp(\mathcal{O})$, the space $ W_{loc}^{1,\tp}(\mathcal{O}) $ is defined similarly.  The space $W_{0}^{1,\tp}(\mathcal{O})$ is the closure in $W^{1,\tp}(\mathcal{O})$ of smooth functions with
compact support.  In addition, we recall a function $u:G\to
\mathbb{R}$ is $\ccs$ if $\nabla_1u$ and $X_iX_ju$ are continuous for
all $i,j=1,2,\ldots n_1$. Note that $\ccs$ is not equivalent to (Euclidean) $C^2$. 
For spaces involving time, the space $C(t_1,t_2;X)$ consists of all continuous functions $u:[t_1,t_2]\to X$ with $\max_{t_1\leq t\leq t_2} \|u(\cdot,t)\|_X <\infty$. A similar definition holds for $L^\tp(t_1,t_2;X)$. 

Given an open box $\mathcal{O}=(a_1,b_1)\times (a_2,b_2)\times \cdots \times (a_N,b_N)$, we define the parabolic space $\mathcal{O}_{t_1,t_2}$ to be $\mathcal{O} \times [t_1,t_2]$.  Its parabolic boundary is given by 
$\p_{\pb}\mathcal{O}_{t_1,t_2} = (\overline{\mathcal{O}}\times \{t_1\}) \cup (\p \mathcal{O}\times (t_1,t_2])$. %Following \cite{JLM:E}, we define the function space $V^\tp (t_1,t_2; \mathcal{O}) =C(t_1,t_2;L^2(\mathcal{O}))\cap L^\tp(t_1,t_2; W^{1,\tp}(\mathcal{O}))$. For a function $u\in V^\tp(t_1,t_2;\mathcal{O})$, we have $t\mapsto \int_\mathcal{O}  |u(x, t)| ^2\;dx$ is a continuous map in $[t_1,t_2]$, $\nabla_0u(x,t)$ exists for almost every $t\in [t_1,t_2]$ and $\int_{t_1}^{t_2}\int_\mathcal{O} u^2+|\nabla_0u|^\tp\;dxdt$ is finite. 
 
Finally, recall that if $G$ is a Carnot group with homogeneous dimension $\mathcal{Q}$, then $G \times \mathbb{R}$ is again a Carnot group of homogeneous dimension $\mathcal{Q} + 1$ where we have added an extra vector field $\frac{\partial}{\partial t}$ to the first layer of the grading.  This allows us to give meaning to notations such as $W^{1,2}(\mathcal{O}_{t_1,t_2})$ and $\ccs(\mathcal{O}_{t_1,t_2})$ where we consider $\nabla_0u$ to be $\left(X_1u, X_2u, \ldots, X_{n_1}u, \frac{\partial u}{\partial t}\right)$.
\section{Parabolic Jets and Viscosity Solutions}
\subsection{Parabolic Jets}
In this subsection, we recall the definitions of the parabolic jets, as given in \cite{BM}, but included here for completeness.  We define the parabolic superjet of $u(p,t)$ at the point $(p_0,t_0) \in \mathcal{O}_{t_1,t_2}$, denoted $P^{2,+}u(p_0,t_0)$, by using triples $(a,\eta,X) \in \mathbb{R} \times V_1\oplus V_2 \times S^{n_1}$ so that $(a,\eta,X) \in P^{2,+}u(p_0,t_0)$ if 
\begin{eqnarray*}
u(p,t) & \leq & u(p_0,t_0) + a(t-t_0) + \ip{\eta}{\widehat{p_0^{-1}\cdot p}}+\frac{1}{2}\ip{X\overline{p_0^{-1}\cdot p}}{\overline{p_0^{-1}\cdot p}}\\
 & & + o(|t-t_0|+|p_0^{-1}\cdot p|^2)\ \ \textmd{as}\ \ (p,t)\rightarrow (p_0,t_0).
\end{eqnarray*}
We recall that $S^k$ is the set of $k\times k$ symmetric matrices and $n_i=\dim V_i$. We define $\overline{p_0^{-1}\cdot p}$ as the first $n_1$ coordinates of 
$p_0^{-1}\cdot p$ and $\widehat{p_0^{-1}\cdot p}$ as the first $n_1+n_2$ coordinates of 
$p_0^{-1}\cdot p$.  
This definition is an extension of the superjet definition for subparabolic equations in the Heisenberg group \cite{B:HP}.  We define the subjet $P^{2,-}u(p_0,t_0)$ by 
$$P^{2,-}u(p_0,t_0)=-P^{2,+}(-u)(p_0,t_0).$$  
We define the set theoretic closure of the superjet, denoted $\overline{P}^{2,+}u(p_0,t_0)$, by requiring $(a,\eta,X) \in \overline{P}^{2,+}u(p_0,t_0)$ exactly when there is a sequence \\
$(a_n,p_n,t_n,u(p_n,t_n),\eta_n,X_n)\to (a,p_0,t_0,u(p_0,t_0),\eta,X)$ with the triple \\ $(a_n,\eta_n,X_n)\in P^{2,+}u(p_n,t_n)$. A similar definition holds for the closure of the subjet.

We may also define jets using appropriate test functions. Given a function $u:\mathcal{O}_{t_1,t_2}\to \mathbb{R}$
we consider the set $\mathcal{A}u(p_0,t_0)$ given by
\begin{equation*}
\mathcal{A}u(p_0,t_0) = \{\phi \in \ccs(\mathcal{O}_{t_1,t_2}):u(p,t)-\phi(p,t)\leq u(p_0,t_0)-\phi(p_0,t_0)=0\  \forall (p,t)\in \mathcal{O}_{t_1,t_2}\}.
\end{equation*}
consisting of all test functions that touch $u$ from above at $(p_0,t_0)$. We define 
the set of all test functions that touch from below, denoted $\mathcal{B}u(p_0,t_0)$,
similarly.

The following lemma relates the test functions to jets. The proof is identical to Lemma 3.1 in \cite{B:HP}, but uses the (smooth) gauge $\mathcal{N}(p)$ instead of Euclidean distance.
\begin{lemma}
$$P^{2,+}u(p_0,t_0)=\{(\phi_t(p_0,t_0),\nabla \phi(p_0,t_0), (D^2\phi(p_0,t_0))^\star): \phi \in \mathcal{A}u(p_0,t_0)\}.$$ 
\end{lemma}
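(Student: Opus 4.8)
The plan is to establish the set equality by a double inclusion, following the template for the analytic-versus-test-function characterization of jets. Write $P$ for the model polynomial
\[
P(p,t)=u(p_0,t_0)+a(t-t_0)+\ip{\eta}{\widehat{p_0^{-1}\cdot p}}+\tfrac12\ip{X\,\overline{p_0^{-1}\cdot p}}{\overline{p_0^{-1}\cdot p}},
\]
so that $(a,\eta,X)\in P^{2,+}u(p_0,t_0)$ means precisely $u-P\le o(|t-t_0|+|p_0^{-1}\cdot p|^2)$ as $(p,t)\to(p_0,t_0)$. For the inclusion $\supseteq$, fix $\phi\in\ma u(p_0,t_0)$. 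The crux is a second-order group Taylor expansion: since $\phi\in\ccs$, its semi-horizontal gradient $\nabla_1\phi$ and the entries $X_iX_j\phi$ are continuous, and expanding $\phi$ in the Campbell--Hausdorff coordinates $p_0^{-1}\cdot p$ gives
\[
\phi(p,t)=\phi(p_0,t_0)+\phi_t(p_0,t_0)(t-t_0)+\ip{\nabla_1\phi(p_0,t_0)}{\widehat{p_0^{-1}\cdot p}}+\tfrac12\ip{(D^2\phi(p_0,t_0))^\star\,\overline{p_0^{-1}\cdot p}}{\overline{p_0^{-1}\cdot p}}+o(|t-t_0|+|p_0^{-1}\cdot p|^2),
\]
where the order-three-and-higher terms $R(p,q)$ of (\ref{CCH}) are absorbed into the remainder and the second-layer directions enter exactly through the $Y_j\phi$ components of $\nabla_1\phi$ paired against $\widehat{p_0^{-1}\cdot p}$. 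Combining this expansion with the touching inequality $u-\phi\le u(p_0,t_0)-\phi(p_0,t_0)=0$ yields $u-P\le o(\cdots)$ with $(a,\eta,X)=(\phi_t(p_0,t_0),\nabla_1\phi(p_0,t_0),(D^2\phi(p_0,t_0))^\star)$, which is the desired membership.

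For the reverse inclusion $\subseteq$, given $(a,\eta,X)\in P^{2,+}u(p_0,t_0)$ I would produce a test function in $\ma u(p_0,t_0)$ carrying exactly this jet. First note that $P$ is a smooth function of $p$ and $t$, so applying the expansion above to $P$ itself shows its jet at $(p_0,t_0)$ is $(a,\eta,X)$. It remains to add a nonnegative correction that forces global domination $u\le\phi$ while vanishing to second order at the base point. Set
\[
\rho(r)=\sup\{\,u(p,t)-P(p,t): (p,t)\in\mathcal{O}_{t_1,t_2},\ \mathcal{N}(p_0^{-1}\cdot p)^2+(t-t_0)^2\le r\,\},
\]
which is finite for small $r$ and satisfies $\rho(r)=o(r)$ as $r\to0^+$, directly from the jet inequality. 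Choose a smooth nondecreasing majorant $\sigma\ge\rho$ near $0$ with $\sigma(0)=\sigma'(0)=0$, extended so as to dominate $u-P$ on the whole cylinder (a routine localization, e.g.\ adding a term supported away from $(p_0,t_0)$), and define
\[
\phi(p,t)=P(p,t)+\sigma\!\left(\mathcal{N}(p_0^{-1}\cdot p)^2+(t-t_0)^2\right).
\]
By construction $u\le\phi$ on $\mathcal{O}_{t_1,t_2}$ with equality at $(p_0,t_0)$, so $\phi\in\ma u(p_0,t_0)$.

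The main obstacle is the simultaneous regularity-and-matching check for the correction term. Because the Carnot--Carath\'eodory distance is not smooth, one cannot use it here; the point flagged before the statement is that the gauge $\mathcal{N}$ is smooth off the origin, so $\sigma\circ(\mathcal{N}^2+(t-t_0)^2)$ is $\ccs$, and the flatness $\sigma(0)=\sigma'(0)=0$ guarantees that its semi-horizontal gradient and symmetrized horizontal Hessian vanish at $(p_0,t_0)$. Hence $\phi$ inherits the jet of $P$, giving $\phi_t(p_0,t_0)=a$, $\nabla_1\phi(p_0,t_0)=\eta$, and $(D^2\phi(p_0,t_0))^\star=X$. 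The delicate bookkeeping is verifying that the higher-layer Campbell--Hausdorff terms and the correction contribute only to the $o(\cdot)$ remainder and that the symmetrization in $(D^2\phi)^\star$ reproduces $X$ rather than some unsymmetrized variant; this is exactly the computation of Lemma 3.1 in \cite{B:HP}, carried out with $\mathcal{N}$ in place of the Euclidean distance.
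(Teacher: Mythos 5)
Your forward inclusion (test function $\Rightarrow$ jet) is essentially the argument the paper invokes by citation: a stratified second-order Taylor expansion for $\ccs$ functions in the group coordinates $p_0^{-1}\cdot p$, with the gauge controlling the remainder; that part is fine as a sketch (though the remainder control comes from the stratified Taylor inequality for $\ccs$ functions, not merely from absorbing the Campbell--Hausdorff terms $R(p,q)$). The genuine gap is in the reverse inclusion. Your correction term cannot exist in general: a \emph{smooth} $\sigma$ with $\sigma(0)=\sigma'(0)=0$ satisfies $\sigma(s)=O(s^2)$ near $0$ by Taylor's theorem, while your modulus $\rho$ need not be that small. The root cause is parabolic scaling: the jet error is $o(|t-t_0|+\mathcal{N}(p_0^{-1}\cdot p)^2)$, first order in time, but your distance $\mathcal{N}^2+(t-t_0)^2$ squares the time increment. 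Concretely, take $u=P+|t-t_0|^{3/2}$; then $(a,\eta,X)\in P^{2,+}u(p_0,t_0)$ since $|t-t_0|^{3/2}=o(|t-t_0|)$, yet $\rho(r)\geq\sup\{|t-t_0|^{3/2}:(t-t_0)^2\leq r\}=r^{3/4}$, which no function $\sigma(r)=O(r^2)$ can dominate. (The lemma is still true in this example, but the touching function must be only $C^1$ in time, e.g.\ $P+2|t-t_0|^{3/2}$, which is not of your form.) Note the same rigidity already bites in the space variable alone: an $o(\mathcal{N}^2)$ error such as $\mathcal{N}^3$ cannot be dominated by $\sigma(\mathcal{N}^2)=O(\mathcal{N}^4)$.

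The repair, which is what the proof of Lemma 3.1 in \cite{B:HP} (cited by the paper, with $\mathcal{N}$ replacing the Euclidean distance) actually carries out, is to treat time and space anisotropically and to demand only \emph{flatness of derivatives}, not smoothness-composed-with-a-square. For time, dominate the $o(|t-t_0|)$ part by $\theta_1(|t-t_0|)$ where $\theta_1$ is $C^1$, nondecreasing, $\theta_1(0)=\theta_1'(0)=0$; such a majorant of an $o(s)$ modulus exists by the standard averaging $s\mapsto \frac{1}{s}\int_s^{2s}\omega(\tau)\,d\tau$, and $\theta_1(|t-t_0|)$ is $C^1$ in $t$ with vanishing $t$-derivative at $t_0$ --- this is exactly where it matters that test functions are required to be only $C^1$ in time. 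For space, dominate the $o(\mathcal{N}^2)$ part by $\theta_2(\mathcal{N}(p_0^{-1}\cdot p))$ where $\theta_2$ is produced by iterating the averaging so that it is $C^2$ on $(0,\infty)$ with $\theta_2(r)=o(r^2)$, $\theta_2'(r)=o(r)$, $\theta_2''(r)=o(1)$; these decay rates (weaker than your smoothness requirement) are what make the composition $\ccs$ with vanishing $\nabla_1$ and $(D^2\cdot)^\star$ at $p_0$, using that $\mathcal{N}$ is smooth off the origin and homogeneous. With that two-part correction added to $P$, the rest of your outline (the jet of $P$ is $(a,\eta,X)$, symmetrization kills the bracket contributions) goes through.
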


\subsection{Jet Twisting}
We recall that the set $V_1=\span \{X_1,X_2,\ldots, X_{n_1}\}$ and notationally, we will always denote $n_1$ by $n$. The vectors $X_i$ at the point $p\in G$ can be written as 
$$X_i(p)=\sum_{j=1}^N a_{ij}(p)\frac{\p}{\p x_j}$$
forming the $n\times N$ matrix $\mathbb{A}$ with smooth entries $\mathbb{A}_{ij}=a_{ij}(p)$. By linear independence of the $X_i$, $\mathbb{A}$ has rank $n$. Similarly, $$Y_i(p)=
\sum_{j=1}^N b_{ij}(p)\frac{\p}{\p x_j}$$ forming the $n_2 \times N$ matrix $\mathbb{B}$ with smooth entries $\mathbb{B}_{ij}=b_{ij}$. The matrix $\mathbb{B}$ has rank $n_2$. 
The following lemma differs from \cite[Corollary 3.2]{B:MP} only in that there is now a parabolic term. This term however, does not need to be twisted. The proof is then identical, as only the space terms need twisting.  
\begin{lemma}\label{twist}
Let $(a,\eta, X) \in \overline{P}^{2,+}_{\eucl}u(p,t)$. (Recall that $(\eta,X) \in \mathbb{R}^N \times S^N$.)  Then $$(a, \mathbb{A} \cdot \eta \oplus \mathbb{B} \cdot \eta, \  
\mathbb{A}X\mathbb{A}^T+\mathbb{M}) \in \overline{P}^{2,+}u(p,t).$$ Here the entries of the (symmetric) matrix $\mathbb{M}$ are given by 
\begin{eqnarray*}
 \mathbb{M}_{ij} & = &\begin{cases}
  \D \sum_{k=1}^N\D\sum_{l=1}^N \bigg(a_{il}(p)\frac{\D\p}{\D\p x_l}a_{jk}(p)+a_{jl}(p)\frac{\D\p a_{ik}}{\D\p x_l}(p)\bigg)\eta_k     & i \neq j, \\
    & \\
  \D\sum_{k=1}^N\D\sum_{l=1}^N a_{il}(p)\frac{\D\p a_{ik}}{\D\p x_l}(p)\eta_k    & i = j.
\end{cases}
\end{eqnarray*}
\end{lemma}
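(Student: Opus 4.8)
The plan is to route everything through smooth test functions, reducing the statement to the spatial twisting already carried out in \cite[Corollary 3.2]{B:MP} and isolating the single new feature — the parabolic slot — to check that it needs no twisting. I would first treat the ``open'' jet case: given $(a,\eta,X)\in P^{2,+}_{\eucl}u(p,t)$, the Euclidean analogue of the test-function lemma of this section (applied on the flat group $\mathbb{R}^N\times\mathbb{R}$) produces a Euclidean $C^2$ function $\phi$ that touches $u$ from above at $(p,t)$ and whose Euclidean jet is exactly $(a,\eta,X)$, i.e. $\phi_t(p,t)=a$, $\nabla_{\eucl}\phi(p,t)=\eta$, $D^2_{\eucl}\phi(p,t)=X$. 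Since the frame coefficients $a_{ij},b_{ij}$ are smooth, every Euclidean $C^2$ function is automatically $\ccs$, so $\phi\in\ccs(\mathcal{O}_{t_1,t_2})$; and because ``touching from above'' is a purely pointwise condition independent of the chosen frame, $\phi\in\mathcal{A}u(p,t)$ in the Carnot sense as well. The test-function lemma of this section then places the genuine Carnot jet $\big(\phi_t(p,t),\,\nabla_1\phi(p,t),\,(D^2\phi(p,t))^\star\big)$ into $P^{2,+}u(p,t)$.

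The next step is to express these three Carnot quantities through the Euclidean data. The first-order piece is immediate: $X_i\phi=\sum_j a_{ij}\,\partial_{x_j}\phi$ and $Y_i\phi=\sum_j b_{ij}\,\partial_{x_j}\phi$ give $\nabla_1\phi(p,t)=\mathbb{A}\eta\oplus\mathbb{B}\eta$. For the second-order piece I would apply the product rule twice,
\begin{equation*}
X_iX_j\phi=\sum_{k,l}a_{il}a_{jk}\,\frac{\partial^2\phi}{\partial x_l\,\partial x_k}+\sum_{k,l}a_{il}\,\frac{\partial a_{jk}}{\partial x_l}\,\frac{\partial\phi}{\partial x_k},
\end{equation*}
and symmetrize in $i,j$. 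The first sum assembles into $(\mathbb{A}X\mathbb{A}^T)_{ij}$, while the second, using $\partial_{x_k}\phi(p,t)=\eta_k$, yields precisely the frame-derivative matrix $\mathbb{M}$ recorded in the statement (the displayed diagonal and off-diagonal entries being the symmetrizations of this second sum). This is the computation of \cite[Corollary 3.2]{B:MP}, transcribed verbatim; it takes place at fixed time and is therefore untouched by the parabolic variable.

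The decoupling of the time slot is the one point that must be verified rather than quoted, and it is where the assertion that ``the parabolic term does not need to be twisted'' is earned. Since $\partial/\partial t$ is adjoined to the first layer and the $a_{ij},b_{ij}$ depend only on the space variable, we have $[\partial/\partial t,X_i]=[\partial/\partial t,Y_j]=0$; this vector field contributes no higher-layer brackets, so in $G\times\mathbb{R}$ the time coordinate of $(p,t)^{-1}\cdot(q,s)$ is simply $s-t$, and $\phi_t$ is the same operator in Euclidean and Carnot coordinates. Hence the time coefficient passes through as $a$, the linear-in-time term $a(s-t)$ appears identically in both expansions, and combining this with the spatial computation yields $\big(a,\mathbb{A}\eta\oplus\mathbb{B}\eta,\mathbb{A}X\mathbb{A}^T+\mathbb{M}\big)\in P^{2,+}u(p,t)$.

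Finally I would upgrade from the open jet to its closure. Given $(a,\eta,X)\in\overline{P}^{2,+}_{\eucl}u(p,t)$, take the defining sequence $(a_n,p_n,t_n,u(p_n,t_n),\eta_n,X_n)$ with $(a_n,\eta_n,X_n)\in P^{2,+}_{\eucl}u(p_n,t_n)$, apply the open-jet result at each $(p_n,t_n)$, and let $n\to\infty$. Continuity of $\mathbb{A}$, $\mathbb{B}$ and of the derivatives $\partial_{x_l}a_{ik}$ (all smooth) guarantees that the twisted triples converge to $\big(a,\mathbb{A}\eta\oplus\mathbb{B}\eta,\mathbb{A}X\mathbb{A}^T+\mathbb{M}\big)$ at $(p,t)$, placing it in $\overline{P}^{2,+}u(p,t)$. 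Because the spatial twisting is quoted directly from \cite[Corollary 3.2]{B:MP}, I do not expect a serious obstacle: the only genuinely new ingredient is the decoupling of the parabolic slot, which the vanishing brackets render immediate, so the remaining labor is the bookkeeping of $\mathbb{M}$ and the routine passage to the limit.
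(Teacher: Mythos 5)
Your proposal is correct and takes essentially the same route as the paper: the paper's entire proof is the remark that the argument is identical to \cite[Corollary 3.2]{B:MP} because the parabolic slot needs no twisting, and your write-up is precisely that reduction carried out in detail (test-function representation of the Euclidean jet, the frame computation giving $\mathbb{A}\eta\oplus\mathbb{B}\eta$, $\mathbb{A}X\mathbb{A}^T+\mathbb{M}$, the commutation $[\partial/\partial t, X_i]=0$ decoupling the time coefficient, and the passage to the jet closure by continuity of the frame coefficients).
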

\subsection{Viscosity Solutions}
We consider parabolic equations of the form 
\begin{equation}\label{main}
u_t+F(t,p,u,\nabla_1 u,(D^2u)^{\star})=0
\end{equation}
for continuous and proper $F:[0,T]\times G \times \mathbb{R} \times g \times S^{n} \to \mathbb{R}$.  \cite{CIL:UGTVS}
We recall that $S^{n}$ is the set of $n \times n$ symmetric matrices (where $\dim V_1=n$) and the derivatives  $\nabla_1 u$ and $(D^2u)^{\star}$ are taken in the space variable $p$.  
We then use the jets to define subsolutions and supersolutions to Equation \eqref{main} in the usual way.
\begin{definition}\label{1}
Let $(p_0,t_0)\in \mathcal{O}_{t_1,t_2}$ be as above.  The upper semicontinuous function $u$ is a \emph{parabolic viscosity subsolution} in $\mathcal{O}_{t_1,t_2}$ if for all $(p_0,t_0) \in \mathcal{O}_{t_1,t_2}$ we have $(a,\eta,X) \in \overline{P}^{2,+}u(p_0,t_0)$ produces 
$$a+F(t_0,p_0,u(p_0,t_0),\eta,X)\leq 0.$$
A lower semicontinuous function $u$ is a \emph{parabolic viscosity supersolution} in $\mathcal{O}_{t_1,t_2}$ if for all $(p_0,t_0) \in \mathcal{O}_{t_1,t_2}$ we have $(b,\nu,Y) \in \overline{P}^{2,-}u(p_0,t_0)$ produces 
$$b+F(t_0,p_0,u(p_0,t_0),\nu,Y)\geq 0.$$
A continuous function $u$ is a \emph{parabolic viscosity solution} in $\mathcal{O}_{t_1,t_2}$ if it is both a parabolic viscosity subsolution and parabolic viscosity supersolution.
\end{definition}
\begin{remark}
In the special case when $F(t,p,u,\nabla_1 u,(D^2u)^{\star})=F^h_\infty(\nabla_0 u,(D^2u)^{\star})=-\Delta^h_{\infty}u$, for $h\geq 3$, we use the terms ``parabolic viscosity h-infinite supersolution", etc. 
\end{remark}

In the case when $1\leq h<3$, the definition above is insufficient due to the singularity occurring when the horizontal gradient vanishes.  Therefore, following \cite{JK:JK} and \cite{PV}, we define viscosity solutions to
Equation \eqref{eqmain} when $1\leq h<3$ as follows: 
\begin{definition}\label{2}
Let $\mathcal{O}_{t_1,t_2}$ be as above. A lower semicontinuous function 
$v:\mathcal{O}_{t_1,t_2}\to\mathbb{R}$ is a \emph{parabolic viscosity h-infinite supersolution} of $u_t-\Delta^h_\infty u=0$ if whenever $(p_0,t_0)\in \mathcal{O}_{t_1,t_2}$ and $\phi\in \mathcal{B}u(p_0,t_0)$, we have 
\begin{eqnarray*}\left\{\begin{array}{cc}
\phi_t(p_0,t_0)-\Delta^h_\infty \phi(p_0,t_0) \geq 0 & \textmd{when}\ \ \nabla_0 \phi(p_0,t_0) \neq 0 \\ \mbox{} & \mbox{} \\
\phi_t(p_0,t_0)-\D \min_{\|\eta\|=1}\ip{(D^2\phi)^\star(p_0,t_0)\;\eta}{\eta} \geq 0 & \textmd{when}\ \ \nabla_0 \phi(p_0,t_0) = 0 \ \textmd{and\ \ } h=1 \\
\phi_t(p_0,t_0) \geq 0 & \textmd{when}\ \ \nabla_0 \phi(p_0,t_0) = 0 \ \textmd{and\ \ } 1<h<3 
\end{array}\right.
\end{eqnarray*}
An upper semicontinuous function 
$u:\mathcal{O}_{t_1,t_2}\to\mathbb{R}$ is a \emph{parabolic viscosity h-infinite subsolution} of $u_t-\Delta^h_\infty u=0$ if whenever $(p_0,t_0)\in \mathcal{O}_{t_1,t_2}$ and $\phi\in \mathcal{A}u(p_0,t_0)$, we have 
\begin{eqnarray*}\left\{\begin{array}{cc}
\phi_t(p_0,t_0)-\Delta^h_\infty \phi(p_0,t_0) \leq 0 & \textmd{when}\ \ \nabla_0 \phi(p_0,t_0) \neq 0 \\  \mbox{} & \mbox{} \\
\phi_t(p_0,t_0)-\D \max_{\|\eta\|=1}\ip{(D^2\phi)^\star(p_0,t_0)\;\eta}{\eta} \leq 0 & \textmd{when}\ \ \nabla_0 \phi(p_0,t_0) = 0 \ \textmd{and\ \ } h=1 \\
\phi_t(p_0,t_0) \leq 0 & \textmd{when}\ \ \nabla_0 \phi(p_0,t_0) = 0 \ \textmd{and\ \ } 1<h<3 
\end{array}\right.
\end{eqnarray*}
A continuous function is a \emph{parabolic viscosity h-infinite solution} if it is both a parabolic viscosity h-infinite subsolution and parabolic viscosity h-infinite subsolution. 
\end{definition}
\begin{remark}
When $1<h<3$, we can actually consider the continuous operator 
\begin{eqnarray}\label{relax}
F^h_\infty(\nabla_0 u,(D^2u)^{\star}) = 
\left\{\begin{array}{cl}
-\|\nabla_0u\|^{h-3}\ip{(D^2u)^{\star}\nabla_0u}{\nabla_0u}=-\Delta^h_\infty u & \nabla_0u \neq 0 \\
0 & \nabla_0u = 0. \end{array}\right.
\end{eqnarray}
Definitions \ref{1} and {2} would then agree. (cf. \cite{PV})
\end{remark}
%For technical reasons later, we need to introduce an approximation of the 1-homogeneous subparabolic infinite Laplacian. Given $\epsilon, \delta>0$, we define the operator $\widetilde{\Delta^{\epsilon,\delta}_\infty}$ by 
%\begin{equation}\label{edapprox}
%\widetilde{\Delta^{\epsilon,\delta}_\infty} u = -\epsilon \div (\nabla_0u) - \frac{1}{\|\nabla_0 u\|^2 + \delta^2} \ip{ (D^2u)^\star \nabla_0u}{\nabla_0u}. 
%\end{equation}
%Note that for each $\epsilon, \delta>0$ the corresponding subparabolic equation has the form of Equation \eqref{main}, since 
 %$$F^{\epsilon,\delta}(t,p,u,\nabla_1 u,(D^2u)^{\star})=
%-\epsilon \trace((D^2u)^{\star}) -\frac{1}{\|\nabla_0 u\|^2 + \delta^2} \ip{ (D^2u)^\star \nabla_0u}{\nabla_0u}$$ is continuous and proper. In addition, $\lim_{\epsilon, \delta\to 0}\widetilde{\Delta^{\epsilon,\delta}_\infty}=
%\widetilde{\Delta_\infty}$. 

We also wish to define what \cite{J:PD} refers to as parabolic viscosity solutions. We first need to consider the set $$\mathcal{A}^-u(p_0,t_0)=\{\phi \in \mathcal{C}^2(\mathcal{O}_{t_1,t_2}): u(p,t)-\phi(p,t) \leq u(p_0,t_0)-\phi(p_0,t_0)=0\  \textmd{for}\ p \neq p_0, t < t_0\}$$ 
consisting of all functions that touch from above only when $t<t_0$.  
Note that this set is larger than $\mathcal{A}u$ and corresponds physically to the past alone playing a role in determining the present. We define $\mathcal{B}^-u(p_0,t_0)$ similarly.
We then have the following definition.

\begin{definition}
An upper semicontinuous function $u$ on $\mathcal{O}_{t_1,t_2}$ is a \emph{past parabolic viscosity subsolution} in $\mathcal{O}_{t_1,t_2}$ if $\phi\in \mathcal{A}^-u(p_0,t_0)$ produces 
$$\phi_t(p_0,t_0)+F(t_0,p_0,u(p_0,t_0),\nabla_1 \phi(p_0,t_0),(D^2\phi(p_0,t_0))^{\star}) \leq 0.$$ 
An lower semicontinuous function $u$ on $\mathcal{O}_{t_1,t_2}$ is a \emph{past parabolic viscosity supersolution} in $\mathcal{O}_{t_1,t_2}$ if $\phi\in \mathcal{B}^-u(p_0,t_0)$ produces 
$$\phi_t(p_0,t_0)+F(t_0,p_0,u(p_0,t_0),\nabla_1 \phi(p_0,t_0),(D^2\phi(p_0,t_0))^{\star}) \geq 0.$$ 
A continuous function is a \emph{past parabolic viscosity solution} if it is both a past parabolic viscosity supersolution and subsolution.
\end{definition} 
We have the following proposition whose proof is obvious. 
\begin{prop}\label{onedir}
Past parabolic viscosity sub(super-)solutions are parabolic viscosity sub(super-)solutions. In particular, past parabolic viscosity h-infinite sub(super-)solutions are parabolic viscosity h-infinite subsub(super-)solutions for $h \geq 1$. 
\end{prop}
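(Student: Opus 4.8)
The plan is to reduce everything to the elementary remark that touching $u$ from above at $(p_0,t_0)$ \emph{over the entire cylinder} is stronger than touching from above merely on the past sub-cylinder $\{p\neq p_0,\ t<t_0\}$, so that the defining inequality of a \emph{past} subsolution, being imposed on the larger family $\mathcal{A}^-u(p_0,t_0)$, is in particular imposed on the test functions whose jets fill out $P^{2,+}u(p_0,t_0)$. Concretely, I would first treat the general equation \eqref{main} under Definition~\ref{1}. Fix $(p_0,t_0)\in\mathcal{O}_{t_1,t_2}$ and let $(a,\eta,X)\in P^{2,+}u(p_0,t_0)$. By the Lemma identifying $P^{2,+}u(p_0,t_0)$ with the jets of functions in $\mathcal{A}u(p_0,t_0)$, there is a $\phi\in\mathcal{A}u(p_0,t_0)$ with $a=\phi_t(p_0,t_0)$, $\eta=\nabla_1\phi(p_0,t_0)$ and $X=(D^2\phi(p_0,t_0))^\star$. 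Since the test functions furnished by that Lemma are built from the smooth (off the origin) gauge $\mathcal{N}$ and one may use its even powers, $\phi$ may be taken smooth, hence $\mathcal{C}^2$; and as $\phi$ touches $u$ from above at $(p_0,t_0)$ over all of $\mathcal{O}_{t_1,t_2}$, it does so a fortiori for $p\neq p_0,\ t<t_0$. Thus $\phi\in\mathcal{A}^-u(p_0,t_0)$, and the past-subsolution inequality for $\phi$ reads $a+F(t_0,p_0,u(p_0,t_0),\eta,X)\leq 0$, which is precisely the subsolution inequality for the jet $(a,\eta,X)$.

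Next I would pass to the set-theoretic closure, which is all that distinguishes $P^{2,+}u$ from $\overline{P}^{2,+}u$ in Definition~\ref{1}. Given $(a,\eta,X)\in\overline{P}^{2,+}u(p_0,t_0)$, select $(a_n,\eta_n,X_n)\in P^{2,+}u(p_n,t_n)$ with $(a_n,p_n,t_n,u(p_n,t_n),\eta_n,X_n)\to(a,p_0,t_0,u(p_0,t_0),\eta,X)$. The previous paragraph applies at each $(p_n,t_n)$, giving $a_n+F(t_n,p_n,u(p_n,t_n),\eta_n,X_n)\leq 0$; letting $n\to\infty$ and invoking the continuity of $F$ yields $a+F(t_0,p_0,u(p_0,t_0),\eta,X)\leq 0$. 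Hence $u$ is a parabolic viscosity subsolution. The supersolution assertion is obtained verbatim by replacing $\mathcal{A}$, $\mathcal{A}^-$, $P^{2,+}$ with $\mathcal{B}$, $\mathcal{B}^-$, $P^{2,-}$ and reversing every inequality.

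Finally, for the $h$-infinite operator the case $h\geq 3$ is covered above with $F=F^h_\infty$, while for $1<h<3$ the relaxed operator \eqref{relax} is continuous and Definitions~\ref{1} and \ref{2} agree, so the same jet argument applies. For the remaining singular regime the past notion is understood as the past analogue of Definition~\ref{2}, and one compares the two test-function families directly: any $\phi\in\mathcal{A}u(p_0,t_0)$ has its jet realized (as above) by a smooth, past-touching $\psi\in\mathcal{A}^-u(p_0,t_0)$ agreeing with $\phi$ in $\phi_t(p_0,t_0)$, $\nabla_0\phi(p_0,t_0)$ and $(D^2\phi(p_0,t_0))^\star$, and the inequalities of Definition~\ref{2} depend only on these quantities, with the case split governed solely by whether $\nabla_0\phi(p_0,t_0)$ vanishes; they therefore transfer unchanged and require no closure. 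I expect no genuine obstacle: the only two points needing a word are that superjet elements can be realized by smooth, past-touching test functions — immediate from the smooth-gauge construction underlying the jet Lemma, which also reconciles the regularity class $\mathcal{C}^2$ of $\mathcal{A}^-u$ with the class $\ccs$ of $\mathcal{A}u$ — and the routine limit in the closure step, which follows from the continuity of $F$. This is precisely why the proposition admits an ``obvious'' proof.
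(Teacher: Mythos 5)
Your proposal is correct and takes the route the paper itself intends: the paper offers no written proof beyond declaring the result obvious, the point being precisely that $\mathcal{A}u(p_0,t_0)$ (suitably interpreted) sits inside the larger class $\mathcal{A}^-u(p_0,t_0)$, so the past-sub(super-)solution inequality transfers to every test function touching on the whole cylinder, hence to every jet. The two details you add --- realizing superjet elements by \emph{smooth} past-touching test functions, which bridges the $\mathcal{C}^2$ versus $\ccs$ regularity mismatch between the classes $\mathcal{A}^-u$ and $\mathcal{A}u$, and passing from $P^{2,+}u$ to $\overline{P}^{2,+}u$ via continuity of $F$ (with the direct test-function comparison in the singular case $h=1$) --- are exactly what the word ``obvious'' glosses over, and you resolve both correctly.
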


\subsection{The Carnot Parabolic Maximum Principle}
In this subsection, we recall the Carnot Parabolic Maximum Principle and key corollaries, as proved in \cite{BM}.  
\begin{lemma}[Carnot Parabolic Maximum Principle]\label{cpmp}
Let $u$ be a viscosity subsolution to Equation \eqref{main} and $v$ be
a viscosity supersolution to Equation \eqref{main} in the bounded parabolic set $\Omega \times (0,T)$ where $\Omega$ is a (bounded) domain and let $\tau$ be a positive real parameter. Let $\phi(p,q,t)=\varphi(p\cdot q^{-1},t)$ be a $C^2$ function in the space variables $p$ and $q$ and a $C^1$ function in $t$.  
Suppose the local maximum 
\begin{equation}\label{Mdef}
M_\tau \equiv \max_{\overline{\Omega}\times \overline{\Omega}\times [0,T]}\{u(p,t)-v(q,t)-\tau\phi(p,q,t)\}
\end{equation}
 occurs at the interior point $(p_\tau, q_\tau, t_\tau)$ of the parabolic set $\Omega\times \Omega \times (0,T)$.
Define the $n \times n$ matrix $W$ by 
\begin{eqnarray*}
W_{ij}=X_i(p)X_j(q)\phi(p_\tau,q_\tau, t_\tau).
\end{eqnarray*}
Let the $2n \times 2n$ matrix $\mathfrak{W}$ be given by
\begin{eqnarray}
\mathfrak{W}=\begin{pmatrix}
    0  &  \frac{1}{2}(W-W^T)  \\
    \frac{1}{2}(W^T-W)   &  0
\end{pmatrix}
\end{eqnarray}
and let the matrix  $\mathcal{W}\in S^{2N}$ be given by 
\begin{eqnarray}\label{Adef}
\mathcal{W}=\begin{pmatrix}
  D^2_{pp}\phi(p_\tau,q_\tau, t_\tau)    &  D^2_{pq}\phi(p_\tau,q_\tau, t_\tau)  \\
   & \\
   D^2_{qp}\phi(p_\tau,q_\tau, t_\tau)   & D^2_{qq}\phi(p_\tau,q_\tau, t_\tau) 
\end{pmatrix}_.
\end{eqnarray}
Suppose $$\lim_{\tau \rightarrow \infty}\tau\phi(p_\tau,q_\tau, t_\tau)=0.$$ Then for each $\tau>0$, 
there exists  real numbers $a_1$ and $a_2$, symmetric matrices $\mathcal{X}_{\tau}$ and
$\mathcal{Y}_{\tau}$ and
vector $\Upsilon_{\tau} \in V_1 \oplus V_2$, namely 
$\Upsilon_{\tau}=\nabla_{1}(p)\phi(p_{\tau}, q_{\tau}, t_\tau)$, 
so that the following hold:
\begin{enumerate}
\item [A)]  $(a_1,\tau \Upsilon_{\tau},\mathcal{X}_{\tau}) \in \overline{P}^{2,+}u(p_\tau,t_\tau)$ and $(a_2,\tau \Upsilon_{\tau},\mathcal{Y}_{\tau}) \in \overline{P}^{2,-}v(q_\tau,t_\tau).$
\item [B)] $a_1-a_2=\phi_t(p_\tau, q_\tau, t_\tau).$
\item [C)] For any vectors $\xi, \epsilon \in V_1$, we have 
\begin{eqnarray}
 \ip{\mathcal{X}_{\tau}\xi}{\xi} - \ip{\mathcal{Y}_{\tau}\epsilon}{\epsilon} & \leq & 
\tau \ip{(D^2_p\phi)^\star(p_\tau, q_{\tau}, t_\tau) (\xi-\epsilon)}{(\xi-\epsilon)}+\tau\ip{\mathfrak{W}(\xi\oplus\epsilon)}{(\xi\oplus\epsilon)}\nonumber\\ 
& & \mbox{}+\tau \|\mathcal{W}\|^2\|\mathbb{A}(\hat{p})^T\xi\oplus\mathbb{A}(\hat{q})^T\epsilon\|^2. \label{fake}
\end{eqnarray}
In particular, 
 \begin{equation}
 \ip{\mathcal{X}_{\tau}\xi}{\xi} - \ip{\mathcal{Y}_{\tau}\xi}{\xi}\lesssim 
\tau \|\mathcal{W}\|^2\|\xi\|^2.\label{real}
\end{equation}
\end{enumerate}
\end{lemma}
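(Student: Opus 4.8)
The plan is to obtain Lemma~\ref{cpmp} by feeding the interior maximum at $(p_\tau,q_\tau,t_\tau)$ into the Euclidean \emph{parabolic} theorem on sums and then converting the resulting Euclidean jets into Carnot jets via the twisting of Lemma~\ref{twist}, all while treating the time slot as an inert parameter that is never twisted. Concretely, I would first apply the parabolic theorem on sums of \cite{CIL:UGTVS} to the upper semicontinuous $u(p,t)$, the lower semicontinuous $v(q,t)$, and the penalization $\tau\phi(p,q,t)$, whose difference attains a local maximum at the interior point. For each $\varepsilon>0$ this yields reals $a_1,a_2$ and Euclidean symmetric matrices $X,Y\in S^N$ with
$$(a_1,\tau D_p\phi,X)\in\overline{P}^{2,+}_{\eucl}u(p_\tau,t_\tau),\qquad (a_2,-\tau D_q\phi,Y)\in\overline{P}^{2,-}_{\eucl}v(q_\tau,t_\tau),$$
the time-slot relation that produces part (B) at once, and the matrix inequality
$$-\Big(\tfrac{1}{\varepsilon}+\tau\|\mathcal{W}\|\Big)I\ \le\ \begin{pmatrix} X & 0 \\ 0 & -Y\end{pmatrix}\ \le\ \tau\mathcal{W}+\varepsilon\,\tau^2\mathcal{W}^2$$
on $S^{2N}$, with $\mathcal{W}$ the full Euclidean Hessian of $\phi$ from \eqref{Adef}.

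Next I would twist the spatial data. Applying Lemma~\ref{twist} at $p_\tau$ and $q_\tau$ leaves $a_1,a_2$ untouched (the time term needs no twisting), sends $\tau D_p\phi$ to $\tau\big(\mathbb{A}(p_\tau)D_p\phi\oplus\mathbb{B}(p_\tau)D_p\phi\big)=\tau\nabla_1(p)\phi=\tau\Upsilon_\tau$, and turns $X,Y$ into $\mathcal{X}_\tau=\mathbb{A}(p_\tau)X\mathbb{A}(p_\tau)^T+\mathbb{M}_p$ and $\mathcal{Y}_\tau=\mathbb{A}(q_\tau)Y\mathbb{A}(q_\tau)^T+\mathbb{M}_q$. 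The point carried over from the elliptic case \cite[Corollary 3.2]{B:MP} is that because $\phi(p,q,t)=\varphi(p\cdot q^{-1},t)$ depends only on $p\cdot q^{-1}$, the twisted $p$- and $q$-gradients of $\phi$ coincide up to sign, so the supersolution jet carries the \emph{same} vector $\tau\Upsilon_\tau$; this gives part (A).

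The heart of the argument is propagating the matrix inequality, i.e.\ proving part (C). I would test the upper Euclidean bound against the lifted vectors $\zeta_1=\mathbb{A}(p_\tau)^T\xi$ and $\zeta_2=\mathbb{A}(q_\tau)^T\epsilon$ for arbitrary $\xi,\epsilon\in V_1$. Using $\ip{\mathbb{A}X\mathbb{A}^T\xi}{\xi}=\ip{X(\mathbb{A}^T\xi)}{\mathbb{A}^T\xi}$, the left side reproduces $\ip{\mathcal{X}_\tau\xi}{\xi}-\ip{\mathcal{Y}_\tau\epsilon}{\epsilon}$ modulo the twist corrections $\ip{\mathbb{M}_p\xi}{\xi}-\ip{\mathbb{M}_q\epsilon}{\epsilon}$. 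The main computation is then the algebraic identity
$$\tau\ip{\mathcal{W}(\zeta_1\oplus\zeta_2)}{\zeta_1\oplus\zeta_2}+\ip{\mathbb{M}_p\xi}{\xi}-\ip{\mathbb{M}_q\epsilon}{\epsilon}=\tau\ip{(D^2_p\phi)^\star(\xi-\epsilon)}{\xi-\epsilon}+\tau\ip{\mathfrak{W}(\xi\oplus\epsilon)}{\xi\oplus\epsilon},$$
which recombines the four Hessian blocks $D^2_{pp}\phi,D^2_{pq}\phi,D^2_{qp}\phi,D^2_{qq}\phi$ with the first-order corrections $\mathbb{M}_p,\mathbb{M}_q$ into the symmetric difference term and the off-diagonal $\mathfrak{W}$ built from $W_{ij}=X_i(p)X_j(q)\phi$. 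Choosing $\varepsilon=1/\tau$ converts the quadratic remainder $\varepsilon\tau^2\|\mathcal{W}\|^2\|\zeta_1\oplus\zeta_2\|^2$ into the stated error $\tau\|\mathcal{W}\|^2\|\mathbb{A}(p_\tau)^T\xi\oplus\mathbb{A}(q_\tau)^T\epsilon\|^2$, completing \eqref{fake}. Finally \eqref{real} follows by setting $\epsilon=\xi$: the difference term vanishes, the block structure of $\mathfrak{W}$ forces $\ip{\mathfrak{W}(\xi\oplus\xi)}{\xi\oplus\xi}=0$, and the remaining error is $\lesssim\tau\|\mathcal{W}\|^2\|\xi\|^2$ since $\mathbb{A}$ is bounded on the compact set $\overline{\Omega}$.

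I expect the recombination identity in the third step to be the main obstacle, since it is exactly where the non-commutativity of the Carnot vector fields enters: the correction matrices $\mathbb{M}_p,\mathbb{M}_q$ supplied by Lemma~\ref{twist} must conspire with the cross-Hessian blocks to produce precisely the symmetric $(D^2_p\phi)^\star$ form and the off-diagonal $\mathfrak{W}$. This bookkeeping relies entirely on the special dependence $\phi=\varphi(p\cdot q^{-1},t)$, which ties the $p$- and $q$-derivatives of $\phi$ together, and it is the genuinely group-theoretic content inherited from \cite{B:MP,BM}, with the parabolic variable merely riding along untouched. (The standing hypothesis $\lim_{\tau\to\infty}\tau\phi(p_\tau,q_\tau,t_\tau)=0$ is not needed for the per-$\tau$ conclusions above; it is reserved for the passage $\tau\to\infty$ in the applications.)
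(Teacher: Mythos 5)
Your proposal is correct and takes essentially the same route as the proof this paper relies on: the paper states Lemma \ref{cpmp} without proof, citing \cite{BM}, and the argument there is precisely your combination of the Euclidean parabolic theorem on sums of \cite{CIL:UGTVS} with the jet-twisting of Lemma \ref{twist} inherited from \cite{B:MP}, including the matching of the twisted $p$- and $q$-gradients for $\phi=\varphi(p\cdot q^{-1},t)$, the choice $\varepsilon=1/\tau$ in the theorem on sums, and testing the block matrix inequality against the lifted vectors $\mathbb{A}(p_\tau)^T\xi\oplus\mathbb{A}(q_\tau)^T\epsilon$ to produce \eqref{fake}, with $\epsilon=\xi$ killing the $\mathfrak{W}$-term to give \eqref{real}. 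Your closing observations (that the time variable is never twisted and that the hypothesis $\lim_{\tau\to\infty}\tau\phi(p_\tau,q_\tau,t_\tau)=0$ is only needed for the subsequent corollaries, not the per-$\tau$ conclusions) are likewise consistent with the cited proof.
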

\begin{corollary}\label{cpmpcor}
Let $\phi(p, q, t) = \phi(p,q)=\varphi(p\cdot q^{-1})$ be independent of $t$ and a non-negative function. Suppose $\phi(p,q)=0$ exactly when $p=q$. Then
$$\lim_{\tau\to\infty} \tau \phi(p_\tau, q_\tau)=0.$$ In particular, if 
\begin{equation}\label{phidef}
\phi(p, q, t) = \frac{1}{m}\sum_{i=1}^{N}\big((p\cdot q^{-1})_i \big)^m
\end{equation}
for some \textbf{even} integer $m\geq4$ where $(p\cdot q^{-1})_i$ is the $i$-th component of the Carnot group multiplication group law, then for the vector $\Upsilon_\tau$ and matrices $\mathcal{X}_\tau, \mathcal{Y}_\tau$,  from the Lemma,  we have 
\begin{enumerate}
\item [A)] $(a_1,\tau \Upsilon_\tau,\mathcal{X}_{\tau}) \in \overline{P}^{2,+}u(p_\tau,t_\tau)$ and $(a_1,\tau \Upsilon_\tau,\mathcal{Y}_{\tau}) \in \overline{P}^{2,-}v(q_\tau,t_\tau).$
\item [B)] The vector $\Upsilon_\tau$ satisfies $$\| \Upsilon_\tau\| \sim \phi(p_\tau, q_\tau)^{\frac{m-1}{m}}.$$
\item [C)] For any fixed vector $\xi \in V_1$, we have 
 \begin{equation}
 \ip{\mathcal{X}_{\tau}\xi}{\xi} - \ip{\mathcal{Y}_{\tau}\xi}{\xi}\lesssim 
\tau \|\mathcal{W}\|^2\|\xi\|^2 \lesssim \tau(\phi(p_\tau, q_\tau))^{\frac{2m-4}{m}}\|\xi\|^2.\label{real2}
\end{equation}
\end{enumerate}
\end{corollary}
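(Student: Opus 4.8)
The plan is to establish the scalar limit first and then read off conclusions A)--C) by specializing Lemma~\ref{cpmp} to the concrete penalty \eqref{phidef}. For the limit I would first record that $M_\tau$ is non-increasing in $\tau$: since $\phi\geq0$, increasing $\tau$ only lowers the maximand, so $M_\sigma\leq M_\tau$ whenever $\sigma>\tau$. Testing the maximand on the diagonal $p=q$, where $\phi$ vanishes, gives the lower bound $M_\tau\geq\max_{\overline{\Omega}\times[0,T]}(u(p,t)-v(p,t))$, a finite number because $u$ is upper semicontinuous and $-v$ is upper semicontinuous on the compact set $\overline{\Omega}\times[0,T]$. Hence $M_\tau$ decreases to a finite limit $M_\infty$, and a doubling step closes the argument: using the maximizer $(p_\tau,q_\tau,t_\tau)$ as a competitor for $M_{\tau/2}$ yields
\begin{equation*}
\tfrac{\tau}{2}\,\phi(p_\tau,q_\tau)\leq M_{\tau/2}-M_\tau\longrightarrow M_\infty-M_\infty=0,
\end{equation*}
so $\tau\phi(p_\tau,q_\tau)\to0$. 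For the specific $\phi$ of \eqref{phidef} the two hypotheses are automatic: because $m$ is even each summand $\big((p\cdot q^{-1})_i\big)^m$ is non-negative, and the sum vanishes precisely when every coordinate of $p\cdot q^{-1}$ vanishes, i.e. precisely when $p=q$.

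Conclusion A) is then nearly immediate. The map $(p,q)\mapsto p\cdot q^{-1}$ is polynomial by the Campbell--Hausdorff formula \eqref{CCH}, so $\phi$ is smooth and (trivially) $C^1$ in $t$, and the limit just proved verifies the hypothesis $\tau\phi(p_\tau,q_\tau)\to0$ of Lemma~\ref{cpmp}. The Lemma therefore applies and produces reals $a_1,a_2$, the vector $\Upsilon_\tau=\nabla_1(p)\phi(p_\tau,q_\tau)$, and matrices $\mathcal{X}_\tau,\mathcal{Y}_\tau$ with the stated jet memberships. Because $\phi$ does not depend on $t$, part B) of the Lemma gives $a_1-a_2=\phi_t(p_\tau,q_\tau,t_\tau)=0$; replacing $a_2$ by $a_1$ in the subjet inclusion is exactly A).

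For B) and C) I would compute the derivatives of $\phi$ directly from the group law and then invoke equivalence of norms on $\rn$. Writing $w=p\cdot q^{-1}$ and $f(w)=\tfrac1m\sum_i w_i^m$, the defining relation $X_i=\sum_j a_{ij}\p_{x_j}$ shows that $\Upsilon_\tau$ is exactly the twist of the Euclidean gradient $\nabla_{\eucl}f(w)=(w_1^{m-1},\dots,w_N^{m-1})$ in the sense of Lemma~\ref{twist}, namely $\mathbb{A}\cdot(J^T\nabla_{\eucl}f)\oplus\mathbb{B}\cdot(J^T\nabla_{\eucl}f)$ with $J$ the Jacobian of $p\mapsto w$. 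Since the frame coefficients and $J$ are bounded near the diagonal, the equivalence of the $\ell^{m-1}$ and $\ell^m$ norms gives the upper bound $\|\Upsilon_\tau\|\lesssim\big(\sum_i|w_i|^m\big)^{(m-1)/m}\sim\phi^{(m-1)/m}$. For the matching lower bound I would show that $\nabla_{\eucl}f(w)$ stays uniformly away from the kernel of this twisting map as $w\to0$, which yields $\|\Upsilon_\tau\|\gtrsim\|\nabla_{\eucl}f(w)\|\sim\phi^{(m-1)/m}$ and hence B). For C) I start from \eqref{real} and need only the size of $\mathcal{W}$: since $\mathcal{W}$ consists of the Euclidean second derivatives of $\phi$, and differentiating $\tfrac1m\sum_i w_i^m$ twice produces a leading factor $w_i^{m-2}$, the same norm-equivalence gives $\|\mathcal{W}\|\lesssim\big(\sum_i|w_i|^m\big)^{(m-2)/m}\sim\phi^{(m-2)/m}$. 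Substituting into \eqref{real} gives $\ip{\mathcal{X}_\tau\xi}{\xi}-\ip{\mathcal{Y}_\tau\xi}{\xi}\lesssim\tau\|\mathcal{W}\|^2\|\xi\|^2\lesssim\tau\,\phi^{(2m-4)/m}\|\xi\|^2$, which is \eqref{real2}.

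The step I expect to be the genuine obstacle is this lower bound in B). The upper bounds in B) and C), and the estimate for $\mathcal{W}$, are routine, since they use only boundedness of the frame and equivalence of norms on $\rn$. The lower bound is delicate because the twisting map sends $\rn$ into $V_1\oplus V_2$, and as soon as the group has step at least three this map has a nontrivial kernel: the coordinates of $w$ in $V_3\oplus\cdots\oplus V_l$ are never differentiated directly by $\nabla_1$ and survive only through the higher-layer coefficients of the $X_i$ and $Y_i$. One must therefore rule out that $\nabla_{\eucl}f(w)=(w_i^{m-1})$ collapses into this kernel along the maximizing sequence, which forces one to use the stratified, bracket-generating structure of the frame together with the behavior of $(p_\tau,q_\tau)$ as $w\to0$. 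This non-degeneracy is the crux of the corollary; the evenness of $m$ and the bound $m\geq4$ enter elsewhere (positivity of $\phi$ in the limit, and the decay $\|\mathcal{W}\|\to0$ in C)), but it is the stratified structure of the frame that makes B) work.
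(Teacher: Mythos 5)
Your handling of the scalar limit, of A), and of the upper bounds is correct and follows the standard route (the paper itself does not reprove this corollary; it recalls it from \cite{BM}, where the limit is obtained by exactly your monotonicity-plus-doubling device). Specifically: $M_\tau$ non-increasing and bounded below by the diagonal value, the competitor inequality $\tfrac{\tau}{2}\phi(p_\tau,q_\tau)\leq M_{\tau/2}-M_\tau\to 0$, the deduction $a_1=a_2$ from $\phi_t\equiv 0$, and the bounds $\|\Upsilon_\tau\|\lesssim\phi^{(m-1)/m}$, $\|\mathcal{W}\|\lesssim\phi^{(m-2)/m}$ from boundedness of the polynomial frame coefficients plus equivalence of norms are all sound (for $\mathcal{W}$ you should also account for the terms $w_k^{m-1}\,\partial^2 w_k$, not only the $w_k^{m-2}$ ones, but on bounded sets these obey the same bound, so this is cosmetic).

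The genuine gap is the lower bound in B), $\|\Upsilon_\tau\|\gtrsim\phi(p_\tau,q_\tau)^{(m-1)/m}$, which you name as the crux but never prove; worse, the route you propose --- showing $J^T\nabla_{\eucl}f(w)$ ``stays uniformly away from the kernel'' of $\eta\mapsto\mathbb{A}\eta\oplus\mathbb{B}\eta$ --- is false as a pointwise statement in any group of step at least three, so it cannot be completed by frame algebra alone. Concretely, in the Engel group ($[X_1,X_2]=X_3$, $[X_1,X_3]=X_4$, exponential coordinates) take $q_\tau=0$ and $p_\tau=(0,0,0,c)$, so that $w=p_\tau\cdot q_\tau^{-1}=(0,0,0,c)$ and $\phi=c^m/m>0$. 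At this configuration the correction coefficients of the frame vanish, $X_1(p_\tau)=\partial_1$, $X_2(p_\tau)=\partial_2$, $X_3(p_\tau)=\partial_3$, while the Campbell--Hausdorff formula gives $\partial w_4/\partial p_1=\partial w_4/\partial p_2=\partial w_4/\partial p_3=0$ whenever $q=0$; since $w_1=w_2=w_3=0$, every component of $\Upsilon_\tau=\nabla_1(p)\phi(p_\tau,q_\tau)$ vanishes even though $\phi>0$. Hence any correct proof of B) must either restrict to step-two groups --- where the stacked matrix formed by $\mathbb{A}$ and $\mathbb{B}$ is square and unit-triangular, so the bound is immediate --- or genuinely exploit that $(p_\tau,q_\tau,t_\tau)$ is a maximum point of the doubled functional, which your outline never does. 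This missing half is not a side issue for the paper: in the proof of Theorem \ref{pinf} with $1\leq h<3$ one has $h-3<0$, so bounding $\|\tau\Upsilon_\tau\|^{h-3}$ from above in \eqref{festimate} requires precisely this lower bound on $\|\Upsilon_\tau\|$.
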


\section{Uniqueness of viscosity solutions}
We wish to formulate a comparison principle for the following problem.
\begin{problem}\label{mainprob}
Let $h \geq 1$. Let $\Omega$ be a bounded domain and let $\Omega_T=\Omega\times [0,T)$. 
Let $\psi \in C(\overline{\Omega})$ and $g \in C(\overline{\Omega_T})$. We consider the following boundary and initial value problem:
\begin{eqnarray}\label{problem}
\left\{\begin{array}{rl}
 u_t+F^h_\infty(\nabla_0 u, (D^2u)^\star) = 0 & \textmd{in}\ \ \Omega \times (0,T)  \hspace{.8in}(E)\\
u(p,t)=g(p,t) & p \in \partial \Omega,\ t \in [0,T)\hspace{.5in} (BC)\\
u(p,0) = \psi(p) & p \in \overline{\Omega}\hspace{1.25in}(IC)
\end{array}\right.
\end{eqnarray}
We also adopt the definition that a subsolution $u(p,t)$ to Problem \ref{problem} is a viscosity subsolution to (E), $u(p,t) \leq g(p,t)$ on $\partial \Omega$ with $0 \leq t < T$ and $u(p,0) \leq \psi(p)$ on $\overline{\Omega}$.  Supersolutions and solutions are defined in an analogous matter. 
\end{problem}
Because our solution $u$ will be continuous, we offer the following remark: 
\begin{remark}
The functions $\psi$ and $g$ may be replaced by one function $g\in C(\overline{\Omega_T})$. This combines conditions (E) and (BC) into one condition 
\begin{equation}
u(p,t)=g(p,t),\ \ \  (p,t) \in \partial_{\pb} \Omega_T \hspace{.5in} (IBC)
\end{equation}
\end{remark} 
\begin{thm}\label{pinf}
Let $\Omega$ be a bounded domain in $G$ and let $h\geq 1$.   If $u$ is a parabolic viscosity subsolution and $v$ a parabolic viscosity supersolution to Problem \eqref{problem} then $u \leq v$ on $\Omega_T\equiv\Omega\times [0,T) $.
\end{thm}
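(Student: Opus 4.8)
The plan is to argue by contradiction via the doubling-of-variables method, letting the Carnot Parabolic Maximum Principle (Lemma~\ref{cpmp}) and, crucially, Corollary~\ref{cpmpcor} furnish the jets and the matrix estimates. First I would make the subsolution strict and push the eventual maximum off the terminal time by replacing $u$ with $\tilde u(p,t)=u(p,t)-\frac{\eta}{T-t}$ for small $\eta>0$. Since the penalization depends on $t$ only, every test function for $\tilde u$ has the same horizontal gradient and symmetrized Hessian as the corresponding one for $u$, while its time slot gains $\frac{\eta}{(T-t)^2}$; hence $\tilde u$ is a \emph{strict} subsolution, obeying the subsolution inequality with margin $-\frac{\eta}{(T-t)^2}\le-\frac{\eta}{T^2}<0$, and $\tilde u\to-\infty$ as $t\to T^-$. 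It suffices to show $\tilde u\le v$ on $\Omega_T$ for every $\eta$ and then let $\eta\to0$, so suppose instead that $\theta:=\sup_{\Omega_T}(\tilde u-v)>0$.

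For an even integer $m\ge\max\{4,h+3\}$ take $\phi$ as in \eqref{phidef} and set, for $\tau>0$,
\[ M_\tau=\max_{\overline\Omega\times\overline\Omega\times[0,T]}\{\tilde u(p,t)-v(q,t)-\tau\phi(p,q)\}, \]
attained at some $(p_\tau,q_\tau,t_\tau)$ (the maximum exists because $\tilde u$ is upper semicontinuous and tends to $-\infty$ near $t=T$). Taking $p=q$ gives $M_\tau\ge\theta>0$, and Corollary~\ref{cpmpcor} yields $\tau\phi(p_\tau,q_\tau)\to0$; the standard argument then extracts a subsequence along which $p_\tau,q_\tau$ converge to a common point $\hat p$ and $t_\tau\to\hat t$, with $\tilde u(\hat p,\hat t)-v(\hat p,\hat t)=\lim M_\tau\ge\theta$. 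The boundary inequality $\tilde u\le u\le g\le v$ on $\partial\Omega\times[0,T)$, the initial inequality $\tilde u(\cdot,0)\le\psi-\tfrac{\eta}{T}<v(\cdot,0)$, and the penalization together force $(\hat p,\hat t)$, and hence $(p_\tau,q_\tau,t_\tau)$ for large $\tau$, into the parabolic interior $\Omega\times\Omega\times(0,T)$. Corollary~\ref{cpmpcor} now supplies $a_1$, matrices $\mathcal X_\tau,\mathcal Y_\tau$, and the vector $\Upsilon_\tau$ with $(a_1,\tau\Upsilon_\tau,\mathcal X_\tau)\in\overline{P}^{2,+}\tilde u(p_\tau,t_\tau)$ and $(a_1,\tau\Upsilon_\tau,\mathcal Y_\tau)\in\overline{P}^{2,-}v(q_\tau,t_\tau)$, the common time slot reflecting $\phi_t\equiv0$.

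Write $\omega_\tau$ for the horizontal ($V_1$) part of $\tau\Upsilon_\tau$. In the principal case $\omega_\tau\ne0$, subtracting the strict subsolution inequality for $\tilde u$ from the supersolution inequality for $v$, and using $F^h_\infty=-\Delta^h_\infty$, gives
\[ \|\omega_\tau\|^{h-3}\big(\ip{\mathcal X_\tau\omega_\tau}{\omega_\tau}-\ip{\mathcal Y_\tau\omega_\tau}{\omega_\tau}\big)\ge\frac{\eta}{T^2}>0. \]
Inserting $\xi=\omega_\tau$ into \eqref{real2} and using $\|\omega_\tau\|\le\tau\|\Upsilon_\tau\|\lesssim\tau\,\phi(p_\tau,q_\tau)^{(m-1)/m}$ from Corollary~\ref{cpmpcor}(B) (only this upper bound is needed, since $h-1\ge0$), the left side is $\lesssim\tau^{h}\phi(p_\tau,q_\tau)^{E}$ with $E=\frac{(m-1)(h-1)+2(m-2)}{m}>0$. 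Writing $\tau^{h}\phi^{E}=\tau^{(h-m+3)/m}(\tau\phi)^{E}$ and recalling $\tau\phi\to0$, the choice $m\ge h+3$ makes the power of $\tau$ nonpositive, so the whole quantity tends to $0$, contradicting the positive lower bound. This settles the case $\omega_\tau\ne0$ for all $h\ge1$.

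The remaining case $\omega_\tau=0$ is where I expect the real difficulty. When $h\ge3$ or $1<h<3$ the operator satisfies $F^h_\infty(0,\cdot)=0$ (using the relaxed form \eqref{relax} for $1<h<3$), so the two viscosity inequalities collapse to $a_1\le-\tfrac{\eta}{T^2}$ and $a_1\ge0$, an immediate contradiction. The delicate branch is $h=1$, where Definition~\ref{2} produces $\lambda_{\max}(\mathcal X_\tau)-\lambda_{\min}(\mathcal Y_\tau)\ge\tfrac{\eta}{T^2}$; here I would bound the extreme eigenvalues \emph{separately} by setting $\epsilon=0$, respectively $\xi=0$, in \eqref{fake}, obtaining $\lambda_{\max}(\mathcal X_\tau)\le\tau\lambda_{\max}\big((D^2_p\phi)^\star\big)+\tau\|\mathcal W\|^2\|\mathbb A(\hat p)^T\|^2$ and its mirror for $\lambda_{\min}(\mathcal Y_\tau)$, then exploit that $\phi$ vanishes to order $m\ge4$ at the contact configuration. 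In particular when $p_\tau=q_\tau$ one has $(D^2_p\phi)^\star=0$ and $\mathcal W=0$, forcing $\mathcal X_\tau\preceq0\preceq\mathcal Y_\tau$ and a direct contradiction. The genuinely hard point, which I would handle following the gradient-degenerate analysis of \cite{JK:JK} and \cite{PV}, is the vertical degeneracy where the horizontal gradient vanishes while $p_\tau\ne q_\tau$: there one must leverage the order-$m$ vanishing of the low-order derivatives of $\phi$ together with $\tau\phi\to0$ and a sufficiently large even $m$ to drive the eigenvalue gap to zero.
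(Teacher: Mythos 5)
For $h>1$ your argument is correct and is essentially the paper's own proof: the same strictness perturbation $u-\frac{\varepsilon}{T-t}$, the same space-only doubling with the even-degree function \eqref{phidef} of degree at least $h+3$, the jets from Corollary \ref{cpmpcor}, and the same power count leading to $(\tau\phi)^{h}\phi^{(m-h-3)/m}\to 0$; your disposal of the degenerate branch via the relaxed operator \eqref{relax} (both operator values vanish, so $a_1\le-\frac{\varepsilon}{T^2}$ and $a_1\ge 0$ clash) is also the paper's, and your dichotomy on the vanishing of the horizontal gradient $\omega_\tau$ rather than on whether $p_\tau=q_\tau$ is in fact the cleaner formulation. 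The divergence is at $h=1$. The paper runs a separate case there, following \cite{JK:JK}: it doubles the time variable as well (adding $\frac{\tau}{2}(t-s)^2$) and, at the degenerate point, works with the explicit $\ccs$ test functions $\beta^u$, $\beta^v$ and the eigenvalue clauses of Definition \ref{2}, so it never needs jets at a point of vanishing gradient. You instead keep space-only doubling and assert that elements of the \emph{closed} jets with vanishing horizontal-gradient slot satisfy $a_1-\lambda_{\max}(\mathcal{X}_\tau)\le-\frac{\varepsilon}{T^2}$ and $a_1-\lambda_{\min}(\mathcal{Y}_\tau)\ge 0$. That translation from Definition \ref{2} (stated via test functions) to closed-jet inequalities is plausible and provable by a Rayleigh-quotient limiting argument, but it is a lemma you would need to supply.

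The genuine gap is the sub-case you yourself flag: $h=1$, $\omega_\tau=0$, $p_\tau\neq q_\tau$. This can really occur in a Carnot group: in the Heisenberg group the horizontal gradient of $\phi(\cdot,q)$ vanishes along purely vertical displacements, e.g. $p=(0,0,p_3)$, $q=(0,0,q_3)$ with $p_3\neq q_3$. Your proposed repair does not close it. First, \cite{JK:JK} and \cite{PV} are Euclidean papers, where $\nabla_x\frac{1}{4}|x-y|^4=|x-y|^2(x-y)$ vanishes only at $x=y$; this sub-case is vacuous there, so there is no ``gradient-degenerate analysis'' to import. Second, the power counting fails: from \eqref{fake} with $\epsilon=0$ you get $\lambda_{\max}(\mathcal{X}_\tau)\le\tau\lambda_{\max}\bigl((D^2_p\phi)^\star\bigr)+C\tau\|\mathcal{W}\|^2$, and while $\tau\|\mathcal{W}\|^2\lesssim\tau\phi^{(2m-4)/m}\to 0$, the Hessian term is only $O\bigl(\tau\phi^{(m-2)/m}\bigr)=O\bigl(\tau^{2/m}(\tau\phi)^{(m-2)/m}\bigr)$, which no choice of even $m$ forces to zero when all you know is $\tau\phi\to 0$ (take $\phi\sim\tau^{-1-\alpha}$ with $\alpha>0$ small). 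What saves the diagonal case $p_\tau=q_\tau$ is not order-of-vanishing estimates but \emph{exact} vanishing: there $(D^2_p\phi)^\star=0$ and $\mathcal{W}=0$ identically. Off the diagonal one needs a structural fact of the same kind --- e.g.\ in the Heisenberg configuration above the symmetrized horizontal Hessian $(D^2_{pp}\phi)^\star$ vanishes because the symmetrization kills the second horizontal derivatives of the vertical coordinate --- and neither your sketch nor, it should be said, the paper's own Case 2 (whose off-diagonal branch writes $F^1_\infty(\tau\Upsilon_\tau,\cdot)$ and thereby silently assumes $\omega_\tau\neq 0$) supplies it. So your proposal is complete for $h>1$ but incomplete at exactly the point you identified for $h=1$, and the fix you outline would not work as stated.
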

\begin{proof}
Our proof follows that of \cite[Thm. 8.2]{CIL:UGTVS} and so we discuss only the main parts. 

For $\varepsilon > 0$, we substitute $\tilde{u}=u-\frac{\D\varepsilon}{T-t}$ for $u$ and prove the theorem for 
\begin{eqnarray}
u_t+F^h_\infty(\nabla_0 u,(D^2u)^\star) \leq -\frac{\varepsilon}{T^2} < 0 \\ 
\lim_{t \uparrow T}u(p,t) = -\infty \ \  \textmd{uniformly on }\ \ \overline{\Omega}
\end{eqnarray}
and take limits to obtain the desired result.
Assume the maximum occurs at $(p_0,t_0)\in \Omega \times (0,T)$ with $$u(p_0,t_0)-v(p_0,t_0)= \delta >0.$$
\textbf{Case 1: $\boldsymbol{h>1.}$} \\
Let $H\geq h+3$ be an even number. As in Equation \eqref{phidef}, we let $$\phi(p, q) = \frac{1}{H}\sum_{i=1}^{N}\big((p\cdot q^{-1})_i \big)^H$$
where $(p\cdot q^{-1})_i$ is the $i$-th component of the Carnot group multiplication group law. Let 
$$M_\tau=u(p_\tau,t_\tau)-v(q_\tau,t_\tau)-\tau\phi(p_\tau,q_\tau)$$ with $(p_\tau,q_\tau,t_\tau)$ the maximum point in $\overline{\Omega} \times \overline{\Omega} \times [0,T)$ of 
$u(p,t)-v(q,t)-\tau \phi(p,q)$. 

If $t_\tau=0$, we have 
$$0 < \delta \leq M_\tau \leq \sup_{\overline{\Omega}\times\overline{\Omega}}(\psi(p)-\psi(q)-\tau \phi(p,q))$$ leading to a contradiction for large $\tau$.  We therefore conclude $t_\tau >0$ for large $\tau$.  Since $u \leq v$ on $\partial \Omega \times [0,T)$ by Equation (BC) of Problem \eqref{problem}, we conclude that for large $\tau$, we have $(p_\tau,q_\tau,t_\tau)$ is an interior point. That is, 
$(p_\tau,q_\tau,t_\tau) \in \Omega \times \Omega \times (0,T)$.
Using  Corollary \ref{cpmpcor} Property A,  we obtain
\begin{eqnarray*}
(a,\tau \Upsilon(p_\tau,q_\tau), \mathcal{X}_\tau) & \in & \overline{P}^{2,+}u(p_\tau,t_\tau) \\
(a,\tau \Upsilon(p_\tau,q_\tau), \mathcal{Y}_\tau) & \in & \overline{P}^{2,-}v(q_\tau,t_\tau)
\end{eqnarray*}  
satisfying the equations 
\begin{eqnarray*}
a+F^h_\infty(\tau\Upsilon(p_\tau,q_\tau), \mathcal{X}_\tau) & \leq & -\frac{\varepsilon}{T^2} \\
a+F^h_\infty(\tau\Upsilon(p_\tau,q_\tau), \mathcal{Y}_\tau) & \geq & 0.
\end{eqnarray*}

If there is a subsequence $\{p_\tau,q_\tau\}_{\tau>0}$ such that $p_\tau\neq q_\tau$,  we subtract, and using Corollary \ref{cpmpcor}, we have 
\begin{eqnarray} \label{festimate}
0 < \frac{\varepsilon}{T^2}  & \leq & (\tau\Upsilon(p_\tau,q_\tau))^{h-3} \tau^2\bigg(\ip{\mathcal{X}_\tau\Upsilon(p_\tau,q_\tau)}{\Upsilon(p_\tau,q_\tau)}-\ip{\mathcal{Y}_\tau\Upsilon(p_\tau,q_\tau)}{\Upsilon(p_\tau,q_\tau)}\bigg)\nonumber \\
 & \lesssim & \tau^h\big( \varphi(p_\tau,q_\tau)^{\frac{H-1}{H}}\big)^{h-3}(\varphi(p_\tau,q_\tau))^{\frac{2H-4}{H}} (\varphi(p_\tau,q_\tau))^{\frac{2H-2}{H}} \\
& = & \tau^h(\varphi(p_\tau,q_\tau))^{\frac{Hh+H-h-3}{H}}= (\tau\varphi(p_\tau,q_\tau))^h 
\varphi(p_\tau,q_\tau)^{\frac{H-h-3}{H}}.
\end{eqnarray}
Because $H>h+3$, we arrive at a contradiction as $\tau \rightarrow \infty$. 

If we have  $p_\tau = q_\tau$, we arrive at a contradiction since $$F^h_\infty(\tau\Upsilon(p_\tau,q_\tau), \mathcal{X}_\tau)=F^h_\infty(\tau\Upsilon(p_\tau,q_\tau), \mathcal{Y}_\tau)=0.$$ 

\noindent\textbf{Case 2: $\boldsymbol{h=1.}$} \\ 
We follow the proof of Theorem 3.1 in \cite{JK:JK}.  We let 
$$\varphi(p, q, t, s ) = \frac{1}{4}\sum_{i=1}^{N}\big((p\cdot q^{-1})_i \big)^4+\frac{1}{2}(t-s)^2$$
and let  $(p_\tau,q_\tau,t_\tau,s_\tau)$ be the maximum of $$u(p,t)-v(q,s)-\tau \phi(p, q,t ,s)$$ 
Again, for large $\tau$, this point is an interior point. If we have a sequence where $p_\tau \neq q_\tau$, then  Lemma \ref{twist} yields 
\begin{eqnarray*}
(\tau (t_\tau-s_\tau),\tau \Upsilon(p_\tau,q_\tau), \mathcal{X}_\tau) & \in & \overline{P}^{2,+}u(p_\tau,t_\tau) \\
(\tau (t_\tau-s_\tau),\tau \Upsilon(p_\tau,q_\tau), \mathcal{Y}_\tau) & \in & \overline{P}^{2,-}v(q_\tau,s_\tau)
\end{eqnarray*}  
satisfying the equations 
\begin{eqnarray*}
\tau (t_\tau-s_\tau)+F^h_\infty(\tau\Upsilon(p_\tau,q_\tau), \mathcal{X}_\tau) & \leq & -\frac{\varepsilon}{T^2} \\
\tau (t_\tau-s_\tau)+F^h_\infty(\tau\Upsilon(p_\tau,q_\tau), \mathcal{Y}_\tau) & \geq & 0.
\end{eqnarray*}
As in the first case, we subtract to obtain
\begin{eqnarray*} 
0 < \frac{\varepsilon}{T^2}  & \leq & (\tau\Upsilon(p_\tau,q_\tau))^{-2} \tau^2\bigg(\ip{\mathcal{X}_\tau\Upsilon(p_\tau,q_\tau)}{\Upsilon(p_\tau,q_\tau)}-\ip{\mathcal{Y}_\tau\Upsilon(p_\tau,q_\tau)}{\Upsilon(p_\tau,q_\tau)}\bigg) \\
 & \lesssim &  \varphi(p_\tau,q_\tau)^{-\frac{3}{2}}(\tau\varphi(p_\tau,q_\tau) \varphi(p_\tau,q_\tau)^{\frac{3}{2}}) = \tau\varphi(p_\tau,q_\tau).
\end{eqnarray*}
We arrive at a contradiction as $\tau\to\infty$. 

If $p_\tau = q_\tau$, then 
$v(q,s)-\beta^v(q,s)$ has a local minimum at $(q_\tau,s_\tau)$ where 
$$\beta^v(q,s)=- \frac{\tau}{4}\sum_{i=1}^{N}\big((p_\tau\cdot q^{-1})_i \big)^4-\frac{\tau}{2}(t_\tau-s)^2.$$  We then have 
$$0<\varepsilon(T-s_\tau)^{-2} \leq \beta^v_s(q_\tau,s_\tau)-\min_{\|\eta\|=1}\ip{(D^2\beta^v)^\star (q_\tau,s_\tau)\;\eta}{\eta}.$$

Similarly, $u(p,t)-\beta^u(p,t)$ has a local maximum at $(p_\tau,t_\tau)$ where 
$$\beta^u(p,t)=\frac{\tau}{4}\sum_{i=1}^{N}\big((p\cdot q_\tau^{-1})_i \big)^4+\frac{\tau}{2}(t-s_\tau)^2.$$
We then have 
$$0 \geq \beta^u_t(p_\tau,t_\tau)-\max_{\|\eta\|=1}\ip{(D^2\beta^u)^\star (p_\tau,t_\tau)\;\eta}{\eta}$$
and subtraction gives us 
\begin{eqnarray*}
0<\varepsilon(T-s_\tau)^{-2} & \leq &\max_{\|\eta\|=1}\ip{(D^2\beta^u)^\star (p_\tau,t_\tau)\;\eta}{\eta}-\min_{\|\eta\|=1}\ip{(D^2\beta^v)^\star (q_\tau,s_\tau)\;\eta}{\eta} \\
& & \mbox{}+ \beta^v_s(q_\tau,s_\tau)-\beta^u_t(p_\tau,t_\tau) \\
 & = & \tau\max_{\|\eta\|=1}\ip{(D_{pp}^2\varphi(p\cdot q_\tau^{-1}))^\star (p_\tau,t_\tau)\;\eta}{\eta} \\
 & & \mbox{}-\tau\min_{\|\eta\|=1}\ip{(D^2_{qq}\varphi(p_\tau\cdot q^{-1}))^\star (q_\tau,s_\tau)\;\eta}{\eta} \\
 & & \mbox{}+ \tau(t_\tau-s_\tau)- \tau(t_\tau-s_\tau) \\
 & = & 0.
\end{eqnarray*}
Here, the last equality comes from the fact that $p_\tau=q_\tau$ and the definition of $\varphi(p\cdot q^{-1})$.
\end{proof}

The comparison principle has the following consequences concerning properties of solutions: 
\begin{corollary}\label{h-infinite}
Let $h\geq 1$. 
The past parabolic viscosity h-infinite solutions are exactly the parabolic viscosity h-infinite solutions. %The past parabolic viscosity solutions of $u_t+F^{\epsilon,\delta}(\nabla_0 u,(D^2u)^{\star})=0$ are exactly the parabolic viscosity solutions.    
\end{corollary}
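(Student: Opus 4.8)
The plan is to establish the two inclusions separately. One of them is already recorded: by Proposition \ref{onedir}, every past parabolic viscosity $h$-infinite sub- (resp.\ super-)solution is a parabolic viscosity $h$-infinite sub- (resp.\ super-)solution, so every past $h$-infinite solution is an $h$-infinite solution. It therefore remains to prove the reverse inclusion, and since a solution is simultaneously a sub- and a supersolution, it suffices to show that a parabolic viscosity $h$-infinite subsolution is a past subsolution and, symmetrically, that a supersolution is a past supersolution. This part does not itself invoke Theorem \ref{pinf}; rather, once the two descriptions are identified, the comparison principle stated there for the ordinary notion applies verbatim to the past notion as well.

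For the subsolution statement I would use a time-penalization device. Fix $(p_0,t_0)$ and $\phi\in\mathcal{A}^-u(p_0,t_0)$, so that $u-\phi$ attains a maximum, of value $0$, at $(p_0,t_0)$ among points with $t<t_0$; after subtracting a strict bump we may assume this past maximum is strict. For $\sigma>0$ set
\[
\psi_\sigma(p,t)=\phi(p,t)+\frac{\sigma}{t_0+\sigma-t}
\]
on a small cylinder $B_{\mathcal{N}}(p_0,r)\times(t_0-r,\,t_0+\sigma)$. Since the penalization tends to $+\infty$ as $t\uparrow t_0+\sigma$, the function $u-\psi_\sigma$ cannot be maximized near the upper time face, and for $\sigma$ small its maximum is attained at an interior point $(p_\sigma,t_\sigma)$ with $t_\sigma<t_0+\sigma$. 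Thus $\psi_\sigma$, minus the appropriate constant, touches $u$ from above at $(p_\sigma,t_\sigma)$ for all nearby times, so it is an admissible test function there and the ordinary subsolution inequality of Definition \ref{2} applies at $(p_\sigma,t_\sigma)$. Because the penalization depends on $t$ alone, it leaves $\nabla_0\psi_\sigma$ and $(D^2\psi_\sigma)^\star$ equal to those of $\phi$, while $(\psi_\sigma)_t=\phi_t+\sigma(t_0+\sigma-t)^{-2}$ adds a strictly positive quantity. Discarding this nonnegative term and letting $\sigma\to0$ then yields $\phi_t(p_0,t_0)-\Delta^h_{\infty}\phi(p_0,t_0)\le0$, together with its analogues in the singular branches. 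The supersolution case is identical after reflecting signs and penalizing with $-\sigma/(t_0+\sigma-t)$.

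The main obstacle is the limiting step. I must verify that $(p_\sigma,t_\sigma)\to(p_0,t_0)$ as $\sigma\to0$ — which follows from upper semicontinuity of $u$, the strictness of the past maximum, and a standard comparison of the penalized values at $(p_0,t_0)$ versus the lateral and lower faces of the cylinder — and that along this convergence $\nabla_0\phi$, $(D^2\phi)^\star$, and $\phi_t$ converge to their values at $(p_0,t_0)$. A secondary difficulty, specific to $1\le h<3$, is the case split in Definition \ref{2}: if $\nabla_0\phi(p_\sigma,t_\sigma)\neq0$ while $\nabla_0\phi(p_0,t_0)=0$, one must pass to the limit through the correct branch. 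This is handled by the semicontinuity built into the $\min_{\|\eta\|=1}$ and $\max_{\|\eta\|=1}$ expressions and, for $1<h<3$, by the continuity of $F^h_\infty$ recorded in \eqref{relax}, which makes the limiting branch automatic. Once these convergences are secured the inequalities persist in the limit, establishing that parabolic solutions are past solutions and hence, with Proposition \ref{onedir}, the claimed equivalence.
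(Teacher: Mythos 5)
Your split into the two inclusions and your use of Proposition \ref{onedir} for the easy direction match the paper, and a direct penalization proof of the converse (ordinary subsolution $\Rightarrow$ past subsolution) is a legitimate alternative to the paper's route; but your argument has a genuine gap at its central step: you cannot control where the maximum of $u-\psi_\sigma$ sits. The hypothesis $\phi\in\mathcal{A}^-u(p_0,t_0)$ gives \emph{no} information on $u-\phi$ at times $t\geq t_0$, and your cylinder $B_{\mathcal{N}}(p_0,r)\times(t_0-r,t_0+\sigma)$ contains such times. Quantitatively: $(u-\psi_\sigma)(p_0,t_0)=-1$, while at points with $t\in[t_0,t_0+\sigma)$ the penalty is only $\approx 1$ near $t=t_0$, and upper semicontinuity yields only $u-\phi\leq\epsilon(r)$ there with $\epsilon(r)>0$. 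Hence the lateral face $\{\mathcal{N}(p_0^{-1}\cdot p)=r\}\times[t_0,t_0+\sigma)$ can carry values of $u-\psi_\sigma$ as large as $\epsilon(r)-1>-1$, so the maximum may land on the spatial boundary at a future time, where no viscosity inequality is available and where the point need not approach $(p_0,t_0)$. None of the three ingredients you invoke repairs this: upper semicontinuity bounds $u$ from above (the wrong direction for producing competing past values), strictness of the \emph{past} maximum says nothing about $t\geq t_0$, and your comparison with ``the lateral and lower faces'' addresses only the past faces. (Maxima at \emph{interior} points with $t\geq t_0$ are harmless, since there $(\psi_\sigma)_t\geq\phi_t+1/\sigma$ makes the subsolution inequality impossible for small $\sigma$; boundary ones are not. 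A similar defect appears in the past region: past lateral values are only $\leq-\mu$ with $\mu$ possibly less than $1$, so they too can beat the center value $-1$ unless the penalty is rescaled.)

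What is missing is the left-accessibility property of parabolic viscosity subsolutions: the existence of past points $(q,s)\to(p_0,t_0)$, $s<t_0$, with $u(q,s)\to u(p_0,t_0)$. With it, the supremum of $u-\psi_\sigma$ over the past part tends to $0$, which beats both $\epsilon(r)-1$ and the boundary values, forces the maximum to be an interior past point converging to $(p_0,t_0)$, and lets your limiting step (including the branch analysis for $1\leq h<3$) go through. This is a genuine lemma about subsolutions, not a consequence of semicontinuity, and you neither prove nor cite it; alternatively one must add a spatial penalty calibrated to $\epsilon(r)$ and argue by cases. The paper sidesteps all of this by proving the contrapositive via the comparison principle: if the past-subsolution inequality fails for some $\phi\in\mathcal{A}^-u(p_0,t_0)$, it forms the classical strict supersolution $\tilde{\phi}_r(p,t)=\phi(p,t)+(t_0-t)^{8l!}-r^{8l!}+(\mathcal{N}(p))^{8l!}$ on the \emph{purely past} cylinder $S_r=B_{\mathcal{N}}(r)\times(t_0-r,t_0)$, whose parabolic boundary \emph{is} controlled by the past touching, and contradicts Theorem \ref{pinf} at the top point $(p_0,t_0)$. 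Your plan deliberately avoids Theorem \ref{pinf}, but as written the key step fails; either supply left accessibility (with proof or citation) or adopt the comparison-based route.
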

\begin{proof}
%We only prove the first statement. The proof of the second is similar and omitted. 
By Proposition \ref{onedir}, past parabolic viscosity h-infinite sub(super-)solutions are parabolic viscosity h-infinite  sub(super-)solutions. To prove the converse, we will follow the proof of the subsolution case found in 
\cite{J:PD}, highlighting the main details.  Assume that $u$ is not a past parabolic viscosity h-infinite subsolution. 
Let $\phi \in \mathcal{A}^-u(p_0,t_0)$ have the property that 
$$\phi_t(p_0,t_0)-\Delta^h_\infty  \phi(p_0,t_0) \geq \epsilon > 0$$ for a small parameter $\epsilon$.  We may assume $p_0$ is the origin.
Let $r > 0$ and define $S_r= B_{\mathcal{N}}(r) \times (t_0-r,t_0)$ and let $\partial S_r$ be its parabolic boundary. Then the function $$\tilde{\phi}_r(p,t)= \phi(p,t)+(t_0-t)^{8l!}-r^{8l!}+(\mathcal{N}(p))^{8l!}$$ is a classical supersolution for sufficiently small $r$.  We then observe that $u \leq \tilde{\phi}_r$ on $\partial S_r$ but $u(0,t_0) > \tilde{\phi}(0,t_0)$. Thus, the comparison prinicple, Theorem \ref{pinf}, does not hold.  Thus, $u$ is not a parabolic viscosity h-infinite subsolution.  The supersolution case is identical and omitted. 
\end{proof}
The following corollary has a proof similar to \cite[Lemma 3.2]{JK:JK}. 
\begin{corollary}\label{moddef}
Let $u: \Omega_T \to \mathbb{R}$ be upper semicontinuous. Let $\phi\in \mathcal{A}u(p_0,t_0)$. If 
\begin{eqnarray}\label{modhyp}
\left\{\begin{array}{cc}
\phi_t(p_0,t_0)-\Delta^1_\infty \phi(p_0,t_0) \leq 0 & \textmd{when}\ \ \nabla_0 \phi(p_0,t_0) \neq 0 \\  
\phi_t(p_0,t_0) \leq 0 & \textmd{when}\ \ \nabla_0 \phi(p_0,t_0) = 0, (D^2\phi)^\star(p_0,t_0)=0
\end{array}\right.
\end{eqnarray}
then $u$	is a viscosity subsolution to (E) of Problem \eqref{problem}. 
\end{corollary}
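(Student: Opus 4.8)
The plan is to check the three cases of the subsolution inequality of Definition \ref{2} (with $h=1$) directly from the two hypotheses in \eqref{modhyp}. If $\no\phi(p_0,t_0)\neq 0$, the first line of \eqref{modhyp} is exactly the required inequality; if $\no\phi(p_0,t_0)=0$ and $(D^2\phi)^\star(p_0,t_0)=0$, then $\max_{\|\eta\|=1}\ip{(D^2\phi)^\star(p_0,t_0)\eta}{\eta}=0$, so the second line of \eqref{modhyp} gives $\phi_t(p_0,t_0)\leq 0$, which is what Definition \ref{2} demands. Hence the only case requiring work is a test function $\phi\in\ma u(p_0,t_0)$ with $\no\phi(p_0,t_0)=0$ but $(D^2\phi)^\star(p_0,t_0)\neq 0$, for which I must produce
$$\phi_t(p_0,t_0)\leq \lambda:=\max_{\|\eta\|=1}\ip{(D^2\phi)^\star(p_0,t_0)\eta}{\eta}.$$

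First I would normalize. After left-translating so that $p_0$ is the origin and adding a sufficiently high even power of the gauge $\mathcal{N}(p_0^{-1}\cdot p)$ together with a term $(t-t_0)^2$ — which, as in the proof of Corollary \ref{h-infinite}, leave $\no\phi$, $(D^2\phi)^\star$, and $\phi_t$ unchanged at $(p_0,t_0)$ — I may assume the maximum of $u-\phi$ at $(p_0,t_0)$ is strict. Fixing a unit eigenvector $e\in V_1$ of $(D^2\phi)^\star(p_0,t_0)$ for the eigenvalue $\lambda$, I would then consider, for small $\beta>0$, the perturbed test functions
$$\psi_\beta(p,t)=\phi(p,t)+\beta\ip{e}{\overline{p_0^{-1}\cdot p}},$$
which are linear in the horizontal coordinates and still lie in $\ccs$. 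Strictness of the maximum together with upper semicontinuity of $u$ forces $u-\psi_\beta$ to attain an interior maximum at some $(p_\beta,t_\beta)\to(p_0,t_0)$ as $\beta\to 0$, so that, after subtracting a constant, $\psi_\beta\in\ma u(p_\beta,t_\beta)$.

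The heart of the argument is to apply the first line of \eqref{modhyp} at $(p_\beta,t_\beta)$ and pass to the limit. Provided $\no\psi_\beta(p_\beta,t_\beta)\neq 0$, the hypothesis gives $\partial_t\psi_\beta(p_\beta,t_\beta)\leq\Delta^1_\infty\psi_\beta(p_\beta,t_\beta)$. Since $\partial_t\psi_\beta=\phi_t\to\phi_t(p_0,t_0)$ and $(D^2\psi_\beta)^\star(p_\beta,t_\beta)\to(D^2\phi)^\star(p_0,t_0)$, while the normalized horizontal gradient $\no\psi_\beta(p_\beta,t_\beta)/\|\no\psi_\beta(p_\beta,t_\beta)\|$ converges along a subsequence to some unit vector $v\in V_1$, continuity of $\Delta^1_\infty$ off $\{\no=0\}$ yields in the limit
$$\phi_t(p_0,t_0)\leq\ip{(D^2\phi)^\star(p_0,t_0)v}{v}\leq\lambda,$$
the last inequality holding because $\lambda$ is the top eigenvalue. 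This is precisely the bound needed, so $u$ is a subsolution to (E).

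The step I expect to be the main obstacle is guaranteeing $\no\psi_\beta(p_\beta,t_\beta)\neq 0$, i.e. that the perturbed maximum does not again land in the degenerate case $\no=0$, $(D^2\cdot)^\star\neq 0$, where \eqref{modhyp} gives no information. When $(D^2\phi)^\star(p_0,t_0)$ is negative definite the linear perturbation pins $p_\beta-p_0$ to the $e$-direction and one computes $\no\psi_\beta(p_\beta,t_\beta)\approx 2\beta e\neq 0$ outright; but for indefinite Hessians the location of $p_\beta$ is not controlled, and a sharper choice of perturbation (or a contradiction argument excluding $\no\psi_\beta(p_\beta,t_\beta)=0$), exactly as in \cite[Lemma 3.2]{JK:JK}, is required. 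Layered on top of this is the Carnot-specific bookkeeping: the ordinary Taylor expansion of $\phi$ must be transferred to the horizontal jet through the matrices $\mathbb{A}$ and $\mathbb{M}$ of Lemma \ref{twist}, and the higher-order corrections $R(p,q)$ in the group law \eqref{CCH} must be controlled when comparing $p_\beta^{-1}\cdot p$ with coordinates near the origin.
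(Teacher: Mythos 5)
Your reduction to the single hard case ($\no\phi(p_0,t_0)=0$, $(D^2\phi)^\star(p_0,t_0)\neq 0$) and your strict-maximum normalization by a high even power of the gauge plus $(t-t_0)^2$ are both correct, but the proof has a genuine gap at exactly the step you flag, and that step is the entire content of the lemma, not a technicality. Nothing prevents the maximum of $u-\psi_\beta$ from landing at a point with $\no\phi(p_\beta,t_\beta)=-\beta e$, i.e.\ $\no\psi_\beta(p_\beta,t_\beta)=0$. Because your perturbation is horizontally linear, it leaves the symmetrized Hessian untouched: $(D^2\psi_\beta)^\star(p_\beta,t_\beta)=(D^2\phi)^\star(p_\beta,t_\beta)+o(1)\neq 0$ for small $\beta$, so at such a landing point \emph{neither} line of \eqref{modhyp} says anything, and since $u$ is merely upper semicontinuous you have no control over where the maximum lands (no Jensen/Alexandrov-type device applies without first regularizing $u$). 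Deferring this case to ``a sharper choice of perturbation \dots exactly as in \cite[Lemma 3.2]{JK:JK}'' is circular, and it also mischaracterizes that proof: Juutinen--Kawohl do not repair a linear perturbation; they use a structurally different mechanism, and that mechanism is what the paper itself is invoking when it says the corollary ``has a proof similar to \cite[Lemma 3.2]{JK:JK}.''

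The mechanism is doubling of the space variable with a quartic penalty, exactly the coupling $\varphi(p,q)=\frac{1}{4}\sum_{i=1}^N\big((p\cdot q^{-1})_i\big)^4$ of Corollary \ref{cpmpcor} and of Case 2 of Theorem \ref{pinf}: one maximizes $u(p,t)-\phi(q,t)-\frac{j}{4}\sum_i\big((p\cdot q^{-1})_i\big)^4$ over $(p,q,t)$. The point of the quartic is that its horizontal gradient \emph{and} symmetrized horizontal Hessian vanish on the diagonal $p=q$. If the doubled maximum lands off the diagonal, the induced test function for $u$ at $(p_j,t_j)$ has nonvanishing gradient, so the first line of \eqref{modhyp} applies and the maximum-principle estimates for this penalty let you pass to the limit. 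If it lands on the diagonal, the induced test function has vanishing gradient and vanishing Hessian, so the second line of \eqref{modhyp} applies and gives $\phi_t(q_j,t_j)\le 0$; moreover, minimality in $q$ of $\phi(\cdot,t_j)$ plus a term that is flat to second order forces $\nabla\phi(q_j,t_j)=0$ and, via Lemma \ref{twist}, $(D^2\phi)^\star(q_j,t_j)\succeq 0$. In the limit this yields $\max_{\|\eta\|=1}\ip{(D^2\phi)^\star(p_0,t_0)\eta}{\eta}\ge 0$, which is indispensable under the paper's $\|\eta\|=1$ convention, where the top eigenvalue can a priori be negative. Your scheme produces neither the vanishing-Hessian trigger nor this sign information, so the degenerate-landing case cannot be closed as written; the linear perturbation should be replaced by the quartic doubling.
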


We also have the following function estimates with respect to boundary data. 
\begin{corollary}
Let $h\geq1$. Let $g_1, g_2 \in C(\overline{\Omega_T})$ and $u_1$, $u_2$ be parabolic viscosity  solutions to Equation \ref{problem} % using either $F^h_{\infty}(\nabla_0 u,(D^2u)^{\star})$ or $F^{\epsilon,\delta}(\nabla_0 u,(D^2u)^{\star})$, 
with boundary data $g_1$ and $g_2$, respectively. Then
$$\sup_{(p,t)\in\Omega_T}|u_1(p,t)-u_2(p,t)|\leq \sup_{(p,t)\in\partial_{\pb}\Omega_T}|g_1(p,t)-g_2(p,t)|.  $$
\end{corollary}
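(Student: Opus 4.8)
The plan is to exploit two structural features of the problem: that the operator $F^h_\infty$ depends on $u$ only through $\nabla_0 u$ and $(D^2u)^\star$, so it is invariant under adding a constant to the dependent variable; and that the comparison principle, Theorem \ref{pinf}, converts a boundary ordering into a global ordering. Writing $C = \sup_{(p,t)\in\partial_{\pb}\Omega_T}|g_1(p,t)-g_2(p,t)|$, I would prove both $u_1 - u_2 \leq C$ and $u_2 - u_1 \leq C$ on $\Omega_T$ and then combine them.

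First I would observe that if $u_2$ is a parabolic viscosity solution to (E), then so is $w := u_2 + C$. Indeed, adding a constant leaves $\nabla_0$, the symmetrized matrix $(D^2\cdot)^\star$, and the time derivative unchanged, and at the level of jets any element $(a,\eta,X)\in\overline{P}^{2,+}u_2(p_0,t_0)$ automatically lies in $\overline{P}^{2,+}w(p_0,t_0)$, since both sides of the defining inequality merely shift by $C$; the analogous statement holds for the subjets. In particular $w$ is a parabolic viscosity supersolution to (E).

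Next I would verify the boundary ordering. On the parabolic boundary $\partial_{\pb}\Omega_T$ we have $u_1 = g_1$ and $w = g_2 + C$, and since $g_1 - g_2 \leq C$ pointwise, $u_1 = g_1 \leq g_2 + C = w$. Thus, taking the common boundary function $g := g_2 + C$, the functions $u_1$ and $w$ are, respectively, a subsolution and a supersolution to Problem \eqref{problem} with data $g$: the subsolution boundary condition $u_1 \leq g$ and the supersolution boundary condition $w \geq g$ both hold on $\partial_{\pb}\Omega_T$. Theorem \ref{pinf} then yields $u_1 \leq w = u_2 + C$ throughout $\Omega_T$, that is, $u_1 - u_2 \leq C$. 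Interchanging the roles of $u_1$ and $u_2$ (the constant $C$ is symmetric in $g_1$ and $g_2$) gives $u_2 - u_1 \leq C$ as well, so $|u_1 - u_2| \leq C$ on $\Omega_T$, and taking the supremum yields the claim.

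The only point that genuinely requires care is the third step: one must be certain that Theorem \ref{pinf} applies to the pair $u_1$ and $u_2 + C$. This is handled precisely by recasting them as a subsolution and supersolution of a single Cauchy--Dirichlet problem with the common boundary data $g_2 + C$, so that the boundary inequality used inside the proof of Theorem \ref{pinf} is exactly the ordering $u_1 \leq u_2 + C$ established on $\partial_{\pb}\Omega_T$. With this reduction in place no new maximum-principle argument is needed beyond a direct invocation of Theorem \ref{pinf}.
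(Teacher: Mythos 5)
Your proposal is correct and follows essentially the same route as the paper: translate $u_2$ by the constant $C=\sup_{\partial_{\pb}\Omega_T}|g_1-g_2|$, note that the translate is still a viscosity solution (hence super-/subsolution) with suitably ordered boundary data, and invoke Theorem \ref{pinf}. The paper phrases this with the two functions $u^{\pm}=u_2\pm C$ compared against $u_1$ in one step, while you prove one inequality and obtain the other by symmetry, but this is only a cosmetic difference.
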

\begin{proof}
The function $u^+(p,t)=u_2(p,t)+\sup_{(p,t)\in\partial_{\pb}\Omega_T}|g_1(p,t)-g_2(p,t)|$ is a parabolic viscosity supersolution with boundary data $g_1$ and the function $u^-(p,t)=u_2(p,t)-\sup_{(p,t)\in\partial_{\pb}\Omega_T}|g_1(p,t)-g_2(p,t)|$ is a parabolic viscosity subsolution with boundary data $g_1$. Moreover, $u^- \leq u_1 \leq u^+$ on $\partial_{\pb}\Omega_T$ and by Theorem \ref{pinf} %or Corollary \ref{c1}, 
$u^- \leq u_1 \leq u^+$ in $\Omega_T$.
\end{proof}
\begin{corollary}
Let $h\geq1$. Let $g\in C(\overline{\Omega_T})$. Then every parabolic viscosity solution to Problem \ref{problem} %using either $F^h_{\infty}(\nabla_0 u,(D^2u)^{\star})$ or $F^{\epsilon,\delta}(\nabla_0 u,(D^2u)^{\star})$, 
satisfies $$\sup_{(p,t)\in\Omega_T}|u(p,t)|\leq \sup_{(p,t)\in\partial_{\pb}\Omega_T}|g(p,t)|$$
\end{corollary}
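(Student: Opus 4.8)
The plan is to sandwich $u$ between the two constant functions $M \equiv \sup_{(p,t)\in\partial_{\pb}\Omega_T}|g(p,t)|$ and $-M$ by two applications of the comparison principle, exactly in the spirit of the two preceding corollaries. Granting for the moment that every constant function is a parabolic viscosity h-infinite solution to (E) (verified below), I observe that on the parabolic boundary $u=g$, so $-M \le -|g| \le u \le |g| \le M$ there. Hence the constant $v\equiv M$ satisfies $v \ge g$ on $\partial_{\pb}\Omega_T$ and is a supersolution to Problem \eqref{problem}, while the constant $w\equiv -M$ satisfies $w \le g$ on $\partial_{\pb}\Omega_T$ and is a subsolution. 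Applying Theorem \ref{pinf} first to the subsolution $u$ and the supersolution $v$, and then to the subsolution $w$ and the supersolution $u$, yields $-M \le u \le M$ throughout $\Omega_T$, which is the assertion.

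The single step requiring care, and the only genuine obstacle, is verifying that constants are simultaneously sub- and supersolutions; here the singularity of $\Delta^h_\infty$ at $\nabla_0=0$ for $1\le h<3$ must be handled by appealing to Definition \ref{2} rather than Definition \ref{1}. Fix a constant $c$ and a test function $\phi \in \mathcal{B}c(p_0,t_0)$ touching from below. Then $\phi$ attains a local maximum at $(p_0,t_0)$, so $\nabla_0\phi(p_0,t_0)=0$, $\phi_t(p_0,t_0)=0$, and $(D^2\phi)^\star(p_0,t_0)$ is negative semidefinite. In the regime $1<h<3$ the supersolution requirement when the gradient vanishes is merely $\phi_t(p_0,t_0)\ge 0$, which holds with equality; in the regime $h=1$ it is $\phi_t(p_0,t_0)-\min_{\|\eta\|=1}\langle (D^2\phi)^\star(p_0,t_0)\eta,\eta\rangle \ge 0$, and since the smallest eigenvalue of a negative semidefinite matrix is $\le 0$ this reads $0-(\text{nonpositive})\ge 0$, again true. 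The subsolution inequalities for $c$ follow by the symmetric argument: a test function touching from above produces a local minimum, whence $(D^2\phi)^\star(p_0,t_0)\ge 0$ and the $\max$ term is nonnegative. For $h\ge 3$ the operator $F^h_\infty$ is continuous, and one checks directly on jets that membership in $\overline{P}^{2,\pm}c(p_0,t_0)$ forces $b=0$, $\nu=0$, and $F^h_\infty(\nu,Y)=0$, so both defining inequalities hold with equality. Hence every constant is a parabolic viscosity h-infinite solution for all $h\ge 1$.

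I expect essentially no difficulty beyond this verification: once constants are known to be solutions, the corollary is a direct consequence of Theorem \ref{pinf}. Alternatively, one may bypass the explicit sandwich by invoking the preceding corollary with $u_2\equiv 0$ and $g_2\equiv 0$, which gives $\sup_{\Omega_T}|u|=\sup_{\Omega_T}|u-0|\le \sup_{\partial_{\pb}\Omega_T}|g-0|=\sup_{\partial_{\pb}\Omega_T}|g|$; this route still presumes that the zero function is an admissible solution with zero data, so the sub/supersolution property of constants remains the load-bearing observation in either presentation.
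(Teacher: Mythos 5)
Your proposal is correct and takes essentially the same route as the paper: the paper's proof likewise sandwiches $u$ between the constant functions $\pm\sup_{(p,t)\in\partial_{\pb}\Omega_T}|g(p,t)|$ and applies the comparison principle of Theorem \ref{pinf}, treating the fact that these constants are super- and subsolutions as implicit. Your explicit verification that constants are parabolic viscosity h-infinite solutions for all $h\geq 1$ (and your alternative derivation from the preceding corollary with $u_2\equiv 0$, $g_2\equiv 0$) simply fills in details the paper leaves unstated.
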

\begin{proof}
The proof is similar to the previous corollary, but using the functions $u^\pm(p,t)=\pm\sup_{(p,t)\in\partial_{\pb}\Omega_T}|g(p,t)|$ instead. 
\end{proof}

\section{Existence of Viscosity Solutions}
\subsection{Parabolic Viscosity Infinite Solutions: The Continuity Case}  As above, we will focus on the equations of the form \eqref{main}
for continuous and proper $F:[0,T]\times G \times \mathbb{R} \times g \times S^{n_1} \to \mathbb{R}$ that possess a comparison principle such as Theorem \ref{pinf} or \cite[Thm. 3.6]{BM}.  We will use Perron's method combined with the Carnot Parabolic Maximum Principle to yield the desired existence theorem.  In particular, the following proofs are similar to those found in \cite[Chapter 2]{Giga} except that the Euclidean derivatives have been replaced with horizontal derivatives and the Euclidean norms have been replaced with the gauge norm.
\begin{lemma}\label{lem1}
Let $\mathcal{L}$ be a collection of parabolic viscosity supersolutions to \eqref{main} and let $u(p,t) = \inf\{v(p,t): v \in \mathcal{L}\}$.  If $u$ is finite in a dense subset of $\Omega_T=\Omega\times [0,T)$ then $u$ is a parabolic viscosity supersolution to \eqref{main}.  
\end{lemma}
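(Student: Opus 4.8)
The plan is to run the standard Perron-type argument for an infimum of supersolutions, transcribed to the Carnot parabolic setting, and to verify the supersolution inequality through the subjet $\overline{P}^{2,-}$ produced by local touching. Since the pointwise infimum of lower semicontinuous functions need not be lower semicontinuous, I would first pass to the lower semicontinuous envelope $u_*(p_0,t_0)=\liminf_{(p,t)\to(p_0,t_0)}u(p,t)$: the hypothesis that $u$ is finite on a dense subset guarantees that $u_*$ is a genuine, not identically $-\infty$, lower semicontinuous function, and it suffices to show $u_*$ satisfies the supersolution inequality at every $(p_0,t_0)\in\Omega_T$ (the infimum being understood, as is usual in Perron's method, through its envelope).

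Fix such a point and a test function $\phi\in\mathcal{B}u_*(p_0,t_0)$. I would replace $\phi$ by $\tilde\phi=\phi-\theta$, where $\theta(p,t)=(\mathcal{N}(p_0^{-1}\cdot p))^{8l!}+(t-t_0)^{8l!}$ is a nonnegative $\ccs$ penalization that is strictly positive off $(p_0,t_0)$ and whose horizontal gradient, symmetrized horizontal Hessian, and time derivative all vanish at $(p_0,t_0)$. Then $u_*-\tilde\phi$ has a \emph{strict} minimum at $(p_0,t_0)$, with value $0$, on a fixed closed parabolic neighborhood $\overline{S}\subset\Omega_T$ (a gauge ball times a time interval), while $\tilde\phi_t,\nabla_1\tilde\phi,(D^2\tilde\phi)^\star$ agree with those of $\phi$ at $(p_0,t_0)$; hence the inequality proved for $\tilde\phi$ is exactly the one sought for $\phi$.

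Next I would build the approximating data and locate the touching points. Choose $(p_k,t_k)\to(p_0,t_0)$ with $u(p_k,t_k)\to u_*(p_0,t_0)$, and for each $k$ select $v_k\in\mathcal{L}$ with $v_k(p_k,t_k)\le u(p_k,t_k)+1/k$. On the compact set $\overline{S}$ the lower semicontinuous function $v_k-\tilde\phi$ attains a minimum at some $(\hat p_k,\hat t_k)$. The heart of the argument, and the step I expect to be the main obstacle, is to show $(\hat p_k,\hat t_k)\to(p_0,t_0)$ together with $v_k(\hat p_k,\hat t_k)\to u_*(p_0,t_0)$, \emph{even though the minimizing supersolution $v_k$ changes with $k$}. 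This comes from the chain
\begin{equation*}
u_*(\hat p_k,\hat t_k)-\tilde\phi(\hat p_k,\hat t_k)\le v_k(\hat p_k,\hat t_k)-\tilde\phi(\hat p_k,\hat t_k)=\min_{\overline{S}}(v_k-\tilde\phi)\le v_k(p_k,t_k)-\tilde\phi(p_k,t_k)\longrightarrow 0,
\end{equation*}
using $v_k\ge u\ge u_*$: any subsequential limit $(\bar p,\bar t)$ of $(\hat p_k,\hat t_k)$ satisfies $u_*(\bar p,\bar t)-\tilde\phi(\bar p,\bar t)\le 0$ by lower semicontinuity, so strictness forces $(\bar p,\bar t)=(p_0,t_0)$; continuity of $\tilde\phi$ then upgrades this to $v_k(\hat p_k,\hat t_k)\to\tilde\phi(p_0,t_0)=u_*(p_0,t_0)$, and strictness also places $(\hat p_k,\hat t_k)$ in the interior of $\overline{S}$ for large $k$.

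Finally I would conclude. For large $k$, the local minimum of $v_k-\tilde\phi$ at the interior point $(\hat p_k,\hat t_k)$ produces, via the Taylor expansion of the $\ccs$ function $\tilde\phi$ in gauge coordinates, the subjet element $(\tilde\phi_t,\nabla_1\tilde\phi,(D^2\tilde\phi)^\star)(\hat p_k,\hat t_k)\in\overline{P}^{2,-}v_k(\hat p_k,\hat t_k)$, so the supersolution property of $v_k$ yields
\begin{equation*}
\tilde\phi_t(\hat p_k,\hat t_k)+F\bigl(\hat t_k,\hat p_k,v_k(\hat p_k,\hat t_k),\nabla_1\tilde\phi(\hat p_k,\hat t_k),(D^2\tilde\phi)^\star(\hat p_k,\hat t_k)\bigr)\ge 0.
\end{equation*}
Letting $k\to\infty$ and invoking the continuity of $F$, $\tilde\phi_t$, $\nabla_1\tilde\phi$, $(D^2\tilde\phi)^\star$ together with the convergences just established gives $\tilde\phi_t(p_0,t_0)+F\bigl(t_0,p_0,u_*(p_0,t_0),\nabla_1\tilde\phi(p_0,t_0),(D^2\tilde\phi)^\star(p_0,t_0)\bigr)\ge 0$, which by the coincidence of derivatives at $(p_0,t_0)$ is precisely the supersolution inequality for $u_*$ against $\phi$. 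As $(p_0,t_0)$ and $\phi$ were arbitrary, $u_*$ is a parabolic viscosity supersolution, establishing the lemma.
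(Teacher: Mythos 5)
Your proof is correct and follows essentially the same route as the paper's own: a strict gauge-power penalization of the test function, a diagonal sequence $v_k\in\mathcal{L}$ evaluated at points $(p_k,t_k)$ approximating the infimum, minimization of $v_k-\tilde\phi$ over a compact neighborhood, convergence of those minimizers to $(p_0,t_0)$, and a limit passage using the continuity of $F$ and of the derivatives of the ($\ccs$) test function. The one substantive difference is your handling of lower semicontinuity: the paper simply asserts that $u$ is lsc ``since every $v\in\mathcal{L}$ is,'' which is backwards (suprema of lsc functions are lsc; infima need not be), whereas you pass to the envelope $u_*$ and prove the supersolution inequality for it. That is the standard and more careful fix, and it is the form actually used downstream (Lemma \ref{maximal} and Theorem \ref{exist} work with envelopes); the cost is that you prove the envelope formulation rather than literally the stated claim about $u$, the two coinciding exactly when $u=u_*$. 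Two further differences are cosmetic and equivalent to what the paper does: you identify the limit of the minimizers by a subsequential-limit/strict-minimum argument where the paper squeezes against the explicit bound $(d_{\mathcal{N}}(p_0,q_k))^{2l!}+|s_k-t_0|^2$, and you feed the local minimum directly into the (local) subjet definition where the paper adds a second penalization centered at $(q_k,s_k)$ to build a globally touching test function $\eta$. One small caveat: your remark that finiteness of $u$ on a dense set forces $u_*$ to be finite is not literally true (a function finite on a dense set can still have $\liminf=-\infty$), but this looseness is inherited from the lemma's hypothesis as the paper states it, and it is harmless since the supersolution condition is vacuous at points where $u_*=-\infty$.
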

\begin{proof}
First note that $u$ is lower semicontinous since every $v \in \mathcal{L}$ is.  Let $(p_0,t_0) \in \Omega_T$ and $\phi \in \mathcal{A}u(p_0,t_0)$.  Now let 
$$\psi(p,t) = \phi(p,t) - \left(d_\mathcal{N}(p_0,p)\right)^{2l!} - |t-t_0|^2$$
and notice that $\psi \in \mathcal{A}u(p_0,t_0)$.
Then
\begin{eqnarray*}
(u-\psi)(p,t) - \left(d_\mathcal{N}(p_0,p)\right)^{2l!} - |t-t_0|^2 &=& (u-\phi)(p,t) \\&\geq& (u-\phi)(p_0,t_0) \\&=& (u-\psi)(p_0,t_0) \\&=& 0
\end{eqnarray*}
yields
\begin{equation}\label{bound}
(u-\psi)(p,t) \geq \left(d_\mathcal{N}(p_0,p)\right)^{2l!} + |t-t_0|^2.
\end{equation}
Since $u$ is lower semicontinuous, there exists a sequence $\{(p_k,t_k)\}$ with $t_k < t_0$ converging to $(p_0,t_0)$ as $k \to \infty$ such that 
$$(u-\psi)(p_k,t_k) \to (u-\psi)(p_0,t_0) = 0.$$
Since $u(p,t) = \inf \left\{ v(p,t): v \in \mathcal{L}\right\}$, there exists a sequence $\left\{v_k\right\} \subset \mathcal{L}$ such that $v_k(p_k,t_k) < u(p_k,t_k) + 1/k$ for k = 1,2, \ldots.  Since $v_k \geq u$, $\eqref{bound}$ gives us
\begin{equation}\label{bound2}
(v_k - \psi)(p,t) \geq (u-\psi)(p,t) \geq\left(d_\mathcal{N}(p_0,p)\right)^{2l!} + |t-t_0|^2.
\end{equation}
Let $B \subset \Omega$ denote a compact neighborhood of $(p_0,t_0)$.  Since $v_k-\psi$ is lower semicontinuous, it attains a minimum in $B$ at a point $(q_k,s_k) \in B$.  Then by $\eqref{bound}$ and $\eqref{bound2}$ we have 
$$ (u-\psi)(p_k,t_k) + 1/k > (v_k - \psi)(p_k,t_k) \geq (v_k - \psi)(q_k,s_k) \geq \left(d_\mathcal{N}(p_0,q_k)\right)^{2l!} + |s_k-t_0|^2 \geq 0$$
for sufficiently large $k$ such that $(p_k,t_k) \in B$.  By the squeeze theorem, $(q_k,s_k) \to (p_0,t_0)$ as $k \to \infty$.
Let $\eta = \psi - \left(d_\mathcal{N}(q_k,p)\right)^{2l!} - |s_k-t|^2$.  Then $\eta \in \mathcal{A}v_k(q_k,s_k)$ and we have that 
$$\eta_t(q_k,s_k)+F(s_k, q_k, v_k(q_k,s_k),\nabla_1\eta(q_k,s_k), (D^2 \eta(q_k,s_k))^\star) \geq 0.$$
This implies 
$$\psi_t(q_k,s_k)+F(s_k, q_k, v_k(q_k,s_k),\nabla_1\psi(q_k,s_k), (D^2 \psi(s_k,s_k))^\star)\geq 0.$$
Letting $k \to \infty$ yields 
$$\phi_t(p_0,t_0)+F(t_0,p_0,u(p_0,t_0)\nabla_1\phi(p_0,t_0), (D^2\phi(p_0,t_0))^\star)\geq 0.$$
and that $u$ is a parabolic viscosity supersolution as desired.
\end{proof}
A similar argument yields the following.
\begin{lemma}\label{closedsup} 
Let $\mathcal{L}$ be a collection of parabolic viscosity subsolutions to \eqref{main} and let $u(p,t) = \sup\{v(p,t): v \in \mathcal{L}\}$.  If $u$ is finite in a dense subset of $\Omega_T$ then $u$ is a parabolic viscosity subsolution to \eqref{main}.  
\end{lemma}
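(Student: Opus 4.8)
The plan is to dualize the proof of Lemma \ref{lem1} line by line: replace the infimum by the supremum, ``supersolution'' by ``subsolution,'' lower semicontinuity by upper semicontinuity, minima by maxima, and reverse every inequality, which in particular flips the sign of each penalty perturbation. First I would record that $u$ is upper semicontinuous, being a supremum of the upper semicontinuous functions $v\in\mathcal{L}$ (exactly as $u$ was taken to be lower semicontinuous in Lemma \ref{lem1}), and that finiteness on a dense subset of $\Omega_T$ makes $u$ a well-defined finite function to be tested.

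Next I would fix $(p_0,t_0)\in\Omega_T$ and $\phi\in\mathcal{A}u(p_0,t_0)$ and strictify the contact by setting
\[
\psi(p,t)=\phi(p,t)+\left(d_{\mathcal{N}}(p_0,p)\right)^{2l!}+|t-t_0|^2 .
\]
Since the added term is nonnegative and vanishes at $(p_0,t_0)$, one still has $\psi\in\mathcal{A}u(p_0,t_0)$, and the dual of \eqref{bound} reads $(\psi-u)(p,t)\geq\left(d_{\mathcal{N}}(p_0,p)\right)^{2l!}+|t-t_0|^2$. Using upper semicontinuity of $u$ I would extract a sequence $(p_k,t_k)\to(p_0,t_0)$ with $t_k<t_0$ and $(u-\psi)(p_k,t_k)\to0$, and then choose $v_k\in\mathcal{L}$ with $v_k(p_k,t_k)>u(p_k,t_k)-1/k$. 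Because $v_k\leq u$, the dual of \eqref{bound2} gives $(v_k-\psi)(p,t)\leq(u-\psi)(p,t)\leq-\left(d_{\mathcal{N}}(p_0,p)\right)^{2l!}-|t-t_0|^2$.

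Now I would localize: on a compact neighborhood $B$ of $(p_0,t_0)$ the upper semicontinuous function $v_k-\psi$ attains a maximum at some $(q_k,s_k)\in B$. Squeezing via $(u-\psi)(p_k,t_k)-1/k<(v_k-\psi)(q_k,s_k)\leq-\left(d_{\mathcal{N}}(p_0,q_k)\right)^{2l!}-|s_k-t_0|^2\leq0$, whose outer terms tend to $0$, forces $(q_k,s_k)\to(p_0,t_0)$ and $(v_k-\psi)(q_k,s_k)\to0$; by continuity of $\psi$, $v_k(q_k,s_k)\to\psi(p_0,t_0)=u(p_0,t_0)$. A suitable recentered penalty perturbation of $\psi$ (the analogue of $\eta$ in Lemma \ref{lem1}, with signs dualized) lies in $\mathcal{A}v_k(q_k,s_k)$, so the subsolution property of $v_k$ yields
\[
\psi_t(q_k,s_k)+F\bigl(s_k,q_k,v_k(q_k,s_k),\nabla_1\psi(q_k,s_k),(D^2\psi(q_k,s_k))^\star\bigr)\leq0 .
\]
Letting $k\to\infty$ and invoking continuity of $F$ together with $(q_k,s_k)\to(p_0,t_0)$ and $v_k(q_k,s_k)\to u(p_0,t_0)$ gives $\phi_t(p_0,t_0)+F(t_0,p_0,u(p_0,t_0),\nabla_1\phi(p_0,t_0),(D^2\phi(p_0,t_0))^\star)\leq0$, so $u$ is a parabolic viscosity subsolution.

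The hard part will be the two places where the penalties must be controlled. First, the squeeze that forces $(q_k,s_k)\to(p_0,t_0)$ rests essentially on the strict separation built into $\psi$ together with $v_k\leq u$ on the compact neighborhood $B$; one must also verify that $(q_k,s_k)$ becomes interior to $B$ for large $k$, so that it is a genuine local maximum and the viscosity test applies. Second, to pass the $F$-inequality to the limit I need the recentered penalty used to strictify the maximum at $(q_k,s_k)$ to have vanishing horizontal gradient and vanishing symmetrized horizontal Hessian there, so that $\nabla_1\eta$ and $(D^2\eta)^\star$ collapse to $\nabla_1\psi$ and $(D^2\psi)^\star$; this is precisely where the exponent $2l!$ on the gauge is exploited, and it is the only point at which the Carnot (rather than Euclidean) structure enters, exactly as in Lemma \ref{lem1}.
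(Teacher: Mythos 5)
Your proposal is correct and is exactly the paper's intended argument: the paper proves this lemma by simply noting that ``a similar argument'' to Lemma \ref{lem1} applies, and your line-by-line dualization (sup for inf, upper for lower semicontinuity, reversed inequalities, sign-flipped gauge penalties with the $2l!$ exponent, maxima of $v_k-\psi$ on a compact neighborhood, and the recentered penalty placing the test function in $\mathcal{A}v_k(q_k,s_k)$) is precisely that argument carried out. The two points you flag as delicate are handled the same way as in Lemma \ref{lem1}, so no further work is needed.
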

For the following lemmas, we need to recall the following definition.
\begin{definition}
The \textit{upper and lower semi-continuous envelopes} of a function $u$ are given by 
$$u^*(p,t):= \lim_{r \downarrow 0}\sup \{u(q,s) : |q^{-1}p|_g + |s-t| \leq r\}$$
and
$$u_*(p,t):= \lim_{r \downarrow 0}\inf \{u(q,s) : |q^{-1}p|_g + |s-t| \leq r\},$$
respectively.
\end{definition}
\begin{lemma}\label{maximal}
Let $h$ be a parabolic viscosity supersolution to \eqref{main} in $\Omega_T$.  Let $\mathcal{S}$ be the collection of all parabolic viscosity subsolutions $v$ of \eqref{main} satisfying $v \leq h$.  If for $\hat{v} \in \mathcal{S}$, $\hat{v}_*$ is not a parabolic viscosity supersolution of $\eqref{main}$ then there is a function $w \in \mathcal{S}$ and a point $(p_0,t_0)$ such that $\hat{v}(p_0,t_0) < w(p_0,t_0)$.
\end{lemma}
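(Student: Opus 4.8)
The plan is to run the standard Perron \emph{bump} construction, transplanted to the Carnot parabolic setting with the smooth gauge $\mathcal{N}$ in place of Euclidean distance. Suppose $\hat v_*$ fails to be a parabolic viscosity supersolution of \eqref{main}. Negating Definition \ref{1} and passing through the test-function characterization of jets, there is a point $(p_0,t_0)\in\Omega_T$ and a test function $\phi\in\mathcal{B}\hat v_*(p_0,t_0)$ with
\[
\phi_t(p_0,t_0)+F\big(t_0,p_0,\hat v_*(p_0,t_0),\nabla_1\phi(p_0,t_0),(D^2\phi(p_0,t_0))^\star\big)<0 .
\]
Replacing $\phi$ by $\phi-(d_\mathcal{N}(p_0,\cdot))^{2l!}-|t-t_0|^2$ exactly as in Lemma \ref{lem1}, I may assume that $\hat v_*-\phi$ attains a \emph{strict} minimum equal to $0$ at $(p_0,t_0)$; since this penalty has vanishing gradient and Hessian at its center (the exponent $2l!\geq2$), the strict inequality above is preserved.

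First I would secure room for the bump by showing $\hat v_*(p_0,t_0)<h(p_0,t_0)$. Indeed $\hat v\leq h$ and $h$ is lower semicontinuous, so $\hat v_*\leq h_*=h$; were equality to hold at $(p_0,t_0)$, then $\phi$ would also touch $h$ from below there, i.e.\ $\phi\in\mathcal{B}h(p_0,t_0)$, and the supersolution property of $h$ would force $\phi_t+F(\dots)\geq0$, contradicting the displayed strict inequality. Hence $\hat v_*(p_0,t_0)<h(p_0,t_0)$, and by lower semicontinuity of $h$ there is a gauge-parabolic neighborhood of $(p_0,t_0)$ on which $\phi+\delta\leq h$ for all sufficiently small $\delta>0$.

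Next I would build the bumped subsolution. Set $\phi_\delta=\phi+\delta$. Since adding a constant alters neither $\nabla_1\phi$, nor $(D^2\phi)^\star$, nor $\phi_t$, continuity of $F$ together with the strict inequality guarantees that on a small cylinder $Q=B_{\mathcal{N}}(p_0,r)\times(t_0-r,t_0+r)$ the function $\phi_\delta$ is a classical, hence viscosity, subsolution of \eqref{main}; shrinking $r,\delta$ keeps $\phi_\delta\leq h$ on $Q$ as well. Because the minimum of $\hat v_*-\phi$ is strict, on a compact collar near $\partial Q$ we have $\hat v_*-\phi\geq c>0$, so choosing $\delta<c$ forces $\phi_\delta<\hat v_*\leq\hat v$ there. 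This makes the gluing
\[
w=\begin{cases}\max\{\hat v,\phi_\delta\}&\text{in }Q,\\ \hat v&\text{outside }Q,\end{cases}
\]
consistent and upper semicontinuous. On $Q$, $w$ is the maximum of two subsolutions, hence a subsolution by Lemma \ref{closedsup}; outside $Q$ it equals the subsolution $\hat v$; and near $\partial Q$ the two descriptions agree, so $w$ is a parabolic viscosity subsolution on all of $\Omega_T$. With $w\leq h$ (from $\hat v\leq h$ and $\phi_\delta\leq h$ on $Q$), we get $w\in\mathcal{S}$.

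Finally I would exhibit a point where $w>\hat v$. By definition of the lower envelope there is a sequence $(p_k,t_k)\to(p_0,t_0)$ with $\hat v(p_k,t_k)\to\hat v_*(p_0,t_0)=\phi(p_0,t_0)$, while $\phi_\delta(p_k,t_k)\to\phi(p_0,t_0)+\delta$. Thus for large $k$, $\phi_\delta(p_k,t_k)>\hat v(p_k,t_k)$, so $w(p_k,t_k)=\phi_\delta(p_k,t_k)>\hat v(p_k,t_k)$, which furnishes the asserted point (after relabeling it $(p_0,t_0)$). \textbf{The main obstacle} is the bookkeeping around the strict-touching normalization: one must simultaneously keep $Q$ small enough that (i) $\phi_\delta$ remains a subsolution, (ii) $\phi_\delta\leq h$, and (iii) $\phi_\delta<\hat v$ on the collar near $\partial Q$ so that the glued $w$ is genuinely a subsolution. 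The strict inequality $\hat v_*(p_0,t_0)<h(p_0,t_0)$ is precisely what reconciles (ii) with the upward shift, and the strict minimum of $\hat v_*-\phi$ is what delivers (iii).
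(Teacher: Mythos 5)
Your proposal is correct and is essentially the paper's own argument: a gauge-penalty replacement of the test function to obtain strict touching, the strict gap $\hat{v}_*(p_0,t_0) < h(p_0,t_0)$ forced by the supersolution property of $h$, a small constant bump that is a classical (hence viscosity) subsolution on a small cylinder, max-gluing justified by Lemma \ref{closedsup} together with the collar estimate, and the final point extracted from the definition of the lower semicontinuous envelope. The only difference is cosmetic: your conventions (a test function touching $\hat{v}_*$ from below with $\phi_t + F < 0$) are the ones consistent with Definition \ref{1}, whereas the paper writes the failure inequality with $\mathcal{A}$ and the opposite sign but then uses it exactly as you do; likewise your cylinder $Q$ with parameter $\delta$ plays the role of the paper's $B_\epsilon \subset B_{2\epsilon}$ with bump $4\epsilon$.
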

\begin{proof}  Let $\hat{v} \in \mathcal{S}$ such that $\hat{v}_*$ is not a parabolic viscosity supersolution of \eqref{main}.  Then there exists $(\hat{p},\hat{t}) \in \Omega_T$ and $\phi \in \mathcal{A}\hat{v}_*(\hat{p},\hat{t})$ such that 
\begin{equation}\label{contradict}
\phi_t(p,t)+F(t,p,\hat{v}_*(p,t),\nabla_1\phi(p,t), (D^2\phi(p,t))^\star) > 0.
\end{equation}
Let
$$\psi(p,t) = \phi(p,t) - \left(d_\mathcal{N}(\hat{p},p)\right)^{2l!} - |t-\hat{t}|^2$$
and notice that $\psi \in \mathcal{A}\hat{v}_*(\hat{p},\hat{t})$.
As in Lemma \ref{lem1}, 
\begin{equation}\label{bounded1}
(\hat{v}_*-\psi)(p,t) \geq \left(d_\mathcal{N}(\hat{p},p)\right)^{2l!} + |t-\hat{t}|^2.
\end{equation}
Let $B$ denote a compact neighborhood of $(\hat{p},\hat{t})$ and let $$B_{k\epsilon} = B \cap \left\{ (p,t) : \left(d_\mathcal{N}(\hat{p},p)\right)^{2l!} \leq k\epsilon \text{ and } |t-\hat{t}|^2 \leq k\epsilon \right\}.$$  Since $\hat{v} \in \mathcal{S}$, we have that $\hat{v} \leq h$ and thus $\psi(\hat{p},\hat{t})=\hat{v}_*(\hat{p},\hat{t}) \leq \hat{v}(\hat{p},\hat{t})  \leq h(\hat{p},\hat{t})$.  However, if $\psi(\hat{p},\hat{t}) = h(\hat{p},\hat{t})$, then $\psi \in \mathcal{A}h(\hat{p},\hat{t})$ and inequality $\eqref{contradict}$ would be contradictory.  Thus, 
$$\psi(\hat{p},\hat{t}) < h(\hat{p},\hat{t}).$$
Since $\psi$ is continuous and $h$ is lower semicontinuous, there exists $\epsilon > 0$ such that 
$$\psi(p,t) + 4\epsilon \leq h(p,t)$$
for $(p,t) \in B_{2\epsilon}$. Notice that $\psi + 4\epsilon$ is a subsolution of \eqref{main} on the interior of $B_{2\epsilon}$.  Further, by $\eqref{bounded1}$
\begin{equation}\label{bounded2}
\hat{v}(p,t) \geq \hat{v}_*(p,t) \geq \psi(p,t) + 4\epsilon \text{ for } (p,t) \in B_{2\epsilon} \backslash B_{\epsilon}.
\end{equation}
We now define $\omega$ by 
\begin{equation*}
\omega = \left\{ \begin{array}{ll}
\max\{\psi(p,t) + 4\epsilon,\hat{v}(p,t)\} & (p,t) \in B_\epsilon \\
\hat{v}(p,t) & (p,t) \in \Omega_T\backslash B_\epsilon
\end{array} \right.
\end{equation*}
But by $\eqref{bounded2}$
$$\omega(p,t) = \max\{\psi(p,t) + 4\epsilon,\hat{v}(p,t)\} \text{ for } (p,t) \in B_{2\epsilon},$$
not just for $(p,t) \in B_\epsilon$.  Then by Lemma \ref{closedsup}, $\omega$ is a subsolution in the interior of $B_{2\epsilon}$ 
and thus a subsolution in $\Omega_T$.  Therefore, $\omega \in \mathcal{S}$.  Since
$$0 = (\hat{v}_*-\psi)(\hat{p},\hat{t}) = \lim_{r \downarrow 0} \inf\left\{ (\hat{v} - \psi)(p,t) : (p,t) \in B_r \right\}$$
there is a point $(p_0,t_0) \in B_\epsilon$ that satisfies 
$$\hat{v}(p_0,t_0) - \psi(p_0,t_0) < 4\epsilon$$
which yields 
$$\hat{v}(p_0,t_0) < \psi(p_0,t_0) + 4\epsilon = \omega(p_0,t_0).$$
Thus, we have constructed $\omega \in \mathcal{S}$ that satisfies $\hat{v}(p_0,t_0) < \omega(p_0,t_0)$.
\end{proof}
We then have the following existence theorem concerning parabolic viscosity solutions.
\begin{thm}\label{exist}
Let $f$ be a parabolic viscosity subsolution to \eqref{main} and $g$ be a parabolic viscosity supersolution to \eqref{main} satisfying $f \leq g$ on $\Omega_T$ and $f_* = g^*$ on $\p_{\pb}\mathcal{O}_{0,T}$.  Then there is a parabolic viscosity solution $u$ to \eqref{main} satisfying $u \in C(\overline{O_T})$. Explicitly, there exists a unique parabolic viscosity infinite solution to Problem \ref{mainprob} when $h>1$. 
\end{thm}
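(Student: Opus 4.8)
The plan is to run Perron's method: build the solution as the largest subsolution lying beneath the given supersolution $g$, and then use the comparison principle (Theorem \ref{pinf}) to squeeze its upper and lower semicontinuous envelopes together. Concretely, I would take
$\mathcal{S}=\{w: w \text{ is a parabolic viscosity subsolution to }\eqref{main}\text{ with }w\le g\}$,
which is exactly the family of Lemma \ref{maximal} with $h=g$. Since $f\le g$ we have $f\in\mathcal{S}$, so $\mathcal{S}\ne\emptyset$, and I set $u=\sup\{w:w\in\mathcal{S}\}$; then $f\le u\le g$, so $u$ is finite throughout $\Omega_T$.

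Before anything else I would record what the boundary hypothesis buys us. On $\p_{\pb}\Omega_T$ the assumption $f_*=g^*$, combined with $f\le g$ and the universal inequalities $f_*\le f\le g\le g^*$, forces $f_*=f=g=g^*$ there. Hence $f$ and $g$ agree with a single continuous boundary datum $G$ on $\p_{\pb}\Omega_T$, and the sandwich $f\le u\le g$ then gives $u=G$ on $\p_{\pb}\Omega_T$; taking envelopes and using $u^*\le g^*=G$, $u_*\ge f_*=G$ shows $u^*=u_*=G$ on $\p_{\pb}\Omega_T$ as well.

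The core is three steps. First, by Lemma \ref{closedsup} the envelope $u^*$ is again a subsolution to \eqref{main}, and since $u^*=G$ on $\p_{\pb}\Omega_T$ it meets the same boundary datum as the supersolution $g$; the comparison principle therefore gives $u^*\le g$ on all of $\Omega_T$, so $u^*\in\mathcal{S}$. Maximality of $u$ then yields $u^*\le u$, and with $u\le u^*$ we conclude $u=u^*$, so $u$ itself lies in $\mathcal{S}$. Second, I claim $u_*$ is a supersolution: if it were not, Lemma \ref{maximal} applied with $\hat v=u\in\mathcal{S}$ (whose lower envelope is $u_*$) would produce $w\in\mathcal{S}$ and a point with $u(p_0,t_0)<w(p_0,t_0)$, contradicting $u=\sup\mathcal{S}\ge w$. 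Third, I would squeeze: always $u_*\le u\le u^*$, while on $\p_{\pb}\Omega_T$ both envelopes equal $G$, so $u^*\le u_*$ on the parabolic boundary; the comparison principle then forces $u^*\le u_*$ throughout $\Omega_T$, whence $u^*=u_*=u$. Thus $u$ is continuous, attains the datum $G$, and is simultaneously a sub- and supersolution, i.e. $u\in C(\overline{\Omega_T})$ is a parabolic viscosity solution to \eqref{main}.

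For the explicit statement about Problem \ref{mainprob} with $h>1$, note that the relaxed operator $F^h_\infty$ of \eqref{relax} is continuous for $1<h<3$, and continuity also holds for $h\ge 3$, so the abstract machinery applies, while Theorem \ref{pinf} supplies the comparison principle that both drives the squeeze above and delivers uniqueness. What remains, and what I expect to be the main obstacle, is exhibiting the pair $f,g$: I would construct a subsolution $f$ and a supersolution $g$ as barriers for the prescribed Cauchy--Dirichlet data, built from the smooth gauge $\mathcal{N}$ using comparison functions such as $\Phi(p_0,t_0)\pm\bigl(c\,d_{\mathcal{N}}(p_0,p)^{\alpha}+c\,|t-t_0|\bigr)$ with $c,\alpha$ large, and then passing to an infimum over upper barriers (a supersolution, via Lemma \ref{lem1}) and a supremum over lower barriers (a subsolution, via Lemma \ref{closedsup}), arranged so that $f_*=g^*$ equals the data on $\p_{\pb}\Omega_T$. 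The delicate point is verifying the supersolution and subsolution inequalities for these gauge barriers exactly where their horizontal gradient vanishes, which is the singular locus of $\Delta^h_\infty$; there one cannot evaluate the operator classically and must instead appeal to the relaxed criterion of Definition \ref{2} (equivalently Corollary \ref{moddef}). Granting such barriers, the construction above produces a solution in $C(\overline{\Omega_T})$, and comparison makes it unique.
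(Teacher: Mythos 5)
Your proposal is correct and follows essentially the same route as the paper: Perron's method with the identical class $\mathcal{S}$ and $u=\sup\mathcal{S}$, Lemma \ref{closedsup} for the subsolution property, Lemma \ref{maximal} to show $u_*$ is a supersolution by contradiction with maximality, and the comparison principle together with the boundary identity $f_*=g^*$ to squeeze $u^*\le u_*$ and obtain continuity. The only differences are points on which you are \emph{more} careful than the paper --- the explicit verification that $u=u^*\in\mathcal{S}$, and your acknowledgment that the ``explicit'' claim for Problem \ref{mainprob} with $h>1$ still requires constructing a barrier pair $(f,g)$, a step the paper's own proof silently omits.
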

\begin{proof}
Let 
$$S = \{\nu : \hspace{3pt} \nu \text{ is a parabolic viscosity subsolution to \eqref{main} in } \Omega_T \text{ with } \nu \leq g \text{ in } \Omega_T\}$$
and
$$ u(p,t) = \sup \{ \nu(p,t) : \hspace{3pt} \nu \in S\}.$$
Since $f \leq g$, the set $S$ is nonempty.  Notice that $f\leq u \leq g$ by construction.  By Lemma $\eqref{closedsup}$, $u$ is a parabolic viscosity subsolution.  Suppose $u_*$ is not a parabolic viscosity supersolution.  Then by Lemma \ref{maximal}, there exists a function $w \in S$ and a point $(p_0,t_0) \in \Omega_T$ such that $u(p_0,t_0) < w(p_0,t_0)$.  But this contradicts the definition of $u$ at $(p_0,t_0)$.  Thus $u_*$ is a parabolic viscosity supersolution. By our assumptions on $f$ and $g$ on $\p_{\pb}\mathcal{O}_{0,T}$, 
$$u=u^* \leq g^* = f_* \leq u_*$$
on $\p_{\pb}\mathcal{O}_{0,T}$.  Then by the (assumed) comparison principle,  $u \leq u_*$ on $\Omega_T$.  Thus we have $u$ is a parabolic viscosity solution such that $u \in C(\overline{O_T})$.  
\end{proof}
\subsection{The $h=1$ case}
We begin by recalling the definition of upper and lower relaxed limit of a function. \cite{CIL:UGTVS, Giga}. 
\begin{definition}
For $\varepsilon>0$, consider the function $h_\varepsilon:O_T\subset G\to\mathbb{R}$. The \emph{upper relaxed limit} $\overline{h}(p,t)$ and the lower relaxed limit $\underline{h}(p,t)$ are given by 
\begin{eqnarray*}
\overline{h}(p,t) & = & \limsup_{\hat{p}\to p,\hat{t}\to t, \varepsilon\to 0} h_\varepsilon(\hat{p}, \hat{t}) \\
 & = & \lim_{\varepsilon\to 0} \sup_{0<\delta<\varepsilon}\{h_\delta(\hat{p},\hat{t}): O_T\cap B_\varepsilon(\hat{p},\hat{t})\} \\
\textmd{and\ } \underline{h}(p,t) & = & \liminf_{\hat{p}\to p,\hat{t}\to t, \varepsilon\to 0} h_\varepsilon(\hat{p}, \hat{t}) \\
 & = & \lim_{\varepsilon\to 0} \inf_{0<\delta<\varepsilon}\{h_\delta(\hat{p},\hat{t}): O_T\cap B_\varepsilon(\hat{p},\hat{t})\} \\
\end{eqnarray*}
\end{definition}
Taking the relaxed limits as $h\to 1^+$ of the operator $F^h_\infty(\nabla_0 u,(D^2u)^{\star})$ in Equation \ref{relax}, we have via the continuity of the operator
\begin{eqnarray*}
\overline{F}^1_\infty(\nabla_0 u,(D^2u)^{\star}) = \underline{F}^1_\infty(\nabla_0 u,(D^2u)^{\star})=
\left\{\begin{array}{cl}
-\|\nabla_0u\|^{-2}\ip{(D^2u)^{\star}\nabla_0u}{\nabla_0u} & \nabla_0u \neq 0 \\
0 & \nabla_0u = 0. \end{array}\right.
\end{eqnarray*}
We give this operator the label $\mathcal{F}(\nabla_0 u,(D^2u)^{\star})$. 
Consider the relaxed limits $\overline{u}(p,t)$ and $\underline{u}(p,t)$ of the sequence of unique (continuous) viscosity solutions to Problem \ref{mainprob} $\{u_h(p,t)\}$ as $h\to 1^+$. By  \cite[Thm 2.2.1]{Giga}, we have $\overline{u}(p,t)$ is a viscosity subsolution and $\underline{u}(p,t)$ is a viscosity supersolution to 
$$u_t+\mathcal{F}(\nabla_0 u,(D^2u)^{\star})=0.$$
We have the following comparison principle, whose proof is similar to Theorem \ref{pinf} in the case to $h=1$ and is omitted. 
\begin{lemma}
Let $\Omega$ be a bounded domain in $G$.   If $\mathfrak{u}$ is a parabolic viscosity subsolution and $\mathfrak{v}$ a parabolic viscosity supersolution to $$u_t+\mathcal{F}(\nabla_0 u,(D^2u)^{\star})=0.$$ then $\mathfrak{u} \leq \mathfrak{v}$ on $\Omega_T\equiv\Omega\times [0,T) $.
\end{lemma}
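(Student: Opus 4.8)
The plan is to mirror the structure of the proof of Theorem \ref{pinf} in the case $h=1$, replacing the operator $F^1_\infty$ with the relaxed operator $\mathcal{F}$ and checking that every step that relied on the $h=1$ structure goes through verbatim. First I would perform the same reduction: given a subsolution $\mathfrak{u}$ and a supersolution $\mathfrak{v}$, I would fix $\varepsilon>0$, replace $\mathfrak{u}$ by $\tilde{\mathfrak{u}}=\mathfrak{u}-\frac{\varepsilon}{T-t}$ so that the subsolution inequality becomes strict, $\mathfrak{u}_t+\mathcal{F}(\nabla_0\mathfrak{u},(D^2\mathfrak{u})^\star)\leq -\frac{\varepsilon}{T^2}<0$, and so that $\tilde{\mathfrak{u}}\to-\infty$ uniformly as $t\uparrow T$; the conclusion then follows by letting $\varepsilon\to 0$. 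Arguing by contradiction, I would assume the maximum of $\mathfrak{u}-\mathfrak{v}$ on $\Omega_T$ is positive and attained at an interior point $(p_0,t_0)$ with $\mathfrak{u}(p_0,t_0)-\mathfrak{v}(p_0,t_0)=\delta>0$.

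Next I would introduce the same doubling function used in Case 2 of Theorem \ref{pinf}, namely
\begin{equation*}
\varphi(p,q,t,s)=\frac{1}{4}\sum_{i=1}^{N}\big((p\cdot q^{-1})_i\big)^4+\frac{1}{2}(t-s)^2,
\end{equation*}
and let $(p_\tau,q_\tau,t_\tau,s_\tau)$ maximize $\mathfrak{u}(p,t)-\mathfrak{v}(q,s)-\tau\varphi(p,q,t,s)$ over $\overline{\Omega}\times\overline{\Omega}\times[0,T)\times[0,T)$. The boundary and initial-data arguments forcing the maximizer to be interior for large $\tau$ are identical to those in Theorem \ref{pinf}, as is the splitting into the cases $p_\tau\neq q_\tau$ and $p_\tau=q_\tau$. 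When $p_\tau\neq q_\tau$ the horizontal gradient $\Upsilon(p_\tau,q_\tau)$ is nonzero, so $\mathcal{F}$ coincides with $-\Delta^1_\infty$ (i.e.\ with $-\|\nabla_0 u\|^{-2}\ip{(D^2u)^\star\nabla_0u}{\nabla_0u}$), and I would apply Corollary \ref{cpmpcor} exactly as in Case 2 to subtract the two jet inequalities and derive $0<\frac{\varepsilon}{T^2}\lesssim\tau\varphi(p_\tau,q_\tau)$, which contradicts $\tau\varphi(p_\tau,q_\tau)\to 0$ from Corollary \ref{cpmpcor}.

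When $p_\tau=q_\tau$ the gradient degenerates and I would reproduce the second-derivative comparison from Theorem \ref{pinf}: the supersolution $\mathfrak{v}$ is tested from below by $\beta^{\mathfrak{v}}$ and the subsolution $\mathfrak{u}$ from above by $\beta^{\mathfrak{u}}$, and subtracting the two inequalities gives $0<\varepsilon(T-s_\tau)^{-2}\leq 0$ using $p_\tau=q_\tau$ and the structure of $\varphi(p\cdot q^{-1})$; here the relevant definition is that $\mathcal{F}$ at a vanishing gradient is governed by the same $\max/\min$ over $\|\eta\|=1$ of $\ip{(D^2\phi)^\star\eta}{\eta}$ that appears in Definition \ref{2} for $h=1$, so nothing new is needed. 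The main point to verify — and the only place where this differs conceptually from Theorem \ref{pinf} — is that $\mathcal{F}$ is a genuine continuous operator of the form \eqref{main} to which Corollary \ref{cpmpcor} and the jet machinery literally apply; since $\mathcal{F}$ is precisely the relaxed limit obtained earlier and agrees with $F^1_\infty$ wherever the gradient is nonzero while vanishing with the gradient (matching the $h=1$ branches of Definition \ref{2}), the estimates transfer with no modification. I expect no serious obstacle: the argument is a transcription of the $h=1$ case of Theorem \ref{pinf} with $F^1_\infty$ replaced by $\mathcal{F}$, which is exactly why the authors state that the proof is omitted.
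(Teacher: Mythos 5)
Your proposal is correct and takes essentially the same route as the paper: the authors omit the proof precisely because it is the $h=1$ argument of Theorem \ref{pinf} transcribed for $\mathcal{F}$, which is exactly what you carry out (same doubling function, same interior-maximum argument, same split into $p_\tau\neq q_\tau$ and $p_\tau=q_\tau$). Your one imprecision --- describing $\mathcal{F}$ at a vanishing gradient as ``governed by'' the $\max/\min$ branches of Definition \ref{2} rather than by its defined value $0$ --- is harmless, since at $p_\tau=q_\tau$ the symmetrized Hessians of $\varphi$ vanish, so either convention yields the same contradiction.
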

\begin{corollary}
$\overline{u}(p,t)=\underline{u}(p,t)$.
\end{corollary}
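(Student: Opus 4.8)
The plan is to combine the automatic ordering of the two half-relaxed limits with the comparison principle stated immediately above. First I would record the inequality that costs nothing: since $\overline{u}$ is defined as a $\limsup$ and $\underline{u}$ as a $\liminf$ of the \emph{same} family $\{u_h\}$, the definitions force $\underline{u}(p,t) \leq \overline{u}(p,t)$ at every $(p,t) \in \Omega_T$. It therefore suffices to prove the reverse inequality $\overline{u} \leq \underline{u}$.

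Next I would invoke the two structural facts already assembled in the text. By the half-relaxed limit machinery cited in \cite[Thm 2.2.1]{Giga}, $\overline{u}$ is a viscosity subsolution and $\underline{u}$ is a viscosity supersolution of $u_t + \mathcal{F}(\nabla_0 u, (D^2 u)^\star) = 0$. These are exactly the hypotheses required by the comparison Lemma preceding this Corollary, so the bulk of the work is done provided the boundary data match up.

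The delicate step, and the one I expect to be the main obstacle, is precisely this boundary behavior. The comparison Lemma is a statement about the Cauchy--Dirichlet problem, so before applying it I must verify that $\overline{u}$ and $\underline{u}$ carry the correct parabolic boundary data. Because every $u_h$ solves Problem \ref{mainprob} with the \emph{same} $h$-independent data $\psi$ and $g$, one expects $\overline{u} = \underline{u} = g$ on $\partial_{\pb}\Omega_T$. The subtlety is that a half-relaxed limit is taken over a full space--time neighborhood reaching into the interior, so an $h$-dependent boundary layer could in principle make the relaxed limits disagree with $g$ on $\partial_{\pb}\Omega_T$. To rule this out I would construct barriers from the gauge $\mathcal{N}$, in the spirit of the classical supersolution $\tilde{\phi}_r$ used in Corollary \ref{h-infinite}, yielding a modulus of continuity up to the parabolic boundary that is uniform in $h$; this gives $\overline{u} \leq g \leq \underline{u}$ on $\partial_{\pb}\Omega_T$.

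Finally, with $\overline{u} \leq \underline{u}$ secured on $\partial_{\pb}\Omega_T$, the comparison Lemma applied to the subsolution $\overline{u}$ and the supersolution $\underline{u}$ yields $\overline{u} \leq \underline{u}$ throughout $\Omega_T$. Combined with the free inequality $\underline{u} \leq \overline{u}$ from the first paragraph, this gives $\overline{u} = \underline{u}$, which is the assertion.
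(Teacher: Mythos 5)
Your proposal is correct and follows exactly the paper's own route: the paper's entire proof is the two observations that $\underline{u}(p,t)\leq\overline{u}(p,t)$ by construction of the relaxed limits, and $\overline{u}(p,t)\leq\underline{u}(p,t)$ by the comparison Lemma applied to the subsolution $\overline{u}$ and supersolution $\underline{u}$. The boundary-data verification you single out as the delicate step is precisely what the paper leaves implicit when invoking its Lemma (which, as stated, records no boundary hypothesis at all), so your gauge-barrier sketch is a legitimate and useful supplement to the same argument rather than a different one.
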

\begin{proof}
By construction, $\underline{u}(p,t)\leq \overline{u}(p,t)$. By the Lemma, $\underline{u}(p,t)\geq \overline{u}(p,t)$.
\end{proof}
Using the corollary, we will call this common relaxed limit $u^1(p,t)$. By \cite[Chapter 2]{Giga} and \cite[Section 6]{CIL:UGTVS}, it is continuous and the sequence $\{u_h(p,t)\}$ converges locally uniformly to $u^1(p,t)$ as $h\to 1^+$. 

We then have the following theorem.
\begin{thm}\label{oneexist}
There exists a unique parabolic viscosity infinite solution to Problem \ref{mainprob} when $h=1$. 
\end{thm}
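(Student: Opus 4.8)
The plan is to take the common relaxed limit $u^1$ constructed above as the candidate solution, verify that it satisfies Definition \ref{2} for $h=1$, and then read off uniqueness from the comparison principle already proved for all $h\geq 1$. For each $h$ with $1<h<3$, Theorem \ref{exist} furnishes the unique continuous solution $u_h$ to Problem \ref{mainprob}, and the preceding discussion shows that $u_h\to u^1$ locally uniformly, that $u^1$ is continuous, and that $u^1=\overline u=\underline u$ is simultaneously a viscosity sub- and supersolution of $u_t+\mathcal{F}(\nabla_0 u,(D^2u)^\star)=0$. I would first record that $u^1$ carries the correct data: since every $u_h$ equals $g$ on $\p_{\pb}\Omega_T$ and the family admits barriers independent of $h$ (the explicit classical sub/supersolutions used in Corollary \ref{h-infinite} together with the boundary estimate corollaries), the locally uniform limit $u^1$ also equals $g$ on $\p_{\pb}\Omega_T$.

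The crux is to upgrade the statement ``$u^1$ solves $u_t+\mathcal{F}=0$'' to ``$u^1$ is a parabolic viscosity $1$-infinite solution in the sense of Definition \ref{2},'' the difficulty being that $\mathcal{F}$ sets the second-order term to $0$ when $\nabla_0 u=0$ whereas Definition \ref{2} uses the $\max$ or $\min$ of the quadratic form there. Corollary \ref{moddef} is precisely the bridge. Given $\phi\in\mathcal{A}u^1(p_0,t_0)$, if $\nabla_0\phi(p_0,t_0)\neq 0$ then $\mathcal{F}=-\Delta^1_\infty$ at that point and the $\mathcal{F}$-subsolution property gives $\phi_t-\Delta^1_\infty\phi\leq 0$; if instead $\nabla_0\phi(p_0,t_0)=0$ and $(D^2\phi)^\star(p_0,t_0)=0$, then $\mathcal{F}$ is continuous there with value $0$, so the $\mathcal{F}$-subsolution property gives $\phi_t(p_0,t_0)\leq 0$. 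These are exactly the hypotheses \eqref{modhyp}, so Corollary \ref{moddef} concludes that $u^1$ is a viscosity subsolution to (E) for $h=1$ in the full sense of Definition \ref{2}. The supersolution direction is handled identically using the analog of Corollary \ref{moddef} with $\min$ in place of $\max$ together with the supersolution property of $u^1=\underline u$. The decisive observation is that one only ever evaluates $\mathcal{F}$ where it is continuous --- either $\nabla_0\phi\neq 0$, or both $\nabla_0\phi$ and $(D^2\phi)^\star$ vanish --- so the genuine discontinuity of $\mathcal{F}$ at a vanishing gradient with nonzero Hessian never enters the argument. Hence $u^1$ is a parabolic viscosity $1$-infinite solution of Problem \ref{mainprob}.

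Uniqueness is then immediate from Theorem \ref{pinf}, which was established for all $h\geq 1$ and whose Case 2 covers $h=1$ directly: if $u$ and $v$ are two parabolic viscosity $1$-infinite solutions sharing the same parabolic boundary data, then applying the comparison principle once with $u$ as subsolution and $v$ as supersolution, and once with the roles reversed, yields $u\leq v$ and $v\leq u$, so $u=v$ on $\Omega_T$.

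I expect the reconciliation in the second paragraph to be the main obstacle. The delicate point is to confirm that the relaxed-limit solution genuinely satisfies the hypotheses of Corollary \ref{moddef}, rather than only the weaker envelope inequalities one might naively extract from a discontinuous operator; recognizing that $\mathcal{F}$ needs to be tested only at points where it is continuous is exactly what closes this gap, and everything else is a routine assembly of the results already in hand.
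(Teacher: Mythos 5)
Your proposal is correct at the same level of rigor as the paper, but the core verification runs along a genuinely different line, so a comparison is worthwhile. The paper never invokes the $\mathcal{F}$-equation satisfied by $u^1$ inside its proof of Theorem \ref{oneexist}: it takes $\phi\in\mathcal{A}u^1(p_0,t_0)$, uses the locally uniform convergence $u_{h_j}\to u^1$ to produce touching points $(p_j,t_j)\to(p_0,t_0)$ with $\phi\in\mathcal{A}u_{h_j}(p_j,t_j)$, applies Definition \ref{2} for $1<h_j<3$ at those points, and passes to the limit, with Corollary \ref{moddef} used (exactly as you use it) to dismiss test functions having vanishing horizontal gradient but nonvanishing $(D^2\phi)^\star$. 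You instead outsource that limit argument to the stability theorem already cited in the paper (Giga, Thm.\ 2.2.1), reading the hypotheses \eqref{modhyp} directly off the statement that $u^1=\overline{u}=\underline{u}$ is a viscosity sub- and supersolution of $u_t+\mathcal{F}(\nabla_0u,(D^2u)^\star)=0$, and then applying Corollary \ref{moddef}. This is shorter, and your flagged observation --- that $\mathcal{F}$ is only ever evaluated where it is continuous, i.e.\ where $\nabla_0\phi\neq0$ or where $\nabla_0\phi$ and $(D^2\phi)^\star$ both vanish --- is precisely what makes it sound: at such points the relaxed limits $\overline{F}^1_\infty$, $\underline{F}^1_\infty$ and the semicontinuous envelopes of $\mathcal{F}$ all agree with $\mathcal{F}$, so it does not matter which exact notion of solution for a discontinuous operator the stability theorem delivers. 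This robustness matters more than you may realize: the paper's claim that $\overline{F}^1_\infty=\underline{F}^1_\infty=\mathcal{F}$ is dubious at points $(0,X)$ with $X\neq0$ (the relaxed limits there see the extreme eigenvalues of $X$, not the value $0$), and the paper's hands-on argument through the approximants $u_{h_j}$ avoids leaning on that identification anywhere --- which is presumably why the authors redid the limit by hand rather than arguing as you do. Two points of parity: both arguments tacitly require the unstated supersolution analogue of Corollary \ref{moddef} (the paper's ``similarly'' needs it just as your $\min$-version does), and both leave attainment of the parabolic boundary data by $u^1$ essentially implicit --- your barrier sketch is vague, but the paper offers nothing more. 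Uniqueness via two applications of Theorem \ref{pinf}, whose Case 2 covers $h=1$, is identical in both.
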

\begin{proof}
Let $\{u_h(p,t)\}$ and $u^1(p,t)$ be as above. Let $\{h_j\}$ be a subsequence with $h_j\to 1^+$ where 
$u_h(p,t)\to u^1(p,t)$ uniformly. We may assume $h_j<3$. 

Let  $\phi\in\mathcal{A}u_1(p_0,t_0)$. Using the uniform convergence, there is a sequence $\{p_j,t_j\}\to (p_0,t_0)$ so that $\phi \in \mathcal{A}u_{h_j}(p_j,t_j)$. If $\nabla_0\phi(p_0,t_0)\neq 0$, we have  $\nabla_0\phi(p_j,t_j)\neq 0$ for sufficiently large $j$. 
We then have
$$\phi_t(p_j,t_j)-\Delta^{h_j}_\infty \phi(p_j,t_j) \leq 0$$ and letting $j\to\infty$ yields
$$\phi_t(p_0,t_0)-\Delta^1_\infty \phi(p_0,t_0) \leq 0.$$  
Suppose $\nabla_0\phi(p_0,t_0)=0$. By Corollary \ref{moddef}, we may assume $(D^2\phi)^\star(p_0,t_0)=0$.  Suppose passing to a subsequence if needed, we have  $\nabla_0\phi(p_j,t_j)\neq 0$.
Then 
\begin{equation*}
\phi_t(p_j,t_j)-\D \max_{\|\eta\|=1}\ip{(D^2\phi)^\star(p_j,t_j)\;\eta}{\eta}  \leq 
\phi_t(p_j,t_j)-\Delta^{h_j}_\infty \phi(p_j,t_j) \leq 0.
\end{equation*}
Letting $j\to\infty$ yields
$$\phi_t(p_0,t_0)=\phi_t(p_j,t_j)-\D \max_{\|\eta\|=1}\ip{(D^2\phi)^\star(p_0,t_0)\;\eta}{\eta}\leq 0.$$
In the case $\nabla_0\phi(p_j,t_j)= 0$, since $h_j<3$, we have $\phi_t(p_j,t_j) \leq 0$ and letting $j\to\infty$ yields $\phi_t(p_0,t_0)\leq 0$. We conclude that $u_1$ is a parabolic viscosity h-infinite subsolution. Similarly, 
$u_1$ is a parabolic viscosity h-infinite supersolution.
\end{proof}

\section{ The limit as $t\to\infty$.}
We now focus our attention on the asymptotic limits of the parabolic viscosity h-infinite solutions. We wish to show that for $1 \leq h$, we have the (unique) viscosity solution to $u_t-\Delta^h_\infty u=0$ approaches the viscosity solution of $-\Delta^h_\infty u=0$ as $t\to\infty$.  Our goal is the following theorem:
\begin{thm}\label{final} Let $h>1$ and $u \in C(\overline{\Omega} \times [0,\infty))$ be a viscosity solution of 
\begin{equation}\label{finaleqn}
\left\{ \begin{array}{cl}
u_t -\Delta^h_\infty u =0 & \hspace{10pt} in \hspace{5pt} \Omega \times (0,\infty),\\ 
u(p,t) =g(p) & \hspace{10pt} on \hspace{5pt} \p_{\pb}(\Omega \times (0, \infty))
\end{array}\right.
\end{equation}
with $g:\overline{\Omega} \to \mathbb{R}$ continuous and assuming that $\partial \Omega$ satisfies the property of positive geometric density (see \cite[pg. 2909]{J:PD}).  Then $u(p,t) \to U(p)$ uniformly in $\Omega$ as $t \to \infty$ where $U(p)$ is the unique viscosity solution of $-\Delta^h_\infty U = 0$ with the Dirichlet boundary condition $lim_{q \to p} U(q) = g(p)$ for all $p \in \partial \Omega$.
\end{thm}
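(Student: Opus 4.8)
The plan is to combine the comparison principle with the half-relaxed limit (Barles--Perthame) method, so that the large-time behaviour is reduced to the \emph{stationary} comparison already available through Theorem \ref{subelliptic}. First I would record two preliminary facts. Away from the set $\{\nabla_0 U=0\}$ the operator $-\Delta^h_\infty$ agrees with a positive multiple of the infinite Laplacian, so the stationary problem $-\Delta^h_\infty U=0$ with boundary data $g$ has the unique viscosity solution $U$ furnished by Theorem \ref{subelliptic}; moreover the function $(p,t)\mapsto U(p)$, being independent of $t$, is itself a parabolic viscosity solution of \eqref{finaleqn} with the same lateral data $g$. I would also note that by the boundary-estimate corollaries the family $\{u(\cdot,t)\}_{t\ge 0}$ is uniformly bounded by $\max_{\partial\Omega}|g|$.

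Second, I would establish monotonicity of the extreme deviations. Setting
$$\Phi(t)=\max_{\overline{\Omega}}\big(u(\cdot,t)-U\big),\qquad \phi(t)=\min_{\overline{\Omega}}\big(u(\cdot,t)-U\big),$$
the identity $u=U=g$ on the lateral boundary gives $\Phi(t)\ge 0\ge\phi(t)$. Comparing $u$ on $\Omega\times[t_1,\infty)$ with the stationary supersolution $U+\Phi(t_1)$ and the stationary subsolution $U+\phi(t_1)$ via Theorem \ref{pinf} (applied on each finite subinterval and then letting it grow) shows that $\Phi$ is nonincreasing and $\phi$ is nondecreasing. Hence $\Phi(t)\downarrow\Phi_\infty\ge 0$ and $\phi(t)\uparrow\phi_\infty\le 0$, and since $\phi(t)\le u(p,t)-U(p)\le\Phi(t)$ pointwise, it suffices to prove $\Phi_\infty=\phi_\infty=0$ in order to obtain uniform convergence $u(\cdot,t)\to U$.

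The core step treats the time-translated solutions $u^{s}(p,t)=u(p,t+s)$: each solves \eqref{finaleqn}, and they are uniformly bounded, so their upper and lower half-relaxed limits
$$\overline{u}(p)=\limsup_{s\to\infty,\,q\to p}u(q,s),\qquad \underline{u}(p)=\liminf_{s\to\infty,\,q\to p}u(q,s)$$
are finite; note they are automatically independent of the time slot, because a shift in $t$ is absorbed into $s$. For $h>1$ the relaxed operator $F^h_\infty$ of \eqref{relax} is continuous, so the stability theorem \cite[Thm.~2.2.1]{Giga} applies: $\overline{u}$ is a parabolic viscosity subsolution and $\underline{u}$ a parabolic viscosity supersolution, and being $t$-independent they are, respectively, a viscosity subsolution and a viscosity supersolution of the stationary equation $-\Delta^h_\infty w=0$ in $\Omega$. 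Using the positive geometric density of $\partial\Omega$ I would construct, at each boundary point, time-independent barriers bounding every $u(\cdot,t)$ near the boundary from above and below; these force $\overline{u}=\underline{u}=g$ on $\partial\Omega$ and simultaneously give equicontinuity up to $\partial\Omega$ uniform in $t$. Then $\overline{u},\underline{u}$ form a stationary sub/supersolution pair with common boundary data $g$, so the comparison behind Theorem \ref{subelliptic} yields $\overline{u}\le U\le\underline{u}$; as $\underline{u}\le\overline{u}$ always holds, all three coincide. Consequently $\Phi_\infty=\max_{\overline{\Omega}}(\overline{u}-U)=0$ and likewise $\phi_\infty=0$, and the coincidence of the half-relaxed limits upgrades the pointwise statement to locally uniform convergence, made uniform on $\overline{\Omega}$ by the boundary barriers.

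The step I expect to be the main obstacle is the boundary analysis, namely showing $\overline{u}=\underline{u}=g$ on $\partial\Omega$ with control uniform in $t$. This is precisely where the positive geometric density hypothesis enters, guaranteeing the existence of stationary boundary barriers for the infinite Laplacian as in \cite[pg.~2909]{J:PD}. By contrast, the interior passage to the limit is clean: once the half-relaxed limits are recognized as independent of $t$, the troublesome $u_t$ term drops out, and stability of viscosity solutions delivers the stationary sub/supersolution property without further effort.
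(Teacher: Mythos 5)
Your proposal is correct in its overall structure, but it takes a genuinely different route from the paper, so the comparison is worth recording. The paper's proof is compactness-based: it invokes \cite[Chapter III]{DiB} (this is precisely where the positive geometric density hypothesis enters) to get equicontinuity of the family $\{u(\cdot,t)\}_{t>0}$, extracts a uniformly convergent subsequence $u(\cdot,t_j)\to U$ by Arzela--Ascoli, and then proves that the limit is a stationary viscosity solution by an explicit construction: the homogeneity of Equation \eqref{finaleqn} yields the quantitative time-regularity estimate of Lemma \ref{bounduth}, which is used to promote an elliptic test function $\phi$ to the parabolic test function $\phi_j(p,t)=\phi(p)+C\left(t/t_j\right)^{h/(1-h)}(t_j-t)/t_j$, giving $-\Delta^h_\infty\phi(p_j)\le C/t_j$ and the subsolution property in the limit. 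Your half-relaxed-limit (Barles--Perthame) argument replaces all of this: the relaxed limits of the time translates are automatically $t$-independent, so the $u_t$ term disappears for free; stability (\cite[Thm.~2.2.1]{Giga}, legitimate here because for $h>1$ the relaxed operator \eqref{relax} is continuous) makes $\overline{u}$ and $\underline{u}$ stationary sub/supersolutions; and the stationary comparison theorem (the paper's Corollary \ref{threeisone} and the theorem preceding it) finishes the argument. Your route needs neither Lemma \ref{bounduth} nor any interior equicontinuity, while the paper's route produces, as a byproduct, the quantitative decay $|u(x,t-\mathcal{T})-u(x,t)|\lesssim \mathcal{T}/t$ that your argument does not give. (Your preliminary monotonicity step for $\Phi$ and $\phi$ is sound, via Theorem \ref{pinf} with the stationary comparison functions $U+\Phi(t_1)$ and $U+\phi(t_1)$, but it ends up redundant once the relaxed limits are identified.)

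The one place where your plan is not yet a proof is exactly the step you flag as the main obstacle: showing $\overline{u}=\underline{u}=g$ on $\partial\Omega$ with control uniform in $t$. Your cited mechanism is misattributed. On \cite[pg.~2909]{J:PD}, and likewise in this paper, positive geometric density is \emph{not} used to build barriers; it is the hypothesis under which DiBenedetto's boundary regularity theory applies, yielding equicontinuity of $\{u(\cdot,t)\}$ up to $\partial\Omega$. There is no stationary barrier lemma there for you to quote. To close the gap you must either (i) import that same equicontinuity result, as the paper does, or (ii) actually construct barriers. Option (ii) is feasible, and in fact exploits the special structure of \eqref{finaleqn}: the initial and lateral data are the same time-independent function $g$, so for $p_1\in\partial\Omega$ and $\varepsilon>0$ a cone $w(p)=g(p_1)+\varepsilon+L\,d_C(p,p_1)$ with $L$ large satisfies $w\ge g$ on all of $\overline{\Omega}$, hence dominates $u$ on the entire parabolic boundary, and is a stationary (hence parabolic) viscosity supersolution for every $h\ge 1$ because the Carnot--Carath\'{e}odory distance is $\infty$-superharmonic away from its pole (this is the comparison-with-cones machinery behind Theorem \ref{subelliptic}). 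Theorem \ref{pinf} then gives $u\le w$ for all times, and with the matching lower cone this pins down the boundary values of the relaxed limits and supplies the uniformity near $\partial\Omega$. Note that such a construction uses no geometric density at all; as written, however, your assertion that "positive geometric density guarantees stationary boundary barriers" is unsupported, and this step must be supplied explicitly before your argument is complete.
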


We first must establish the uniqueness of viscosity solutions to the limit equation. Note that for future reference, we include the case $h=1$. 
\begin{thm}
Let $1\leq h< \infty$ and let $\Omega$ be a bounded domain. Let $u$ be a viscosity subsolution to $\Delta^h_\infty u =0$ 
and let $v$ be a viscosity supersolution to $-\Delta^h_\infty u =0$.
Then,
\begin{equation*}
\sup_{p \in \overline{\Omega}} (u(p)-v(p)) = \sup_{p \in \partial \Omega} (u(p)-v(p)).
\end{equation*}
\end{thm}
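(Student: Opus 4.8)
The plan is to run the doubling-of-variables argument of Theorem~\ref{pinf} in the stationary setting, supplying by hand the strict defect that the parabolic proof drew for free from the time derivative.

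First I would reduce the claimed equality to an ordered comparison. The inequality $\sup_{\overline{\Omega}}(u-v)\ge\sup_{\partial\Omega}(u-v)$ is immediate, as $u-v$ is upper semicontinuous on the compact set $\overline{\Omega}$. For the reverse, note that $\Delta^h_\infty$ sees only $\nabla_0 u$ and $(D^2u)^\star$, so adding a constant to a subsolution leaves it a subsolution; writing $k=\sup_{p\in\partial\Omega}(u(p)-v(p))$, the function $u-k$ is a subsolution with $u-k\le v$ on $\partial\Omega$. It therefore suffices to show that boundary ordering propagates inward, i.e.\ that $u\le v$ on $\partial\Omega$ forces $u\le v$ on $\overline{\Omega}$.

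Arguing by contradiction, suppose $\max_{\overline{\Omega}}(u-v)=\delta>0$ at an interior point. I would double the space variable with $\phi(p,q)=\frac1H\sum_{i=1}^N\big((p\cdot q^{-1})_i\big)^H$ for an even $H\ge h+3$ as in \eqref{phidef}, let $(p_\tau,q_\tau)$ maximize $u(p)-v(q)-\tau\phi(p,q)$, and use the boundary ordering to force $(p_\tau,q_\tau)\in\Omega\times\Omega$ for large $\tau$. The stationary Carnot maximum principle of \cite{B:MP}, equivalently Corollary~\ref{cpmpcor} applied to the $t$-independent $\phi$, then yields a common vector $\tau\Upsilon_\tau$ and matrices $\mathcal{X}_\tau\in\overline{P}^{2,+}u(p_\tau)$, $\mathcal{Y}_\tau\in\overline{P}^{2,-}v(q_\tau)$ obeying $\|\Upsilon_\tau\|\sim\phi(p_\tau,q_\tau)^{(H-1)/H}$ together with the matrix bound \eqref{real2}. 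On a subsequence with $p_\tau\ne q_\tau$ I would subtract the sub- and supersolution inequalities for $F^h_\infty$ exactly as in \eqref{festimate}; the degenerate case $p_\tau=q_\tau$ (so $\Upsilon_\tau=0$) is disposed of as in Case~1 of Theorem~\ref{pinf} when $h>1$ and, when $h=1$, through the $\min/\max$ over unit vectors together with Corollary~\ref{moddef}, mirroring Case~2.

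The one genuine obstacle --- and the sole departure from Theorem~\ref{pinf} --- is that $F^h_\infty$ has no dependence on $u$ and there is now no time variable to decouple, so the subtraction produces only
\[
0\le\ip{\mathcal{X}_\tau\Upsilon_\tau}{\Upsilon_\tau}-\ip{\mathcal{Y}_\tau\Upsilon_\tau}{\Upsilon_\tau}\lesssim\tau\,\phi(p_\tau,q_\tau)^{(4H-6)/H}\to 0,
\]
which is consistent rather than contradictory. In the parabolic proof the strict defect $-\varepsilon/T^2$ arose from the spatially constant shift $u-\varepsilon/(T-t)$, invisible to $\Delta^h_\infty$; here I would instead inject strictness spatially. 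Concretely, I would first prove the comparison under the extra assumption that $u$ is a \emph{strict} subsolution, $\Delta^h_\infty u\ge\theta>0$ in the viscosity sense, for which the same subtraction gives $0<\theta\le(\tau\phi(p_\tau,q_\tau))^h\,\phi(p_\tau,q_\tau)^{(H-h-3)/H}\to0$, a genuine contradiction since $H>h+3$. The general statement would then follow by approximating $u$ by strict subsolutions of the form $u_\sigma=u+\sigma\,\mathcal{N}(p_0^{-1}\cdot p)^{\beta}$ with $\beta>1$ chosen so that the gauge power is a classical strict subsolution off its pole and so that $u_\sigma$ respects the boundary ordering, and then letting $\sigma\to0$. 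I expect the delicate point to be the verification that $u_\sigma$ is genuinely a strict subsolution: because $\Delta^h_\infty$ is not additive, this should require a case split according to whether a test-function gradient at the contact point is comparable to, or dominated by, $\sigma\nabla_0(\mathcal{N}^\beta)$, and this approximation step is the main technical hurdle of the proof.
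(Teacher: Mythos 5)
Your strategy---rerunning the doubling argument of Theorem \ref{pinf} in the stationary setting, first under a strictness hypothesis and then approximating---is not the paper's route, and it has a genuine gap at exactly the step you flag as the ``main technical hurdle.'' The gap is the approximation itself: it does not follow (and is not true in any obvious sense) that $u_\sigma = u + \sigma\,\mathcal{N}(p_0^{-1}\cdot p)^{\beta}$ is a \emph{strict} viscosity subsolution whenever $u$ is a viscosity subsolution. If $\phi$ touches $u_\sigma$ from above at a point $p_1$, then $\eta = \phi - \sigma\mathcal{N}^\beta$ touches $u$ from above there, and the only information available is the single scalar inequality $\ip{(D^2\eta)^\star\nabla_0\eta}{\nabla_0\eta}(p_1) \geq 0$. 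Writing $\psi=\mathcal{N}^\beta$ and expanding,
\begin{align*}
\ip{(D^2\phi)^\star\nabla_0\phi}{\nabla_0\phi}
&= \ip{(D^2\eta)^\star\nabla_0\eta}{\nabla_0\eta}
+ \sigma\ip{(D^2\psi)^\star\nabla_0\phi}{\nabla_0\phi}\\
&\quad + 2\sigma\ip{(D^2\eta)^\star\nabla_0\eta}{\nabla_0\psi}
+ \sigma^2\ip{(D^2\eta)^\star\nabla_0\psi}{\nabla_0\psi},
\end{align*}
the cross terms involve $(D^2\eta)^\star$ evaluated in directions transverse to $\nabla_0\eta$, which the viscosity inequality does not constrain at all: $(D^2\eta)^\star$ can be arbitrarily negative in the direction $\nabla_0\psi$ while $\ip{(D^2\eta)^\star\nabla_0\eta}{\nabla_0\eta}\geq 0$ still holds. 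Your proposed case split on whether $\nabla_0\phi$ is comparable to, or dominated by, $\sigma\nabla_0(\mathcal{N}^\beta)$ cannot repair this, because the obstruction is the uncontrolled Hessian, not the relative size of the gradients. This is not a removable technicality: the impossibility of producing strict subsolutions of the (non-proper, degenerate) infinite Laplacian by additive perturbations is precisely the known hard core of Jensen-type comparison theorems, which is why the classical proofs resort to quite different devices (sup-convolutions combined with gradient-constrained auxiliary equations such as $\min\{\|\nabla_0 u\|-\epsilon,\,-\Delta_\infty u\}=0$, comparison with cones, etc.). In effect, your sketch defers the entire content of the theorem to this unproven claim.

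For contrast, the paper's proof is a short reduction rather than a doubling argument: for a test function $\phi$ with $\nabla_0\phi(p_0)\neq 0$ one divides the inequality $-\|\nabla_0\phi(p_0)\|^{h-3}\ip{(D^2\phi)^\star(p_0)\nabla_0\phi(p_0)}{\nabla_0\phi(p_0)}\leq 0$ by the strictly positive factor $\|\nabla_0\phi(p_0)\|^{h-3}$, and at points where $\nabla_0\phi(p_0)=0$ the $h=3$ condition $-\ip{(D^2\phi)^\star\nabla_0\phi}{\nabla_0\phi}=0\leq 0$ holds trivially; hence every viscosity sub(super)solution for any $h\geq 1$ is a viscosity sub(super)solution for $h=3$. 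The theorem then follows by citing the already-established elliptic comparison principle for $-\Delta^3_\infty$ in Carnot groups (Theorem \ref{subelliptic}, from \cite{B:MP, B:HG, W:W}). If you want a self-contained proof along your lines, you would in essence have to reprove that cited theorem, which is substantially harder than your sketch suggests.
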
 
\begin{proof}
Let $u$ be a viscosity subsolution to $-\Delta^h_\infty u =0$. Then choose $\phi \in \ccs(\Omega)$ such that  $0 = \phi(p_0) - u(p_0) < \phi(p) - u(p)$ for $p \in \Omega$, $p \neq p_0$. 
If $\|\nabla_0 \phi(p_0)\| = 0$, then $-\ip{(D^2\phi)^\star(p_0)\nabla_0\phi(p_0)}{\nabla_0\phi(p_0)} = 0 \leq 0$.
If $\|\nabla_0 \phi(p_0)\| \neq 0$, we then have
$$-\Delta^h_\infty \phi(p_0) = -\|\nabla_0\phi(p_0)\|^{h-3}\ip{(D^2\phi)^\star(p_0)\nabla_0\phi(p_0)}{\nabla_0\phi(p_0)} \leq 0.$$ 
Dividing, we have $-\ip{(D^2\phi)^\star(p_0)\nabla_0\phi(p_0)}{\nabla_0\phi(p_0)} \leq 0$.  In either case, $u$ is a viscosity subsolution to $-\Delta^3_\infty u=0$. Similarly, $v$ is a viscosity supersolution to $-\Delta^3_\infty u=0$. The theorem follows from the corresponding result for $-\Delta^3_\infty u=0$ in \cite{B:MP, B:HG, W:W}.  
\end{proof}
We state some obvious corollaries:
\begin{corollary}
Let $1\leq h< \infty$ and let $g:\partial \Omega \to \mathbb{R}$ be continuous. Then there exactly  one solution to 
\begin{equation*}
\left\{ \begin{array}{cl}
-\Delta^h_\infty u =0 & \hspace{10pt} in \hspace{5pt} \Omega \\ 
u=g & \hspace{10pt} on \hspace{5pt} \partial \Omega. 
\end{array}\right.
\end{equation*}
\end{corollary}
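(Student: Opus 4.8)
The plan is to obtain both halves of the statement from the comparison theorem just proved together with the elliptic theory for the standard infinite Laplacian recorded in Theorem \ref{subelliptic}.

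First I would dispatch uniqueness. Suppose $u_1$ and $u_2$ both solve the Dirichlet problem with the common boundary datum $g$. Then $u_1$ is in particular a viscosity subsolution and $u_2$ a viscosity supersolution of $-\Delta^h_\infty u=0$, and both equal $g$ on $\partial\Omega$. The preceding theorem gives
\[
\sup_{p\in\overline{\Omega}}\big(u_1(p)-u_2(p)\big)=\sup_{p\in\partial\Omega}\big(g(p)-g(p)\big)=0,
\]
so $u_1\leq u_2$ on $\overline{\Omega}$; interchanging $u_1$ and $u_2$ gives the reverse inequality and hence $u_1=u_2$.

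For existence I would use the observation that drives the proof of the preceding theorem: the viscosity sub/super-solutions of $-\Delta^h_\infty u=0$ coincide with those of $-\Delta^3_\infty u=0$. Concretely, if $\phi\in\ccs(\Omega)$ touches $u$ from above at $p_0$ and $\nabla_0\phi(p_0)\neq 0$, then the factor $\|\nabla_0\phi(p_0)\|^{h-3}$ is strictly positive, so $-\Delta^h_\infty\phi(p_0)\leq 0$ is equivalent to $-\ip{(D^2\phi)^\star(p_0)\nabla_0\phi(p_0)}{\nabla_0\phi(p_0)}\leq 0$, i.e. to $-\Delta^3_\infty\phi(p_0)\leq 0$; and when $\nabla_0\phi(p_0)=0$ the relevant quadratic form vanishes, so both inequalities hold automatically. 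The same reasoning applies to test functions touching from below, so the subsolution, supersolution, and solution classes for arbitrary $h\geq 1$ are identical to those for $h=3$, namely the infinite harmonic functions for $\Delta_\infty=\Delta^3_\infty$.

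Consequently the Dirichlet problem for $-\Delta^h_\infty u=0$ with continuous boundary data $g$ has a solution exactly when the Dirichlet problem for $\Delta_\infty u=0$ does, and the latter is precisely the content of Theorem \ref{subelliptic}, which furnishes a viscosity solution $u_\infty$ attaining the boundary values $g$. This $u_\infty$ is the required solution for every $h\geq 1$. Since the whole argument is a reduction to the case $h=3$, I do not expect a genuine analytic obstacle; the only delicate point is confirming that the equivalence of solution classes is respected by the viscosity formulation at points where $\nabla_0\phi$ vanishes, and this is exactly what the computation in the preceding theorem already settles.
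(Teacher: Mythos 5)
Your proposal is correct and follows exactly the route the paper intends: the paper states this result as an immediate (``obvious'') corollary, with uniqueness coming from the comparison theorem just proved and existence coming from the identification of the $h$-solution classes with the $h=3$ class (which the paper records separately as Corollary \ref{threeisone}) together with the known existence result of Theorem \ref{subelliptic}. Your handling of the vanishing-gradient case (declaring the quadratic form to vanish, so the test-function inequality is automatic) is the same convention the paper itself adopts in the proof of the preceding theorem, so your argument is faithful to the paper's framework in both directions of the $h \leftrightarrow 3$ equivalence.
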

\begin{corollary}\label{threeisone}
Let $1\leq h< \infty$ and let $g:\partial \Omega \to \mathbb{R}$ be continuous. The unique viscosity solution to 
\begin{equation*}
\left\{ \begin{array}{cl}
-\Delta^h_\infty u =0 & \hspace{10pt} in \hspace{5pt} \Omega \\ 
u=g & \hspace{10pt} on \hspace{5pt} \partial \Omega 
\end{array}\right.
\end{equation*}
is the unique viscosity solution to 
\begin{equation*}
\left\{ \begin{array}{cl}
-\Delta^3_\infty u =0 & \hspace{10pt} in \hspace{5pt} \Omega \\ 
u=g & \hspace{10pt} on \hspace{5pt} \partial \Omega. 
\end{array}\right.
\end{equation*}
\end{corollary}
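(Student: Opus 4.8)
The plan is to reduce the statement for general $h$ entirely to the already-settled case $h=3$, by observing that the classes of viscosity sub- and supersolutions of $-\Delta^h_\infty u=0$ and of $-\Delta^3_\infty u=0$ coincide. This is precisely the content extracted in the proof of the preceding theorem, so once that equivalence is in hand the corollary follows by invoking the known existence and uniqueness for $h=3$ recorded in Theorem \ref{subelliptic} (i.e.\ \cite{B:MP, B:HG, W:W}).

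First I would let $u$ be the unique viscosity solution of $-\Delta^h_\infty u=0$ in $\Omega$ with $u=g$ on $\partial\Omega$, whose existence and uniqueness are supplied by the preceding corollary. The key step is to check that this same $u$ solves $-\Delta^3_\infty u=0$. Fix $\phi\in\ccs(\Omega)$ touching $u$ from above at an interior point $p_0$ and split on whether $\no\phi(p_0)$ vanishes. If $\no\phi(p_0)\neq 0$, the subsolution inequality for $-\Delta^h_\infty$ reads $-\|\no\phi(p_0)\|^{h-3}\ip{(D^2\phi)^\star(p_0)\no\phi(p_0)}{\no\phi(p_0)}\leq 0$, and dividing by the strictly positive scalar $\|\no\phi(p_0)\|^{h-3}$ gives exactly the $-\Delta^3_\infty$ subsolution inequality. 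If $\no\phi(p_0)=0$, the quadratic form $\ip{(D^2\phi)^\star(p_0)\no\phi(p_0)}{\no\phi(p_0)}$ vanishes identically, so the required inequality $-\ip{(D^2\phi)^\star(p_0)\no\phi(p_0)}{\no\phi(p_0)}=0\leq 0$ holds automatically. Running the identical computation with test functions touching from below shows $u$ is also a viscosity supersolution of $-\Delta^3_\infty u=0$, hence a solution.

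I would then close the argument by uniqueness: $u$ solves $-\Delta^3_\infty u=0$ in $\Omega$ with $u=g$ on $\partial\Omega$, and by Theorem \ref{subelliptic} this Dirichlet problem has exactly one viscosity solution, so $u$ must equal it. That is the asserted identity of the two unique solutions. Multiplying rather than dividing by $\|\no\phi\|^{h-3}$ gives the reverse inclusion and confirms that the two solution classes genuinely agree, though only the direction above is needed for the equality.

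The only place calling for care is the singular range $1\leq h<3$, where $-\Delta^h_\infty$ is not continuous and the viscosity definition (Definition \ref{2}, in its elliptic form) carries separate branches where $\no\phi$ vanishes. I do not expect this to be a real obstacle: at exactly those points the relevant quadratic form is zero, so both $-\Delta^3_\infty$ inequalities collapse to $0\leq 0$ (respectively $0\geq 0$) and impose nothing, matching the degenerate branches of the definition. Thus the case split above handles $h=1$ and $1<h<3$ uniformly, and the corollary is in effect an immediate consequence of the preceding theorem rather than requiring a fresh estimate.
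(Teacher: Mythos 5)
Your proposal is correct and is essentially the paper's own argument: the paper's proof of the preceding theorem performs exactly your case split, dividing by the positive scalar $\|\nabla_0\phi(p_0)\|^{h-3}$ when $\nabla_0\phi(p_0)\neq 0$ and noting the quadratic form vanishes when $\nabla_0\phi(p_0)=0$, to show that $h$-sub- and supersolutions are $3$-sub- and supersolutions, after which the corollary follows from uniqueness for the $h=3$ Dirichlet problem just as you conclude. (Your side remark that multiplying back yields the reverse inclusion is not automatic for $1\leq h<3$ at points where the gradient of the test function vanishes, since Definition \ref{2} imposes nontrivial extra conditions there, but as you note that direction is never needed.)
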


Our method of proof for Theorem \ref{final} follows that of \cite[Theorem 2]{J:PD}, the core of which hinges on the construction of a parabolic test function from an elliptic one.  In order to construct such a parabolic test function, we need to examine the homogeneity of Equation \eqref{finaleqn}. A quick calculation shows that for a fixed $h>1$, $k^{\frac{1}{h-1}}u(x,kt)$ is a $\ccs$ solution to Equation \eqref{finaleqn} if $u(x,t)$ is a $\ccs$ solution. A routine calculation then shows parabolic viscosity h-infinite solutions share this homogeneity.  We use this property in the following lemma, the proof of which can be found in  \cite[pg. 170]{DiB}. (Also, cf. \cite[Lemma 6.2] {BM} and \cite{J:PD}.)
\begin{lemma}\label{bounduth}
Let $u$ be as in Theorem \ref{final} and $h>1$.  Then for every $(x,t) \in \Omega \times (0,\infty)$ and for $0 < \mathcal{T}< t$, we have
\begin{equation*}
|u(x,t-\mathcal{T}) - u(x,t)|  \leq  \frac{2 ||g||_{\infty,\Omega}}{h-1}\left(1-\frac{\mathcal{T}}{t}\right)^{\frac{h}{1-h}}\frac{\mathcal{T}}{t} 
\end{equation*}
\end{lemma}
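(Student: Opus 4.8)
The plan is to exploit the intrinsic time-scaling of Equation~\eqref{finaleqn} together with the comparison principle, Theorem~\ref{pinf}. The starting observation is the homogeneity recorded just before the lemma: if $u(x,t)$ is a parabolic viscosity $h$-infinite solution, then so is the rescaled function $\bar u(x,s)=k^{\frac{1}{h-1}}u(x,ks)$ for any constant $k>0$, and adding a constant to a solution again yields a solution (since constants are annihilated by both $\partial_t$ and $\Delta^h_\infty$). With $\mathcal{T}$ and $t$ fixed, I would choose the dilation factor $k=1-\frac{\mathcal{T}}{t}\in(0,1)$, so that $kt=t-\mathcal{T}$ and hence $\bar u(x,t)=k^{\frac{1}{h-1}}u(x,t-\mathcal{T})$.

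First I would compute the boundary values of $\bar u$ on the parabolic boundary of $\Omega\times(0,t)$. Because $g$ is $t$-independent, $u=g$ on the whole parabolic boundary, so $\bar u=k^{\frac{1}{h-1}}g$ there, both on the initial slice and on the lateral boundary. Setting $C=(1-k^{\frac{1}{h-1}})\|g\|_{\infty,\Omega}$, a direct computation gives, on the parabolic boundary,
\[
(\bar u + C)-u=(1-k^{\frac{1}{h-1}})\bigl(\|g\|_{\infty,\Omega}-g\bigr)\ge 0,
\]
using $k^{\frac{1}{h-1}}<1$ and $g\le\|g\|_{\infty,\Omega}$. Thus $u$ is a subsolution and $\bar u+C$ a supersolution of Problem~\eqref{problem} on $\Omega\times(0,t')$ (any $t'>t$) with common boundary data $k^{\frac{1}{h-1}}g+C$, which $u$ meets from below, so Theorem~\ref{pinf}, evaluated at $(x,t)$ via the continuity of $u$, yields
\[
u(x,t)\le k^{\frac{1}{h-1}}u(x,t-\mathcal{T})+(1-k^{\frac{1}{h-1}})\|g\|_{\infty,\Omega}.
\]
Rearranging and invoking the a priori sup-norm bound $\|u\|_\infty\le\|g\|_{\infty,\Omega}$ gives $u(x,t)-u(x,t-\mathcal{T})\le 2\|g\|_{\infty,\Omega}\,(1-k^{\frac{1}{h-1}})$. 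The symmetric comparison $\bar u-C\le u$ produces the reverse inequality, so that $|u(x,t)-u(x,t-\mathcal{T})|\le 2\|g\|_{\infty,\Omega}\,\bigl(1-k^{\frac{1}{h-1}}\bigr)$.

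Finally I would convert this into the stated form with an elementary one-variable estimate. Writing $a=\mathcal{T}/t$ and using
\[
1-(1-a)^{\frac{1}{h-1}}=\int_{1-a}^{1}\tfrac{1}{h-1}\,s^{\frac{1}{h-1}-1}\,ds,
\]
I bound the integrand by its maximum over $[1-a,1]$ and check that this maximum is at most $(1-a)^{\frac{h}{1-h}}$ in both regimes: for $1<h\le 2$ the exponent $\frac{1}{h-1}-1=\frac{2-h}{h-1}\ge 0$, the max is $1\le (1-a)^{\frac{h}{1-h}}$; for $h>2$ the max is $(1-a)^{\frac{2-h}{h-1}}$, and since $\frac{2-h}{h-1}\ge\frac{-h}{h-1}$ with base in $(0,1)$ this is $\le(1-a)^{\frac{h}{1-h}}$. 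This yields $1-k^{\frac{1}{h-1}}\le\frac{1}{h-1}(1-\tfrac{\mathcal{T}}{t})^{\frac{h}{1-h}}\tfrac{\mathcal{T}}{t}$, which combined with the previous bound is exactly the claim.

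The main obstacle is conceptual rather than computational: recognizing that the time-dilated solution $\bar u$ is the right competitor and packaging it, via a single additive constant $C$, so that it dominates $u$ on the parabolic boundary, which is precisely what makes Theorem~\ref{pinf} applicable. Once this setup is in place, the remaining ingredients — the a priori sup-norm bound and the elementary estimate above — are routine, the latter requiring only mild care to treat the ranges $1<h\le 2$ and $h>2$ uniformly.
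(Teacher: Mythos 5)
Your proposal is correct and takes essentially the same route as the proof the paper points to (\cite[pg.~170]{DiB}; cf.\ \cite[Lemma 6.2]{BM}): exploit the time-scaling homogeneity $u\mapsto k^{\frac{1}{h-1}}u(x,kt)$ with $k=1-\mathcal{T}/t$, compare $u$ against the rescaled solution shifted by the constant $(1-k^{\frac{1}{h-1}})\|g\|_{\infty,\Omega}$ via the comparison principle, and finish with the a priori bound $\|u\|_{\infty}\le\|g\|_{\infty,\Omega}$ plus an elementary calculus estimate. All steps check out, including the boundary comparison needed to invoke Theorem~\ref{pinf} and the two-regime ($1<h\le 2$, $h>2$) bound on $1-k^{1/(h-1)}$.
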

\begin{proof} \textbf{[Theorem \ref{final}]} Fix $h>1$. Let $u$ be a viscosity solution of \eqref{finaleqn}.  The results of \cite[Chapter III]{DiB} imply that the family $\{u(\cdot,t): t \in (0,\infty)\}$ is equicontinuous.  Since it is uniformly bounded due to the boundedness of $g$, Arzela-Ascoli's theorem yields that there exists a sequence $t_j \to \infty$ such that $u(\cdot,t_j)$ converge uniformly in $\overline{\Omega}$ to a function $U \in C(\overline{\Omega})$ for which $U(p) = g(p)$ for all $p \in \partial \Omega$.  Since it is known from \cite[Lemma 5.5]{B:MP} that the Dirichlet problem for the subelliptic $\tp$-Laplace equation possesses a unique solution, it is enough to show that $U$ is a viscosity $\tp$-subsolution to  $-\Delta_\tp U = 0$ on $\Omega$.  With that in mind, let $p_0 \in \Omega$ and choose $\phi \in \ccs(\Omega)$ such that  $0 = \phi(p_0) - U(p_0) < \phi(p) - U(p)$ for $p \in \Omega$, $p \neq p_0$.  Using the uniform convergence, we can find a sequence $p_j \to p_0$ such that $u(\cdot,t_j) - \phi$ has a local maximum at $p_j$.  Now define
$$\phi_j(p,t) = \phi(p) + C\left(\frac{t}{t_j}\right)^{\frac{h}{1-h}}\frac{t_j-t}{t_j},$$
where $C = 2||g||_{\infty,\Omega}/(h-1)$.  Note that $\phi_j(p,t) \in \ccs(\Omega \times (0, \infty))$.  Then using Lemma \ref{bounduth},
\begin{eqnarray*}
u(p_j,t_j) - \phi_j(p_j,t_j) &=& u(p_j,t_j) - \phi(p_j) \geq u(p,t_j) - \phi(p)\\
&\geq& u(p,t) - \phi(p) - C\left(\frac{t}{t_j}\right)^{\frac{h}{1-h}}\frac{t_j-t}{t_j}\\
&=& u(p,t) - \phi_j(p,t)
\end{eqnarray*}
for any $p \in \Omega$ and $0<t < t_j$.  Thus we have that $\phi_j$ is an admissible test function at $(p_j,t_j)$ on $\Omega \times [0,T]$.  Therefore, 
$$(\phi_j)_t(p_j,t_j)-\Delta^h_{\infty}\phi_j(p_j,t_j) \leq 0.$$
This yields 
$$- \Delta^h_{\infty}\phi(p_j) \leq \frac{C}{t_j}.$$
The theorem follows by letting $j \to \infty$.
\end{proof}

Combining the results of the previous sections, we have the following theorem:
\begin{thm}
The following diagram commutes: 
$$\begin{CD}
u^{h,t}_t-\Delta^h_{\infty}u^{h,t}=0 @>>{h\to 1^+}>u^{1,t}_t-\Delta^1_{\infty}u^{1,t}=0 \\
@VV{t\to \infty}V                @VV{t\to\infty}V \\
-\Delta^h_{\infty}u^{h,\infty}=0 @>>{h\to 1^+}> -\Delta^1_{\infty}u^{1,\infty}=0
\end{CD}$$
\end{thm}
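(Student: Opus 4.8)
The plan is to read the diagram as the assertion that two composite limits of the top-left object $u^{h,t}$ agree, and to reduce everything to the single arrow that is not yet in hand. First I would observe that the two lower nodes coincide: by Corollary \ref{threeisone} the unique viscosity solution of $-\Delta^h_\infty u=0$ with boundary datum $g$ equals the unique viscosity solution of $-\Delta^1_\infty u=0$ with the same datum, so the bottom horizontal arrow is literally the identity on the common infinity-harmonic function, which I denote $U$. Three of the four arrows are then already established: the top arrow $u^{h,t}\to u^{1,t}$ as $h\to1^+$ is the locally uniform convergence proved in the $h=1$ subsection, the left arrow $u^{h,t}\to U$ as $t\to\infty$ (for $h>1$) is Theorem \ref{final} combined with Corollary \ref{threeisone}, and the bottom arrow is the identity just described. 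Consequently the whole theorem reduces to the \emph{right} vertical arrow, namely that the $h=1$ parabolic solution $u^{1,t}$ of Theorem \ref{oneexist}, which I also write $u^1(p,t)$, satisfies $u^1(\cdot,t)\to U$ uniformly on $\Omega$ as $t\to\infty$.

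The downward-then-rightward path is immediate: applying Theorem \ref{final} to each fixed $h>1$ sends $u^{h,t}$ to $U$, Corollary \ref{threeisone} shows this $U$ is independent of $h$, and composing with the identity bottom arrow again yields $U$. Thus commutativity is equivalent to showing that the rightward-then-downward path also terminates at $U$, i.e.\ that $\lim_{t\to\infty}u^1(\cdot,t)=U$.

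To prove the right arrow I would mimic the scheme of Theorem \ref{final} at $h=1$. The family $\{u^1(\cdot,t):t>0\}$ is uniformly bounded by $\|g\|_\infty$, so once equicontinuity is in hand, Arzel\`a--Ascoli produces, along any sequence $t_j\to\infty$, a uniform subsequential limit $U_1\in C(\overline{\Omega})$ with $U_1=g$ on $\partial\Omega$; here the positive geometric density of $\partial\Omega$ guarantees the boundary datum is attained. One then shows $U_1$ is a viscosity solution of $-\Delta^1_\infty U_1=0$: given $\phi\in\ccs(\Omega)$ touching $U_1$ from above at an interior $p_0$, uniform convergence yields $p_j\to p_0$ at which $u^1(\cdot,t_j)-\phi$ has a local maximum, and one lifts $\phi$ to an admissible parabolic test function $\phi_j(p,t)=\phi(p)+c_j(t)$ by adding a time correction $c_j$ dominating the temporal oscillation of $u^1$ near $t_j$. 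Feeding $\phi_j$ into the parabolic inequality and letting $j\to\infty$ forces $-\Delta^1_\infty\phi(p_0)\le0$, so $U_1$ is $\infty$-subharmonic; the supersolution case is symmetric. By the uniqueness theorem for $-\Delta^1_\infty u=0$ (equivalently Corollary \ref{threeisone}), every subsequential limit equals $U$, whence the full convergence $u^1(\cdot,t)\to U$ follows.

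The hard part is the construction of the time correction $c_j$, equivalently a temporal modulus-of-continuity estimate for $u^1$. For $h>1$ this is exactly where Lemma \ref{bounduth} entered, and the admissible correction $C(t/t_j)^{\frac{h}{1-h}}(t_j-t)/t_j$ was read off from the $h$-homogeneity $k^{\frac{1}{h-1}}u(x,kt)$ of the equation. At $h=1$ this scaling degenerates: the exponent $\frac{1}{h-1}$ diverges and the bound of Lemma \ref{bounduth} blows up as $h\to1^+$, so the correction cannot be imported, nor can a $u^1$ oscillation bound be obtained by passing Lemma \ref{bounduth} to the limit. I expect this to be the main obstacle. I would circumvent it by supplying an independent time-regularity estimate for the normalized ($h=1$) equation, either from the interior parabolic regularity theory (as in \cite[Chapter III]{DiB}, which already furnished the spatial equicontinuity used above) or by a controlled diagonal passage $h=h(j)\downarrow1$ along which the factor $\frac{1}{h-1}(1-\mathcal{T}/t)^{\frac{h}{1-h}}$ is kept bounded on the shrinking time windows relevant near each $t_j$, using the locally uniform convergence $u^{h,t}\to u^{1,t}$ to transfer the resulting correction to the limit. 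With such an estimate the argument closes, the right arrow is established, and both composite limits equal $U$, so the diagram commutes.
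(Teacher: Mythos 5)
Your reduction of the diagram is sound and matches the paper's reading of it: by Corollary \ref{threeisone} the bottom arrow is the identity on the common infinity-harmonic function $U$, the top arrow is the locally uniform convergence $u^{h,t}\to u^{1,t}$ from the $h=1$ existence section, the left arrow is Theorem \ref{final}, and everything hinges on the right vertical arrow $\lim_{t\to\infty}u^{1,t}=U$. But your proof of that arrow does not close, and you say so yourself. Replaying the proof of Theorem \ref{final} at $h=1$ needs a temporal modulus of continuity for $u^1$, i.e.\ an analogue of Lemma \ref{bounduth}, and that lemma degenerates as $h\to 1^+$: both the constant $\frac{2\|g\|_{\infty,\Omega}}{h-1}$ and the exponent $\frac{h}{1-h}$ blow up, because the underlying scaling $k^{\frac{1}{h-1}}u(x,kt)$ does. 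Neither of your two proposed repairs is carried out: the regularity theory of \cite{DiB} is built for divergence-form degenerate equations of $\tp$-Laplacian type, and it is not clear it yields time regularity for the normalized operator $\mathcal{F}$; and the ``controlled diagonal passage'' $h=h(j)\downarrow 1$ is only named, not constructed --- producing it would require exactly the quantitative control (uniformity in $h$ near $1$, or a coupling of $h(j)$ to $t_j$) that is missing. As written, the right arrow is asserted, not proved.

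The idea your proposal is missing is that the paper never proves the right arrow by a PDE argument at all. It observes that the top, bottom and left limits exist, with the left limit uniform, and then invokes the classical iterated-limits (Moore--Osgood) theorem \cite{Ba}: under these hypotheses the fourth limit and the joint limit automatically exist and
$$\lim_{\stackrel{h \to 1^+}{t\to\infty}}u^{h,t}=\lim_{h \to 1^+}\lim_{t\to\infty}u^{h,t}=\lim_{t\to\infty}\lim_{h \to 1^+}u^{h,t}=u^{1,\infty}.$$
So the right vertical arrow comes for free from abstract real analysis, with no $h=1$ temporal estimate needed. If you want to salvage your direct approach you must actually prove a time-regularity estimate for $u^1$ (or quantify in $h$ the uniformity of the convergence $u^{h,t}\to U$), but the route the paper takes is both shorter and avoids precisely the obstacle on which your argument founders.
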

\begin{proof}
By Theorem \ref{finaleqn}, Corollary \ref{threeisone}, and Theorem \ref{oneexist}, the top, bottom and left limits exist, with the left limit being a uniform limit. By results of iterated limits (see, for example, \cite{Ba}), we have the fourth limit exists, as does the full limit. In particular, 
$$\lim_{\stackrel{h \to 1^+}{t\to\infty}}u^{h,t}=\lim_{h \to 1^+}\lim_{t\to\infty}u^{h,t}=\lim_{t\to\infty}\lim_{h \to 1^+}u^{h,t}=u^{1,\infty}$$

\end{proof}

\end{document}